\newcommand{\RWpictureA}{
  \begin{tikzpicture}
    \coordinate (t1) at (0,0) node at (t1) {$\times$} node at (t1) [below] {$t_1$};
    \node (t1+1) at (6,0) {$\times$} node at (t1+1) [below] {$t_1\!+\!1$};
    \node (t1+t) at (1,4) {$\times$} node at (t1+t) [above] {$t_1\!+\!\tau$};
    \node (t1+1t) at (7,4) {$\times$} node at (t1+1t) [above] {$t_1\!+\!1\!+\!\tau$};
    \coordinate (P0) at (1,0) node at (P0) [below right] {$P_0$};
    \coordinate (P1) at (0.25,1) node at (P1) [above left] {$P_1$};
    \coordinate (P2) at (-1,0) node at (P2) [left] {$P_2$};
    \coordinate (P3) at (-0.25,-1) node at (P3) [below] {$P_3$};
    \fill (P0) circle (0.1) (P1) circle (0.1) (P2) circle (0.1) (P3) circle (0.1);
    \coordinate (P01) at (7,0) coordinate (P0t) at (2,4) coordinate (P01t) at (8,4)
      coordinate (P11) at (6.25,1) coordinate (P1t) at (1.25,5) coordinate (P11t) at (7.25,5)
      coordinate (P21) at (5,0) coordinate (P2t) at (0,4) coordinate (P21t) at (6,4)
      coordinate (P31) at (5.75,-1) coordinate (P3t) at (0.75,3) coordinate (P31t) at (6.75,3);
    \fill (P01) circle (0.07) (P0t) circle (0.07) (P01t) circle (0.07)
      (P11) circle (0.07) (P1t) circle (0.07) (P11t) circle (0.07)
      (P21) circle (0.07) (P2t) circle (0.07) (P21t) circle (0.07)
      (P31) circle (0.07) (P3t) circle (0.07) (P31t) circle (0.07);
    \draw (P0) -- (P21) node [pos=0.5] {$>$} 
      (P1) -- (P3t) node [pos=0.5,sloped] {$>$}
      (P0t) -- (P21t) node [pos=0.5] {$>$}
      (P11) -- (P31t) node [pos=0.5,sloped] {$>$};
    \draw [dotted] (t1) -- (2,1.5) node (t2) {$\times$} 
      -- (3,2.1) node (t3) {$\times$} -- (3.5,2.4) node (t4) {$\times$} -- (4.5,3) node (tn) {$\times$};
    \node at (t2) [below] {$t_2$} node at (tn) [below] {$t_n$};
    \draw (P0) arc (0:65:1) node [pos=0.5,sloped] {$<$};
    \draw (P1) arc (76:180:1) node [pos=0.5,sloped] {$<$};
    \draw (P2) arc (180:256:1) node [pos=0.5,sloped] {$>$};
    \draw (P3) arc (256:360:1) node [pos=0.5,sloped] {$>$};
    \draw (P01) arc (0:65:1) node [pos=0.5,sloped] {$<$}
        (P11) arc (76:180:1) node [pos=0.5,sloped] {$<$}
        (P21) arc (180:256:1) node [pos=0.5,sloped] {$>$}
        (P31) arc (256:360:1) node [pos=0.5,sloped] {$>$}
      (P0t) arc (0:65:1) node [pos=0.5,sloped] {$<$}
        (P1t) arc (76:180:1) node [pos=0.5,sloped] {$<$}
        (P2t) arc (180:256:1) node [pos=0.5,sloped] {$>$}
        (P3t) arc (256:360:1) node [pos=0.5,sloped] {$>$}
      (P01t) arc (0:65:1) node [pos=0.5,sloped] {$<$}
        (P11t) arc (76:180:1) node [pos=0.5,sloped] {$<$}
        (P21t) arc (180:256:1) node [pos=0.5,sloped] {$>$}
        (P31t) arc (256:360:1) node [pos=0.5,sloped] {$>$};
    \coordinate (P02) at (2.5,1.5) node at (P02) [right] {$P_0^{(2)}$}; 
    \coordinate (P0n) at (5,3) node at (P0n) [right] {$P_0^{(n)}$}; 
    \draw (P02) arc (0:345:0.5) node [pos=0.25,sloped] {$<$}; 
    \draw (P0n) arc (0:345:0.5) node [pos=0.25,sloped] {$<$}; 
    \fill (P02) circle (0.07) (P0n) circle (0.07);
    \draw (P0) to [out=30, in=290] node [pos=0.5,sloped] {$>$} (P02)
      (P0) to [out=10, in=290] node [pos=0.5,sloped] {$>$} (P0n);
    \draw (3.5,0) node [below] {$l_0$}
      (0.5,2.3) node [left] {$l_{\infty}$}
      (1,0.8) node {$m_0$} (7,0.8) node {$m_0$} (2,4.8) node {$m_0$} (8,4.8) node {$m_0$} 
      (-1.1,0.8) node {$m_1$} (4.9,0.8) node {$m_1$} (-0.1,4.8) node {$m_1$} (5.9,4.8) node {$m_1$} 
      (-1,-0.9) node {$m_2$} (5,-0.9) node {$m_2$} (0,3.1) node {$m_2$} (5.8,3.5) node {$m_2$} 
      (1,-0.9) node {$m_3$} (7.1,-0.8) node {$m_3$} (2.1,3.2) node {$m_3$} (8.1,3.2) node {$m_3$} 
      (1.6,0.6) node {$l_2$} (4.2,1.5) node {$l_n$}
      (1.8,2) node [above] {$s_2$} (4.2,3.4) node [left] {$s_n$};
  \end{tikzpicture}
}
\newcommand{\RWpictureB}{
  \begin{tikzpicture}
    \draw (0,0) -- (6,0) -- (7,4) -- (1,4) -- cycle;
    \draw [dotted] (1,0.7) node (t1) {$\times$} -- (1.5,1) node (t2) {$\times$} 
      -- (3,2) node (tj) {$\times$} -- (4.5,2.6) node (tn-1) {$\times$} -- (5.5,3) node (tn) {$\times$};
    \draw (t1) node [below] {$t_1$} (tj) node [below] {$t_j$} (tn) node [below] {$t_n$};
    \coordinate (P0) at (4,2) node at (P0) [below right] {$P_0^{(j)}$}
      coordinate (P1) at (3.25,3) node at (P1) [below] {$P_1^{(j)}~$}
      coordinate (P2) at (2,2) node at (P2) [above left] {$P_2^{(j)}$}
      coordinate (P3) at (2.75,1) node at (P3) [below right] {$P_3^{(j)}$};
    \fill (P0) circle (0.05) (P1) circle (0.05) (P2) circle (0.05) (P3) circle (0.05);
    \draw (P0) arc (0:65:1) node [pos=0.5,sloped] {$<$}
      (P1) arc (76:180:1) node [pos=0.5,sloped] {$<$}
      (P2) arc (180:256:1) node [pos=0.5,sloped] {$>$}
      (P3) arc (256:360:1) node [pos=0.5,sloped] {$>$};
    \draw[dashed] (P0) -- (6.5,2) node [pos=0.7,sloped] {$>$}
      (P2) -- (0.5,2) node [pos=0.75,sloped] {$>$};
    \draw[dash dot] (P1) -- (3.5,4) node [pos=0.5,sloped] {$>$}
      (P3) -- (2.5,0) node [pos=0.7,sloped] {$>$};
    \draw (5.8,2) node [below] {$l_0^{(j)}$}
      (4.2,3.7) node [left] {$l_{\infty}^{(j)}$}
      (4,3) node {$m_0^{(j)}$} (2,3) node {$m_1^{(j)}$} (2.1,0.9) node {$m_2^{(j)}$} (4,1) node {$m_3^{(j)}$};
    \coordinate (P0r) at (9,3.5) node at (P0r) [left] {$P_0^{(j)}$}
      coordinate (P1r) at (10,0.5) node at (P1r) [left] {$P_1^{(j)}$}
      coordinate (P2r) at (11,3.5) node at (P2r) [right] {$P_2^{(j)}$}
      coordinate (P3r) at (11,2.5) node at (P3r) [right] {$P_3^{(j)}$};
    \fill (P0r) circle (0.05) (P1r) circle (0.05) (P2r) circle (0.05) (P3r) circle (0.05);
    \draw[dashed] (P0r) -- (P2r) node [pos=0.5,sloped] {$>$};
    \draw[dash dot] (P1r) -- (P3r) node [pos=0.5,sloped] {$>$};
    \draw (10,3) node {$l_0^{(j)}$} (11,1.5) node {$l_{\infty}^{(j)}$};
  \end{tikzpicture}
}
\newcommand{\RWpictureC}{
  \begin{tikzpicture}
    \draw (0,0) -- (6,0) -- (7,4) -- (1,4) -- cycle;
    \draw [dotted] (1,0.7) node (t1) {$\times$} -- (1.7,1.1) node (t2) {$\times$} 
      -- (3,2) node (tj) {$\times$} -- (4.5,2.6) node (tn-1) {$\times$} -- (5.5,3) node (tn) {$\times$};
    \draw (t1) node [below] {$t_1$} (tj) node [below] {$t_j$} (tn) node [below] {$t_n$};
    \coordinate (P0j) at (3.4,2.4)
      coordinate (P0n) at (6,3) node at (P0n) [above right] {$P_0^{(n)}$};
    \fill (P0j) circle (0.05) (P0n) circle (0.05);
    \draw (P0j) arc (45:390:0.55) node [pos=0.25,sloped] {$<$}
      (P0n) arc (0:345:0.5) node [pos=0.25,sloped] {$<$}; 
    \coordinate (ljn) at (5.5,3.8);
    \draw (P0n) to [out=80, in=0] (ljn)
      (ljn) to [out=180, in=40] node [pos=0.5,sloped] {$<$} (P0j);
    \draw (4,3.4) node {$l_j^{(n)}$}
      (5.9,2.3) node {$s_n$}
      (4.2,1.2) node {$e^{2\pi \sqrt{-1}(c_{j+1}+\cdots +c_n)}s_j$};
  \end{tikzpicture}
}
\newcommand{\RWpictureD}{
  \begin{tikzpicture}
    \draw (0,0) -- (6,0) -- (7,4) -- (1,4) -- cycle;
    \coordinate (t1) at (1,0.7) node at (t1) {$\times$}
      coordinate (t2) at (1.7,1.1) node at (t2) {$\times$}
      coordinate (tp) at (2.4,1.5) %node at (tp) {$\times$}
      coordinate (tq-1) at (3,2) node at (tq-1) {$\times$}
      coordinate (tq) at (3.7,2.3) node at (tq) {$\times$}
      coordinate (tn-1) at (4.5,2.6) node at (tn-1) {$\times$}
      coordinate (tn) at (5.5,3) node at (tn) {$\times$};
    \fill (tp) circle (0.05);
    \draw (t1) node [below] {$t_1^{\circ}$} (tp) node [below] {$t_p^{\circ}$} 
      (tq) node [below] {$t_q^{\circ}$} (tn) node [below] {$t_n^{\circ}$};
    \coordinate (l1) at (4,1.5) 
      coordinate (l2) at (4,2.6)
      coordinate (l3) at (3.5,2.6)
      coordinate (l4) at (3.3,1.9); 
    \draw (tp) to [out=0, in=200] (l1)
      (l1) to [out=20, in=300] node [pos=0.4,sloped] {$>$} (l2)
      (l2) to [out=120, in=60] (l3)
      (l3) to [out=240, in=90] node [pos=0.2,sloped] {$<$} (l4)
      (l4) to [out=270, in=30] (tp);
    \draw (l1) node [below right] {$\ell_{pq}$};
  \end{tikzpicture}
}
\newcommand{\RWpictureE}{
  \begin{tikzpicture}
    \draw (0,0) -- (6,0) -- (7,4) -- (1,4) -- cycle;
    \draw [dotted] (1.5,1) node (t1) {$\times$} -- (3,2) node (t2) {$\times$} 
      -- (4,2.4) node (tj) {$\times$} -- (4.75,2.7) node (tn-1) {$\times$} -- (5.5,3) node (tn) {$\times$};
    \draw (t1) node [below] {$t_1$} (t2) node [below] {$t_2$} (tn) node [below] {$t_n$};
    \coordinate (P01) at (3.6,2) coordinate (P21) at (2.4,2) 
      coordinate (11) at (3,1) coordinate (12) at (1.5,1.3) coordinate (13) at (1,1)
      coordinate (14) at (1.5,0.5) coordinate (15) at (3,0.8) coordinate (16) at (5,2);
    \draw (P01) arc (0:360:0.6) node [pos=0.35,sloped] {$<$}
      (P01) to [out=0, in=0] (11)
      (11) to [out=180, in=0] node [pos=0.5,sloped] {$<$} (12)
      (12) to [out=180, in=90] (13)
      (13) to [out=270, in=180] (14)
      (14) to [out=0, in=180] node [pos=0.3,sloped] {$>$} (15)
      (15) to [out=0, in=180] (16)
      (16) -- (6.5,2) node [pos=0.5,sloped] {$>$} 
      (0.5,2) -- (P21) node [pos=0.4,sloped] {$>$};
    \coordinate (P12) at (3.125,2.5) coordinate (P32) at (2.875,1.5)
      coordinate (21) at (2.625,0.5) coordinate (22) at (2.3,0.4) coordinate (23) at (1.5,0.25)
      coordinate (24) at (0.7,1) coordinate (25) at (1.5,1.5);
    \draw[dashed] (P12) arc (76:436:0.5) node [pos=0.7,sloped] {$>$}
      (P12) -- (3.5,4) node [pos=0.5,sloped] {$>$} 
      (2.5,0) -- (21) node [pos=0.5,sloped] {$>$} 
      (21) to [out=90, in=60] (22)
      (22) to [out=240, in=0] (23)
      (23) to [out=180, in=270] (24)
      (24) to [out=90, in=180] node [pos=0.5,sloped] {$>$} (25)
      (25) to [out=0, in=270] (P32);
    \draw (5,1.6) node {$\gamma_{20}^{(1n)}$}
      (2.8,3.4) node {$\gamma_{2\infty}^{(1n)}$};
  \end{tikzpicture}
}
\newcommand{\RWpictureF}{
  \begin{tikzpicture}
    \coordinate (0) at (0,0) coordinate (1) at (4.5,0) coordinate (t) at (0.75,3) coordinate (1+t) at (5.25,3)
      coordinate (2) at (9,0) coordinate (2+t) at (9.75,3) coordinate (2t) at (1.5,6) coordinate (1+2t) at (6,6);
    \draw (0) -- (1) -- (1+t) -- (t) -- cycle;
    \draw [dotted] (1) -- (2) -- (2+t) -- (1+t) 
      (t) -- (2t) -- (1+2t) -- (1+t);
    \coordinate (t1) at (1,0.6) node at (t1) {$\times$}
      coordinate (t2) at (1.75,1.05) node at (t2) {$\times$}
      coordinate (tp) at (2.5,1.5) %node at (tp) {$\times$}
      coordinate (tn-1) at (3.25,1.95) node at (tn-1) {$\times$}
      coordinate (tn) at (4,2.4) node at (tn) {$\times$}
      coordinate (t1+1) at (5.5,0.6) node at (t1+1) {$\times$}
      coordinate (t2+1) at (6.25,1.05) node at (t2+1) {$\times$}
      coordinate (tp+1) at (7,1.5) %node at (tp+1) {$\times$}
      coordinate (tn-1+1) at (7.75,1.95) node at (tn-1+1) {$\times$}
      coordinate (tn+1) at (8.5,2.4) node at (tn+1) {$\times$}
      coordinate (t1+t) at (1.75,3.6) node at (t1+t) {$\times$}
      coordinate (t2+t) at (2.5,4.05) node at (t2+t) {$\times$}
      coordinate (tp+t) at (3.25,4.5) %node at (tp+t) {$\times$}
      coordinate (tn-1+t) at (4,4.95) node at (tn-1+t) {$\times$}
      coordinate (tn+t) at (4.75,5.4) node at (tn+t) {$\times$};
    \fill (tp) circle (0.05) (tp+1) circle (0.05) (tp+t) circle (0.05);
    \draw (t1) node [below] {$t_1^{\circ}$} (tp) node [below] {$t_p^{\circ}$} (tn) node [below] {$t_n^{\circ}$}
      (t1+1) node [below right] {$t_1^{\circ}+1$} (tp+1) node [below right] {$t_p^{\circ}+1$} 
      (tn+1) node [below right] {$t_n^{\circ}+1$}
      (t1+t) node [above left] {$t_1^{\circ}+\tau$} (tp+t) node [above] {$t_p^{\circ}+\tau~~$} 
      (tn+t) node [above] {$t_n^{\circ}+\tau$};
    \draw[thick] (tp) -- (tp+1) node [pos=0.4,sloped] {$>$};
    \draw[thick] (tp) -- (tp+t) node [pos=0.4,sloped] {$>$};
    \draw (4,1.2) node {$\ell_{p0}$}
      (2.3,2.4) node {$\ell_{p\infty}$};
  \end{tikzpicture}
}
\newcommand{\RWpictureH}{
  \begin{tikzpicture}
    \draw (0,0) -- (6,0) -- (7,4) -- (1,4) -- cycle;
    \draw [dotted] (1,1) node (t1) {$\times$} -- (2.5,1.5) node (t2) {$\times$} 
      -- (4,2) node (t3) {$\times$}  -- (6,3) node (tn) {$\times$};
    \draw (t1) node [below] {$t_1$} (t2) node [below] {$t_2$} (t3) node [below] {$t_3$} (tn) node [below] {$t_n$};
    \coordinate (P10) at (1.5,1) coordinate (P20) at (3,1) coordinate (12) at (2.5,0.5);
    \draw[thick] (P10) arc (0:360:0.5) node [pos=0.3,sloped] {$<$}
      (P20) arc (-45:315:0.7) node [pos=0.3,sloped] {$<$}
      (P10) to [out=0, in=180] node [pos=0.8,sloped] {$>$} (12)
      (12) to [out=0, in=315] (P20);
    \coordinate (P30) at (4.5,2) coordinate (P20l) at (3,1.5) 
      coordinate (231) at (3.2,1.5) coordinate (232) at (4.2,1.3);
    \draw (P30) arc (0:360:0.5) node [pos=0.3,sloped] {$<$}
      (P20l) arc (0:360:0.5) node [pos=0.4,sloped] {$<$}
      (P20l) to [out=0, in=180] (231)
      (231) to [out=0, in=180] node [pos=0.5,sloped] {$>$} (232)
      (232) to [out=0, in=330] (P30);
    \fill (231) circle (0.07);
    \draw (3,0.4) node {$\gamma_{12}$}
      (4.7,1.3) node {$\gamma_{23}^{\vee}$};
  \end{tikzpicture}
}
\newcommand{\RWpictureI}{
  \begin{tikzpicture}
    \draw (0,0) -- (6,0) -- (7,4) -- (1,4) -- cycle;
    \draw [dotted] (1.5,1.5) node (t1) {$\times$} -- (3,2) node (t2) {$\times$} 
      -- (4,2.4) node (tj) {$\times$} -- (5,2.8) node (tn-1) {$\times$} -- (6,3.2) node (tn) {$\times$};
    \draw (t1) node [below] {$t_1$} (tj) node [below] {$t_j$} (tn) node [below] {$t_n$};
    \coordinate (P01) at (2,1.5) coordinate (P0j) at (4.5,2.4) 
      coordinate (1j) at (4,1.5) 
      coordinate (P012) at (2.2,1.5); 
    \draw[thick] (P01) arc (0:360:0.5) node [pos=0.3,sloped] {$<$}
      (P0j) arc (0:360:0.5) node [pos=0.3,sloped] {$<$} 
      (P01) to [out=0, in=180] (P012)
      (P012) to [out=0, in=180] node [pos=0.5,sloped] {$>$} (1j)
      (1j) to [out=0, in=330] (P0j);
    \draw (P012) arc (0:360:0.7) node [pos=0.5,sloped] {$>$}
      (1.67,2.18) -- (2.125,4) node [pos=0.3,sloped] {$>$}
      (1.125,0) -- (1.33,0.82) node [pos=0.5,sloped] {$>$}; 
    \fill (P012) circle (0.1);
    \draw (4,1.2) node {$\gamma_{1j}$}
      (2.4,3.2) node {$\gamma_{1\infty}^{\vee}$};
  \end{tikzpicture}
}
\newcommand{\RWpictureJ}{
  \begin{tikzpicture}
    \draw (0,0) -- (6,0) -- (7,4) -- (1,4) -- cycle;
    \draw [dotted] (2,1.5) node (t1) {$\times$}
      -- (4,2.4) node (tj) {$\times$} -- (5,2.8) node (tn-1) {$\times$} -- (6,3.2) node (tn) {$\times$};
    \draw (t1) node [below] {$t_1$}  (tn) node [below] {$t_n$};
    \coordinate (P0) at (2.7,1.5) coordinate (P2) at (1.3,1.5) coordinate (P1) at (2.17,2.18)
      coordinate (P12) at (2.12,1.985) coordinate (P32) at (1.88,1.015)
      coordinate (int1) at (2.35,2.1) coordinate (int2) at (2,0.8);
    \draw[thick] (P1) arc (76:432:0.7) node [pos=0.4,sloped] {$>$}
      (P0) -- (6.375,1.5) node [pos=0.5,sloped] {$>$}
      (0.375,1.5) -- (P2) node [pos=0.4,sloped] {$>$};
    \draw (P12) arc (76:420:0.5) node [pos=0.2,sloped] {$<$}
      (P12) to [out=20, in=270] (int1)
      (int1) -- (2.8,4) node [pos=0.4,sloped] {$>$}
      (1.8,0) -- (int2) node [pos=0.4,sloped] {$>$}
      (int2) to [out=90, in=256] (P32);
    \fill (int1) circle (0.1);
    \draw [fill=white] (int2) circle (0.1);
    \draw (4,1.2) node {$\gamma_{10}$}
      (2.2,3.2) node {$\gamma_{1\infty}^{\vee}$};
  \end{tikzpicture}
}
\newcommand{\RWpictureK}{
  \begin{tikzpicture}
    \draw (0,0) -- (6,0) -- (7,4) -- (1,4) -- cycle;
    \draw [dotted] (1,0.7) node (t1) {$\times$} -- (1.7,1.1) node (t2) {$\times$} 
      -- (3,2) node (tj) {$\times$} -- (4.5,3) node (tn-1) {$\times$} -- (5.5,3.4) node (tn) {$\times$};
    \draw (t1) node [below] {$t_1$} (tj) node [below] {$t_j$} (tn) node [right] {$t_n$};
    \coordinate (P01) at (3.7,2) coordinate (P21) at (2.3,2) 
      coordinate (P02) at (3.5,2) coordinate (P22) at (2.5,2) 
      coordinate (int1) at (3.66,2.24) coordinate (int2) at (2.34,2.24);
    \draw[thick] (P01) arc (0:360:0.7) node [pos=0.3,sloped] {$<$}
      (P01) -- (6.5,2) node [pos=0.5,sloped] {$>$}
      (0.5,2) -- (P21) node [pos=0.4,sloped] {$>$};
    \draw (P02) arc (0:360:0.5) node [pos=0.2,sloped] {$<$}
      (P02) to [out=40, in=210] (int1)
      (int1) to [out=30, in=180] node [pos=0.4,sloped] {$>$} (6.625,2.5)
      (0.625,2.5) to [out=0, in=150] node [pos=0.5,sloped] {$>$} (int2)
      (int2) to [out=330, in=140] (P22);
    \fill (int1) circle (0.1);
    \draw [fill=white] (int2) circle (0.1);
    \draw (5,1.7) node {$\gamma_{10}$}
      (5.7,2.8) node {$\gamma_{10}^{\vee}$};
  \end{tikzpicture}
}
\newcommand{\RWpictureL}{
  \begin{tikzpicture}
    \draw (0,0) -- (6,0) -- (7,4) -- (1,4) -- cycle;
    \draw [dotted] (1,0.7) node (t1) {$\times$} -- (1.7,1.1) node (t2) {$\times$} 
      -- (3,2) node (tj) {$\times$} -- (4.5,2.6) node (tn-1) {$\times$} -- (5.5,3) node (tn) {$\times$};
    \draw (t1) node [below] {$t_1$} (tj) node [below] {$t_j$} (tn) node [below] {$t_n$};
    \coordinate (P01) at (3.7,2) coordinate (P11) at (3.17,2.68) coordinate (P31) at (2.83,1.32) 
      coordinate (P02) at (3.5,2)  coordinate (P12) at (3.12,2.485) coordinate (P32) at (2.88,1.515) 
      coordinate (int1) at (3.35,2.6) coordinate (int2) at (3,1.3);
    \draw[thick] (P01) arc (0:360:0.7) node [pos=0,sloped] {$>$}
      (P11) -- (3.5,4) node [pos=0.4,sloped] {$>$}
      (2.5,0) -- (P31) node [pos=0.4,sloped] {$>$};
    \draw (P02) arc (0:360:0.5) node [pos=0.4,sloped] {$<$}
      (P12) to [out=60, in=240] (int1)
      (int1) to [out=60, in=256] node [pos=0.4,sloped] {$>$} (4,4)
      (3,0) to [out=76, in=300] node [pos=0.5,sloped] {$>$} (int2)
      (int2) to [out=120, in=300] (P32);
    \fill (int1) circle (0.1);
    \draw [fill=white] (int2) circle (0.1);
    \draw (3,3.5) node {$\gamma_{1\infty}$}
      (4.3,3.5) node {$\gamma_{1\infty}^{\vee}$};
  \end{tikzpicture}
}
\newtheorem{Def}{Definition}[section]
\theoremstyle{remark}
\newtheorem{Rem}[Def]{Remark}
\newtheorem{Ex}[Def]{Example}
\newtheorem*{Ack}{Acknowledgments}
\theoremstyle{plain}
\newtheorem{Th}[Def]{Theorem}
\newtheorem{Prop}[Def]{Proposition}
\newtheorem{Lem}[Def]{Lemma}
\newtheorem{Cor}[Def]{Corollary}
\newtheorem{Fact}[Def]{Fact}
\newcommand{\R}{\mathbb{R}}
\newcommand{\Z}{\mathbb{Z}}
\newcommand{\C}{\mathbb{C}}
\renewcommand{\P}{\mathbb{P}}
\renewcommand{\H}{\mathbb{H}}
\newcommand{\CE}{\mathcal{E}}
\newcommand{\CL}{\mathcal{L}}
\newcommand{\CO}{\mathcal{O}}
\newcommand{\CS}{\mathcal{S}}
\newcommand{\FT}{\mathfrak{T}}
\newcommand{\al}{\alpha }
\newcommand{\ga}{\gamma }
\newcommand{\de}{\delta }
\newcommand{\Ga}{\Gamma }
\newcommand{\vth}{\vartheta }
\newcommand{\vph}{\varphi }
\newcommand{\la}{\lambda }
\newcommand{\La}{\Lambda }
\newcommand{\om}{\omega }
\newcommand{\Om}{\Omega }
\newcommand{\na}{\nabla }
\newcommand{\pa}{\partial }
\newcommand{\ot}{\otimes }
\newcommand{\bu}{\bullet}
\newcommand{\we}{\wedge}
\newcommand{\tpi}{2\pi \sqrt{-1}}
\newcommand{\pii}{\pi \sqrt{-1}}
\newcommand{\frs}{\mathfrak{s}}
\newcommand{\bff}{\mathbf{f}}
\newcommand{\bfv}{\mathbf{v}}
\newcommand{\bft}{\mathbf{t}}
\DeclareMathOperator{\diag}{diag}
\DeclareMathOperator{\reg}{reg}
\DeclareMathOperator{\Res}{Res}
\DeclareMathOperator{\ord}{ord}
\DeclareMathOperator{\id}{id}
\def\tp#1{\mathord{\mathopen{{\vphantom{#1}}^t}#1}} 
\title[Intersection numbers for the Riemann-Wirtinger integral]{
  Intersection numbers of twisted homology and cohomology groups 
  associated to the Riemann-Wirtinger integral
}
\author[Y. Goto]{Yoshiaki Goto}
\address[Goto]{
  General Education,
  Otaru University of Commerce,
  Otaru 047-8501, Japan
}
\email{goto@res.otaru-uc.ac.jp}
\keywords{
Riemann-Wirtinger integral; 
Theta function; 
Twisted homology groups; 
Twisted cohomology groups; 
Intersection numbers.
}
\subjclass[2010]{
33C99,  %None of the above, but in this section
%33C05, % Classical hypergeometric functions, ${}_2F_1$
14K25, % Theta functions and abelian varieties
55N25. % Homology with local coefficients, equivariant cohomology
%14F25. % Classical real and complex (co)homology in algebraic geometry
}
\date{\today}
\begin{document}
\begin{abstract}
  The Riemann-Wirtinger integral is an analogue of the hypergeometric integral, 
  which is defined as an integral 
  on a one-dimensional complex torus. 
  We study the intersection forms on the twisted homology and cohomology groups 
  associated with the Riemann-Wirtinger integral. 
  We derive explicit formulas of some intersection numbers, and 
  apply them to study the monodromy representation, connection problems, and contiguity relations. 
\end{abstract}

\maketitle

\section{Introduction}
The Gauss hypergeometric function ${}_2 F_1 (a,b,c;z)$ has an integral representation
\begin{align*}
  {}_2 F_1 (a,b,c;z)=\frac{\Ga(c)}{\Ga(a) \Ga(c-a)} \int_0^1 u^a (1-u)^{c-a} (1-zu)^{-b} \frac{du}{u(1-u)} .
\end{align*}
This integral can be interpreted as a pairing of a twisted homology class 
and a twisted cohomology class on $\P^1_u -\{ 0,1,1/z,\infty \}$. 
Aomoto (e.g., \cite{AK}) generalized this framework to 
twisted (co)homology theory on $\P^n$ minus some divisors, and it enables us to study 
many type of hypergeometric functions systematically. 
In addition, 
Kita-Yoshida \cite{KY} and Cho-Matsumoto \cite{CM} showed that 
the intersection forms on the twisted (co)homology groups are powerful tool to study hypergeometric functions, 
and gave technique to obtain explicit formulas of the intersection numbers 
for some (co)cycles. 
Thanks to the intersection theory on the twisted (co)homology groups, 
we can find various properties of hypergeometric functions, 
or interpret them in terms of the intersection forms. 

These theories give generalizations from $\P^1$ to $\P^n$. 
On the other hand, we can consider another type of generalization: from $\P^1$ to a Riemann surface 
of genus grater than zero. 
The Riemann-Wirtinger integral gives such a generalization on a one-dimensional complex torus. 

The Riemann-Wirtinger integral \cite{Mano}, \cite{Mano-Watanabe} is defined by 
\begin{align}
  \label{eq:RW-integral}
  \int_{\ga} e^{\tpi c_0 u}\vth_1(u-t_1)^{c_1} \cdots \vth_1(u-t_n)^{c_n} \frs(u-t_j;\la)du ,\\
  \nonumber
  \frs(u;\la) = \frac{\vth_1 (u-\la) \vth_1'(0)}{\vth_1(u)\vth_1(-\la)} ,
\end{align}
where $\ga$ is a twisted cycle, and $c_0\in \C$, $c_1,\dots ,c_n,\la \in \C-\Z$. 
For the theta function $\vth_1$, see \S \ref{subsec:theta-function}.
When $\la \in \Z$, we replace $\frs(u-t_j;\la)$ in (\ref{eq:RW-integral}) by $1$. 
The Wirtinger integral (e.g., \cite{Watanabe-elliptic-homology-cohomology}) is 
obtained as an example by setting $n=4$ and $\la =0$. 
The integrand of (\ref{eq:RW-integral}) can be regarded as a multi-valued function on 
a complex torus minus $n$ points for which we write $M$. 
It defines a local system $\CL_{\la}$ on $M$ and its dual $\CL_{\la}^{\vee}$. 
Thus, we can study the Riemann-Wirtinger integral in terms of 
the twisted cohomology group $H^1 (M;\CL_{\la})$ and the twisted homology group $H_1 (M;\CL_{\la}^{\vee})$. 
The structures of these groups are precisely studied in \cite{Mano-Watanabe}. 
Based on the results of \cite{Mano-Watanabe}, we study the intersection theory on 
these twisted homology and cohomology groups. 

In this paper, we compute the intersection numbers for various types of twisted (co)cycles. 
Moreover, we show that the intersection form on the twisted homology (resp. cohomology) group 
is useful to study the monodromy or connection problems (resp. contiguity relations) 
which are considered in \cite{Mano}. 
We can reduce these problems to studying certain linear operators on twisted (co)homology groups. 
% We focus on twisted (co)cycles whose changes by an operator corresponding to a monodromy, connection, or contiguity 
We focus on twisted (co)cycles whose changes by a corresponding operator
are easily described. 
Then this operator can be expressed by using the intersection numbers with such (co)cycles. 
An advantage of our expressions is that they do not depend on the choice of a basis. 
By our expressions, we can recover the connection and contiguity matrices given in \cite{Mano}. 
Our approaches are analogies of \cite{M-FD} and \cite{GM-Pfaffian-contiguity}. 
The idea of using the intersection form to study the connection problems 
is also provided in \cite{Mimachi2011},  
which is slightly different from ours. 

In fact, some intersection numbers for twisted homology groups are computed in \cite{Ghazouani-Pirio}. 
Though we compute the intersection numbers of more cycles than \cite{Ghazouani-Pirio}, 
our results for the intersection numbers themselves are not essentially new. 
However, 
we show utility of the intersection form to study properties of the Riemann-Wirtinger integral, 
which is a different view point from \cite{Ghazouani-Pirio}. 
Further, 
we believe that it is the first time to study the intersection theory of twisted cohomology groups 
for our settings. 

This paper is arranged as follows. 
In Section \ref{sec:preliminaries}, 
we review basic properties of the theta function, 
and results of \cite{Mano-Watanabe} for the twisted homology and cohomology groups 
associated with the Riemann-Wirtinger integral. 
In Section \ref{sec:homology-intersection}, 
we give the intersection numbers of some twisted cycles, and apply them to study of 
the monodromy and connection problems. 
In Section \ref{sec:cohomology-intersection}, 
we give the intersection numbers of some twisted cocycles, and apply them to study of 
the contiguity relations. 
Precise computation of these intersection numbers are given in Section \ref{sec:computation}.

\section{Preliminaries}\label{sec:preliminaries}
In this section, we review basic facts about a theta function and twisted homology and cohomology groups,
which we will use throughout this paper. 

\subsection{Theta function}\label{subsec:theta-function}
We define a theta function 
\begin{align*}
  \vth_1 (u)=\vth_1 (u,\tau)
  =-\sum_{m\in \Z} \exp\left(\pi \sqrt{-1} \Big( m+\frac{1}{2} \Big)^2 \tau
  +\tpi \Big( m+\frac{1}{2} \Big)\Big( u+\frac{1}{2} \Big)\right), 
\end{align*}
where $z\in \C$ and $\tau \in \H$. 
In this paper, we fix $\tau \in \H$ and we frequently use the notation $\vth_1 (u)$. 
We note that $\vth_{1}(u,\tau)$ defined here is equal to $-\vth_{11}(u,\tau)$
which is introduced in \cite{Mumford}. 
It is well-known that $\vth_1 (u)$ is an odd function, has a simple zero at $u=0$, and 
has the quasi-periodicity 
\begin{align*}
  \vth_1(u+1)=-\vth_1(u) =e^{\pii}\vth_1(u),\quad 
  \vth_1(u+\tau)=-e^{-\pii (\tau+2u)}\vth_1(u)=e^{-\pii (\tau+2u+1)}\vth_1(u).
\end{align*}
We also introduce the following two functions: 
\begin{align*}
  \rho (u)=\frac{\vth_1'(u)}{\vth_1 (u)} ,\qquad 
  \frs (u;\la) =\frac{\vth_1 (u-\la) \vth_1'(0)}{\vth_1(u)\vth_1(-\la)} ,
\end{align*}
where $\vth_1'(u)=\frac{d}{du}\vth_1(u)$ and $\la \in \C-\Z$. 
Note that $\rho (u)$ is an odd function, and
$\frs (u;\la)$ has the quasi-periodicity
\begin{align*}
  \frs (u+1;\la) =\frs (u;\la) ,\quad 
  \frs (u+\tau;\la) =e^{\tpi \la} \frs (u;\la) .
\end{align*}
As a function in $\la$, the Laurent expansion of $\frs (u;\la)$ at $\la=0$ is given as
\begin{align}
  \label{eq:frs-laurent}
  \frs (u;\la) =-\frac{1}{\la} +\rho(u) +\cdots .
\end{align}
The following formulas are not difficult, and can be regarded as analogies of 
the pole-zero cancellation  
and the partial fraction decomposition in the rational functions field. 
\begin{Lem}[cf. {\cite[(38),(45)]{Mano}}]
  Suppose that $t_j,t_k,t_l \in \C$ are distinct points of $\C/(\Z +\Z \tau)$. 
  We have 
  \begin{align}
    \label{eq:theta-rel-1}
    &\frs(u-t_k;\la)\big( \rho(u-t_j)+\rho(t_j-t_k)-\rho(u-t_k-\la)-\rho(\la) \big)
      =\frs(u-t_j;\la)\frs(t_j-t_k;\la), \\
    \label{eq:theta-rel-2}
    &\frac{\vth_1(u-t_k)}{\vth_1(u-t_l)}\frs(u-t_j;\la-t_k+t_l)
      =\frac{\vth_1(t_j-t_k)}{\vth_1(t_j-t_l)}\frs(u-t_j;\la)
      +\frac{\vth_1(t_k-t_l)\vth_1(\la-t_k+t_j)}{\vth_1(t_j-t_l)\vth_1(\la-t_k+t_l)}\frs(u-t_l;\la).
  \end{align}
\end{Lem}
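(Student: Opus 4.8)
The plan is to prove both identities by the classical Liouville argument on the complex torus $\C/(\Z+\Z\tau)$: fixing the parameters $t_j,t_k,t_l,\la$ and viewing each side as a meromorphic function of $u$, I would show that the two sides share the same quasi-periodicity and the same principal parts, so that their difference is a holomorphic function carrying a nontrivial multiplier and must therefore vanish identically.

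First I would record the behaviour of the building blocks under $u\mapsto u+1$ and $u\mapsto u+\tau$. From the quasi-periodicity of $\vth_1$ one gets $\rho(u+1)=\rho(u)$ and $\rho(u+\tau)=\rho(u)-\tpi$, while $\frs(u+1;\la)=\frs(u;\la)$ and $\frs(u+\tau;\la)=e^{\tpi\la}\frs(u;\la)$ are already given. In (\ref{eq:theta-rel-1}) the bracket $\rho(u-t_j)+\rho(t_j-t_k)-\rho(u-t_k-\la)-\rho(\la)$ is invariant under both shifts (the two occurrences of $\pm\tpi$ cancel), so each side is $1$-periodic and picks up the factor $e^{\tpi\la}$ under $u\mapsto u+\tau$. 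In (\ref{eq:theta-rel-2}) the factor $\vth_1(u-t_k)/\vth_1(u-t_l)$ contributes $e^{\tpi(t_k-t_l)}$ under $u\mapsto u+\tau$, which combines with the $e^{\tpi(\la-t_k+t_l)}$ coming from $\frs(u-t_j;\la-t_k+t_l)$ to give again the common factor $e^{\tpi\la}$, matching the right-hand side.

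Next I would match the principal parts in a fundamental domain, using $\vth_1(w)\sim\vth_1'(0)\,w$ near $w=0$, which shows that $\frs(u-t_j;\la)$ and $\rho(u-t_j)$ both have residue $1$ at $u=t_j$, while $\frs(u-t_k;\la)$ has a simple zero at $u=t_k+\la$. For (\ref{eq:theta-rel-1}) the apparent pole of $\frs(u-t_k;\la)$ at $u=t_k$ is removed because the bracket vanishes there — at $u=t_k$ it equals $\rho(t_k-t_j)+\rho(t_j-t_k)-\rho(-\la)-\rho(\la)=0$ by oddness of $\rho$ — and the pole of $-\rho(u-t_k-\la)$ at $u=t_k+\la$ is cancelled by that zero; the only surviving pole is at $u=t_j$, with residue $\frs(t_j-t_k;\la)$ on each side. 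For (\ref{eq:theta-rel-2}) the poles sit at $u=t_j$ and $u=t_l$: the residue at $u=t_j$ is $\vth_1(t_j-t_k)/\vth_1(t_j-t_l)$ on both sides, and the residues at $u=t_l$ agree after computing $\frs(t_l-t_j;\la-t_k+t_l)$ and applying the oddness of $\vth_1$ to reconcile the two forms of the coefficient.

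Finally, the difference $f(u)$ of the two sides of either identity is then holomorphic on $\C$ with $f(u+1)=f(u)$ and $f(u+\tau)=e^{\tpi\la}f(u)$. If $f\not\equiv0$, the argument principle on a fundamental domain shows $f$ has no zeros, so $f'/f$ is holomorphic and doubly periodic, hence a constant $C$; then $f(u)=f(0)e^{Cu}$, and the periodicity forces $C\in\tpi\Z$ and $\la\in\Z+\Z\tau$, contradicting the well-definedness of $\frs(\,\cdot\,;\la)$ (which requires $\vth_1(-\la)\ne0$). Thus $f\equiv0$ and both identities follow. I expect the only delicate step to be the pole bookkeeping of the previous paragraph — in particular verifying the two cancellations in (\ref{eq:theta-rel-1}) and reconciling the two forms of the residue at $u=t_l$ in (\ref{eq:theta-rel-2}) via the oddness of $\vth_1$; the quasi-periodicity check and the concluding Liouville argument are routine.
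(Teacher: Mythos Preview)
Your Liouville argument is correct: the quasi-periodicity checks, the pole/residue computations (including the two cancellations in (\ref{eq:theta-rel-1}) and the oddness reconciliation at $u=t_l$ in (\ref{eq:theta-rel-2})), and the concluding step that a nonzero holomorphic $f$ with multipliers $(1,e^{\tpi\la})$ forces $\la\in\La_{\tau}$ all go through as you describe.

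For comparison, the paper does not actually prove this lemma: it simply states that the formulas ``are not difficult'' and cites \cite[(38),(45)]{Mano}, framing them as theta-function analogues of pole--zero cancellation and partial fraction decomposition. Your approach is the standard one for establishing such identities and is precisely what ``not difficult'' means here; you have supplied the details the paper chose to omit.
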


\subsection{Twisted homology and cohomology groups}\label{subsec:homology-cohomology}
For a fixed $\tau \in \H$, we set $\La_{\tau}=\Z +\Z \tau$ and $E=\C/\La_{\tau}$. 
For $\la \in \C$, we can define a one-dimensional representation $e_{\la} :\pi_1 (E) \simeq \La_{\tau} \to \C^{*}$ of 
the fundamental group $\pi_1 (E)$
by the correspondence $e_{\la}(1)=1$, $e_{\la}(\tau) =e^{\tpi \la}$. 
% since $\pi_1 (E)$ is isomorphic to $\La_{\tau}$. 
Let $R_{\la}$ be the local system on $E$ determined by this representation $e_{\la}$.  
% and we set $\CO_{E,\la}=\CO_E \ot_{\C} R_{\la}$, where $\CO_E$ is the sheaf of holomorphic functions on $E$. 

Let $n\geq 2$ and let $t_1,\dots ,t_n$ be distinct points in $E$. 
We set $D=\{ t_1,\dots ,t_n \}$ and $M=E-D$. 
Let $\CO_E(*D)$ (resp. $\Om^1_E(*D)$) be the sheaf of functions (resp. $1$-forms) 
meromorphic on $E$ and holomorphic on $M$. 
We set $\CO_{\la}(*D)=\CO_E(*D) \ot_{\C} R_{\la}$ and $\Om^1_{\la}(*D)=\Om^1_E(*D) \ot_{\C} R_{\la}$. 
% Let $\CO_M$ be the sheaf of holomorphic functions on $M$ and let $\Om^1_M$ be that of 
% holomorphic $1$-forms. 
% We define sheaves $\CO_{\la}$ and $\Om^1_{\la}$ on $M$ by 
% $\CO_{\la}=\CO_M \ot_{\C} R_{\la}|_{M}$ and $\Om^1_{\la}=\Om^1_M \ot_{\C} R_{\la}|_{M}$. 
We define a multi-valued function $T(u)$ on $M$ by 
\begin{align*}
  T(u)= e^{\tpi c_0 u}\vth_1(u-t_1)^{c_1} \cdots \vth_1(u-t_n)^{c_n} , 
\end{align*}
where $c_0\in \C$, and $c_1,\dots ,c_n\in \C-\Z$ satisfy $c_1+\dots +c_n=0$. 
Let $\CL$ and $\CL^{\vee}$ be the local systems on $M$ defined by $T(u)^{-1}$ and $T(u)$, respectively: 
$\CL=\C T(u)^{-1}$ and $\CL^{\vee}=\C T(u)$. 
We set $\CL_{\la}=\CL \ot_{\C} R_{\la}$ and $\CL_{\la}^{\vee}=\CL^{\vee}\ot_{\C} R_{\la}^{\vee}$. 
Note that if $\la\in \Z$, we have $\CL_{\la}=\CL$ and $\CL_{\la}^{\vee}=\CL^{\vee}$. 
Let us consider the homology group $H_i(M;\CL_{\la}^{\vee})$ and cohomology group $H^i(M;\CL_{\la})$ which are 
called the twisted homology group and twisted cohomology group, respectively. 
Recall that the twisted homology group is defined as 
$H_i(M;\CL_{\la}^{\vee})=Z_i(M;\CL_{\la}^{\vee})/B_i(M;\CL_{\la}^{\vee})$, where $Z_i$ and $B_i$ are 
the kernel and image of the boundary operators for the twisted chains, respectively (e.g., \cite{AK}). 
We set $\om =d\log T(u)\in \Om^1_M(M)$. 

\begin{Fact}[\cite{Mano-Watanabe}]\label{fact:vanish-dim}
  If $i\neq 1$, then $H_i(M;\CL_{\la}^{\vee})=0$ and $H^i(M;\CL_{\la})=0$. We have 
  \begin{align*}
    \dim H_1(M;\CL_{\la}^{\vee}) =\dim H^1(M;\CL_{\la}) =n, \qquad
    H^1(M;\CL_{\la})\simeq \Om^1_{\la}(*D)(E)/\na(\CO_{\la}(*D)(E)) ,
  \end{align*}
  where $\na :\CO_{\la}(*D) \to \Om^1_{\la}(*D)$ is defined by $\na f =df +f\om$. 
\end{Fact}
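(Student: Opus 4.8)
The plan is to split the statement into a purely topological part---the vanishing of $H_i(M;\CL_\la^\vee)$ and $H^i(M;\CL_\la)$ for $i\neq 1$, together with the equality of dimensions---and an analytic part, namely the de Rham description of $H^1$ by meromorphic forms. The two parts are logically independent, and only the second is genuinely hard.

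First I would treat the topology. The space $M=E-\{t_1,\dots,t_n\}$ is an $n$-times punctured torus; excising a small disk around each $t_j$ exhibits $M$ as homotopy equivalent to a compact surface of genus $1$ with $n$ boundary circles, hence to a finite wedge of $n+1$ circles. In particular $M$ has the homotopy type of a one-dimensional CW complex, so $H_i(M;\CL_\la^\vee)=0$ and $H^i(M;\CL_\la)=0$ for every $i\geq 2$ and every local system, and $\chi(M)=-n$. For $i=0$ I would use that $H_0(M;\CL_\la^\vee)$ is the space of coinvariants and $H^0(M;\CL_\la)$ the space of invariants of the monodromy representation; since the local monodromy around $t_j$ equals $e^{\pm\tpi c_j}\neq 1$ (the sign depending on whether one uses $\CL_\la^\vee$ or $\CL_\la$, and the inequality holding because $c_j\in\C-\Z$), there are no nonzero (co)invariants and both groups vanish. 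Finally the Euler characteristic with rank-one local coefficients gives $\sum_i(-1)^i\dim H_i(M;\CL_\la^\vee)=\chi(M)=-n$, and likewise for cohomology, so in each case the only surviving group has dimension $n$.

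For the de Rham isomorphism I would first observe that $\CL_\la$ is the local system of flat sections of the holomorphic connection $\na$, $\na f=df+f\om$ with $\om=d\log T(u)$, twisted by the flat line bundle $R_\la$ on $E$. Since a non-compact Riemann surface is Stein, the holomorphic de Rham theorem identifies $H^1(M;\CL_\la)$ with the cohomology of the two-term complex of global holomorphic sections $\CO_M(M)\xrightarrow{\na}\Om^1_M(M)$, that is with $\Om^1_M(M)/\na(\CO_M(M))$. It then remains to compare this with the meromorphic quotient $\Om^1_\la(*D)(E)/\na(\CO_\la(*D)(E))$ through the map induced by inclusion of forms. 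The decisive point is that $\na$ has \emph{regular singularities} along $D$: writing $\om=\tpi c_0\,du+\sum_j c_j\rho(u-t_j)\,du$ and using the simple zero of $\vth_1$, one sees that $\om$ has only logarithmic poles on $D$ with residues $c_j\notin\Z$, while $R_\la$ contributes a connection that is everywhere regular on the compact $E$. By the comparison theorem for regular singular meromorphic connections on $E$ relative to $D$, the meromorphic de Rham complex with poles along $D$ computes the same cohomology as the holomorphic complex on $M$, which yields the asserted isomorphism and re-proves $\dim=n$ analytically.

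I expect the main obstacle to be exactly this last comparison. One must show both that an arbitrary holomorphic $1$-form on $M$ is $\na$-cohomologous to one with poles bounded along $D$ (no essential singularities), and that if such a meromorphic form equals $\na g$ for some $g$ holomorphic on $M$, then $g$ is itself meromorphic with poles on $D$. Both facts rely on the non-resonance condition $c_j\in\C-\Z$, which guarantees that solving $\na g=\varphi$ near each $t_j$ introduces no logarithmic terms and keeps $g$ meromorphic. An alternative to invoking the general comparison theorem would be to compute $\dim\bigl(\Om^1_\la(*D)(E)/\na(\CO_\la(*D)(E))\bigr)$ directly by a Riemann--Roch argument on $E$, filtering by pole order and using that the residue at each $t_j$ together with $\na$ pins down the quotient, and then to verify injectivity of the comparison map; the topological count $\dim H^1=n$ would then force the map to be an isomorphism.
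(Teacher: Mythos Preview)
The paper does not supply a proof of this statement: it is recorded as a Fact and attributed to \cite{Mano-Watanabe}. So there is nothing in the present paper to compare against line by line.

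Your proposal is correct. The topological half is standard and cleanly done; the only small remark is that for the vanishing of $H_0$ and $H^0$ it suffices that \emph{some} monodromy be nontrivial, and the loops around the $t_j$ (which see only the $\CL$ factor, since $R_\la$ extends over $E$) already give $e^{\pm\tpi c_j}\neq 1$, exactly as you say. For the analytic half, your route via Stein/Cartan~B on $M$ plus Deligne's comparison theorem for regular singular connections is a legitimate and efficient way to get the meromorphic de~Rham description, and you correctly isolate the non-resonance hypothesis $c_j\notin\Z$ as what rules out logarithmic terms when solving $\na g=\varphi$ locally near each $t_j$.

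As for how this compares with the source: from the way the present paper later invokes \cite[Proposition~2.5]{Mano-Watanabe} inside the proof of Proposition~\ref{prop:comparison}, one sees that Mano--Watanabe establish directly, by hand on the curve $E$, that every holomorphic $1$-form on $M$ is $\na$-cohomologous to a meromorphic one with poles on $D$, and then compute the dimension of the meromorphic quotient by a pole-order/Riemann--Roch count. That is essentially the ``alternative'' you sketch at the end. Your first approach trades that explicit computation for an appeal to the general comparison theorem; the payoff is brevity and conceptual clarity, at the cost of invoking heavier machinery than is strictly needed in dimension one. Either route is fine here.
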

Hereafter, we identify $H^1(M;\CL_{\la})$ with $\Om^1_{\la}(*D)(E)/\na(\CO_{\la}(*D)(E))$. 
We call an element in $Z_1(M;\CL_{\la}^{\vee})$ (resp. $\Om^1_{\la}(*D)(E)$) a twisted cycle (resp. cocycle). 
The natural pairing $H^1(M;\CL_{\la})\times H_1(M;\CL_{\la}^{\vee})
\ni ([\vph] ,[\ga]) \mapsto \int_{\ga} T(u)\vph \in \C$ is non-degenerate 
and gives the Riemann-Wirtinger integral.

The structure of $H^1(M;\CL_{\la})= \Om^1_{\la}(*D)(E)/\na(\CO_{\la}(*D)(E))$ is precisely studied in \cite{Mano-Watanabe}. 
As mentioned in \cite{Mano-Watanabe}, we may assume that $\la\in P=\{ a +b\tau \mid 0\leq a,b < 1 \}$ 
without loss of generality. We set 
\begin{align*}
  &\vph_0 (u;\la) =-\la \frs (u-t_1;\la) ,\qquad 
  \vph_1 (u;\la) =\frac{\pa\frs}{\pa u} (u-t_1;\la) ,\\ 
  &\vph_j (u;\la) =\frs (u-t_j;\la) -\frs (u-t_1;\la) \quad (j=2,\dots ,n).
\end{align*}
We can interpret $\vph_i (u;0)$ as $\lim_{\la\to 0} \vph_i (u;\la)$ which converges 
because of (\ref{eq:frs-laurent}). 
\begin{Fact}[\cite{Mano-Watanabe}]\label{fact:cohomology-basis}
  For any $\la\in P$, the $n+1$ classes $\{ [\vph_i (u;\la)du] \}_{i=0,\dots ,n}$ generate $H^1(M;\CL_{\la})$ 
  and satisfy a single relation\footnote{
As mentioned in \cite{Ghazouani-Pirio}, 
the relation written in \cite[p.\ 3876]{Mano-Watanabe} is not correct. 
The differences are ``$-\rho(t_j-t_1)$'' in the coefficient of $[\vph_0 (u;\la)du]$ and 
``$c_j$'' in the coefficient of $[\vph_j (u;\la)du]$. 
}
  \begin{align*}
    \Big( \tpi c_0 -c_1 \rho(\la) +\sum_{j=2}^n c_j \big(\frs(t_j-t_1;\la)-\rho(t_j-t_1)\big) \Big) [\vph_0 (u;\la)du]
    & \\
    +(c_1-1)\la [\vph_1 (u;\la)du]
    -\la \sum_{j=2}^n c_j \frs(t_j -t_1;\la) [\vph_j (u;\la)du] 
    &=0.
  \end{align*}
  In particular, we have the following properties. 
  \begin{enumerate}[{\rm (i)}]
  \item If $\la\in P-\{0\}$, then $\{ [\frs (u-t_j;\la)du] \}_{i=1,\dots ,n}$ form a basis of $H^1(M;\CL_{\la})$.
    Since $\frs (u-t_j;\la)du$ has a simple pole at $u=t_j$, 
    each element in $H^1(M;\CL_{\la})$ is represented by a $1$-form whose poles are of order $1$. 
  \item When $\la=0$, we have 
    \begin{align*}
      &\vph_0 (u;0)du =du ,\qquad 
      \vph_1 (u;0)du =\rho' (u-t_1)du ,\\ 
      &\vph_j (u;0)du =(\rho (u-t_j) -\rho (u-t_1))du \quad (j=2,\dots ,n),
    \end{align*}
    and the $1$-forms having only simple poles are not enough to generate $H^1(M;\CL_{\la})$. 
    The $1$-form $\rho' (u-t_1;\la)du$ which has a pole of order $2$ at $u=t_1$ is necessary to 
    give a basis. 
  \end{enumerate}
\end{Fact}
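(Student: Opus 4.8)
The plan is to derive everything from the identification $H^1(M;\CL_{\la})=\Om^1_{\la}(*D)(E)/\na(\CO_{\la}(*D)(E))$ of Fact \ref{fact:vanish-dim}, together with the count $\dim H^1(M;\CL_{\la})=n$ from the same Fact. Since I will exhibit $n+1$ generators, the statement splits into two parts: (a) producing a single explicit relation, which I obtain by writing down a function whose image under $\na$ is the asserted combination; and (b) showing that the classes span, for which, given $\dim=n$, it suffices to check that a suitable $n$-element subset is linearly independent.

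For part (a) I first compute the connection form. From $T(u)=e^{\tpi c_0 u}\prod_k \vth_1(u-t_k)^{c_k}$ and $\rho=\vth_1'/\vth_1$ one gets $\om=d\log T=\bigl(\tpi c_0+\sum_{k=1}^n c_k\rho(u-t_k)\bigr)du$. The right test function is $\frs(u-t_1;\la)$ itself: its only pole on $E$ is the simple pole at $t_1$ and its quasi-periodicity matches $R_{\la}$, so $\frs(u-t_1;\la)\in\CO_{\la}(*D)(E)$ and hence $\na\frs(u-t_1;\la)=0$ in $H^1(M;\CL_{\la})$. I then expand $\na\frs(u-t_1;\la)=d\frs(u-t_1;\la)+\frs(u-t_1;\la)\om$, where $d\frs(u-t_1;\la)=\vph_1(u;\la)\,du$ by definition of $\vph_1$, and read the relation off from the vanishing of this class.

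The one nontrivial input is the expansion of the products $\frs(u-t_1;\la)\rho(u-t_k)$ occurring in $\frs(u-t_1;\la)\om$. For $k\geq 2$ I apply (\ref{eq:theta-rel-1}) with its two indices specialized to $1$ and $k$, rewriting $\frs(u-t_1;\la)\rho(u-t_k)$ as a combination of $\frs(u-t_k;\la)\frs(t_k-t_1;\la)$, of $\frs(u-t_1;\la)$, and of the ``bad'' term $\frs(u-t_1;\la)\rho(u-t_1-\la)$. For $k=1$ the product $\frs(u-t_1;\la)\rho(u-t_1)$ is instead handled through the logarithmic-derivative identity $\vph_1(u;\la)=\frs(u-t_1;\la)\bigl(\rho(u-t_1-\la)-\rho(u-t_1)\bigr)$, which again creates the same bad term. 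The decisive point is that the total coefficient of $\frs(u-t_1;\la)\rho(u-t_1-\la)$ equals $\sum_{k=1}^n c_k=0$; this is exactly where the normalization $c_1+\dots+c_n=0$ enters, and this is the term that must be tracked correctly. After the cancellation I substitute $\frs(u-t_k;\la)=\vph_k+\frs(u-t_1;\la)$ and $\frs(u-t_1;\la)=-\vph_0/\la$, collect coefficients, and obtain precisely the displayed relation. For part (b) I reduce pole orders: near $t_j$ the form $\om$ has residue $c_j$, so subtracting $\na$ of a global function with prescribed principal part (built from the $u$-derivatives of $\frs(u-t_j;\la)$) lowers the pole order there, the relevant factor being $c_j-(\text{order}-1)$, which never vanishes since $c_j\in\C-\Z$. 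Thus for $\la\neq 0$ every class has a representative with at worst simple poles, (\ref{eq:theta-rel-2}) reduces any such form to the span of $\{\frs(u-t_j;\la)du\}_{j=1}^n$, and a residue analysis of $\na g$ shows these $n$ forms are independent, giving property (i) and, with the relation from (a), the spanning statement. Property (ii) follows by letting $\la\to 0$: by (\ref{eq:frs-laurent}) the coefficients of the relation have finite limits (the apparent $1/\la$ poles cancel), the relation degenerates to $\tpi c_0\,[du]+\sum_{j=2}^n c_j[(\rho(u-t_j)-\rho(u-t_1))du]=0$, i.e. $[\om]=[\na 1]=0$; since this ties together the $n$ forms $\vph_0(u;0),\vph_2(u;0),\dots,\vph_n(u;0)$ having only simple poles (and one holomorphic term), they cannot span the $n$-dimensional $H^1(M;\CL_{\la})$, forcing the order-two form $\vph_1(u;0)du=\rho'(u-t_1)du$ into any basis.

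The hard part will be the bookkeeping in step (a): keeping the $\frs(u-t_1;\la)\rho(u-t_1-\la)$ contributions from the $k=1$ and $k\geq 2$ terms on the same footing so that the $\sum c_k=0$ cancellation is visible, and then correctly assembling the coefficient of $\vph_0$, namely $\tpi c_0-c_1\rho(\la)+\sum_{j=2}^n c_j\bigl(\frs(t_j-t_1;\la)-\rho(t_j-t_1)\bigr)$. This is exactly the coefficient the footnote flags as misprinted in \cite{Mano-Watanabe}, so it is the step demanding the most care.
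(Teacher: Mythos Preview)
The paper does not prove this statement: it is recorded as a Fact from \cite{Mano-Watanabe} and only quoted. Your proposal is therefore not competing with anything in the paper, but rather supplying the argument behind the citation. The core of your derivation of the relation is correct: computing $\na\frs(u-t_1;\la)$, using the logarithmic-derivative identity $\partial_u\frs(u-t_1;\la)=\frs(u-t_1;\la)\bigl(\rho(u-t_1-\la)-\rho(u-t_1)\bigr)$ for the $k=1$ term, using (\ref{eq:theta-rel-1}) for $k\geq 2$, and observing that the combined coefficient of $\frs(u-t_1;\la)\rho(u-t_1-\la)$ is $\sum_k c_k=0$, one arrives at exactly the displayed relation after multiplying through by $-\la$.

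Two points deserve tightening. First, your appeal to (\ref{eq:theta-rel-2}) for the claim that simple-pole forms lie in the span of $\{\frs(u-t_j;\la)du\}_{j=1}^n$ is not the right tool: the clean argument is that any $f\,du\in\Om^1_{\la}(*D)(E)$ with only simple poles satisfies $f-\sum_j\Res_{t_j}(f)\,\frs(u-t_j;\la)\in H^0(E,R_\la)=0$ for $\la\in P-\{0\}$. Second, for $\la=0$ you correctly show that the $n$ forms $\vph_0,\vph_2,\dots,\vph_n$ are dependent, hence cannot span; but you do not actually verify that adjoining $\vph_1(u;0)=\rho'(u-t_1)$ restores spanning. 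One must note that the pole-reduction argument now terminates at order~$2$ (no elliptic function has a single simple pole), so every class is represented by a form with at worst a double pole at $t_1$ and simple poles elsewhere; such forms are manifestly in the span of $\{\vph_i(u;0)du\}_{i=0}^n$.

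It is worth noting that the present paper later gives an independent confirmation of the linear-independence part of (i) and (ii) via the nonvanishing of the intersection determinants in Theorem~\ref{th:cohomology-intersection-1} and Theorem~\ref{th:cohomology-intersection-0}; that route bypasses the residue argument entirely.
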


\begin{figure}
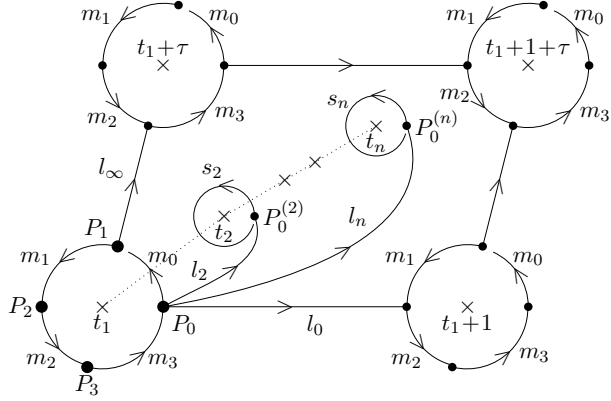

  \centering
  \scalebox{0.8}{\RWpictureA}
  \caption{The twisted cycles introduced in \cite{Mano-Watanabe}}
  \label{fig:cycle-1}
\end{figure}
Generators of $H_1(M;\CL_{\la}^{\vee})$ are also given in \cite{Mano-Watanabe}. 
We recall them. 
By definition, each twisted cycle is loaded with the information of the branch of $T(u)S(u)$ on 
its support, where $S(u)$ is a section of $R_{\la}$. 
Let $l_{\infty},l_0,l_2,\dots ,l_n$, $s_2,\dots ,s_n$, $m_0,m_1,m_2,m_3$ be $1$-chains drawn in Figure \ref{fig:cycle-1}. 
For a fixed branch of $T(u)S(u)$ at $P_0$, the branches on $l_0,l_2,\dots ,l_n, m_0$ and $m_3\cup m_2 \cup m_1$ are 
naturally defined. 
The branch on $s_j$ (resp. $l_{\infty}$) is defined so that the branch at the start point is same as 
that at the end point of $l_j$ (resp. start point of $m_1$).  
% We note the structure of the local system $\CL_{\la}^{\vee}$: 
We note the images of some twisted $1$-chains under the boundary operator: 
\begin{align*}
  &\pa (m_1+m_2+m_3+m_0) =(e^{\tpi c_1}-1)P_1 ,&
  &\pa s_j =(e^{\tpi c_j}-1) P_0^{(j)} \quad (j=2,\dots ,n),& \\
  &\pa (m_2+m_3+l_0)=(e^{\tpi c_0}-1) P_2 ,&
  &\pa (-m_2-m_1+l_{\infty})=(e^{-\tpi c_{\infty}}-1) P_3 ,&
\end{align*}
where we put $c_{\infty}=-\la-c_0 \tau -c_1 t_1 -\dots -c_n t_n$. 
We define $n+1$ twisted cycles\footnote{
As mentioned in \cite{Ghazouani-Pirio}, 
we note that the coefficient of $(m_0+e^{\tpi c_1}m_1)$ in $\ga_0$ of \cite[p.\ 3877]{Mano-Watanabe}
should be $(1-e^{\tpi c_0})/(e^{\tpi c_1}-1)$.
}
as the regularization of paths joining points in $D$: 
\begin{align*}
  \ga_{1j} 
  &=\reg(t_1, t_j) =\frac{m_0+e^{\tpi c_1}(m_1+m_2+m_3)}{e^{\tpi c_1}-1} +l_j -\frac{s_j}{e^{\tpi c_j}-1} 
    \quad (j=2,\dots ,n), \\
  \ga_{10} 
  &=\reg(t_1,t_1+1)
    =\frac{m_0+e^{\tpi c_1}(m_1+m_2+m_3)}{e^{\tpi c_1}-1} +l_0 
    -e^{\tpi c_0}\frac{(m_2+m_3+m_0)+e^{\tpi c_1}m_1}{e^{\tpi c_1}-1} \\
  &=l_0+\frac{(1-e^{\tpi c_0})m_0+(1-e^{\tpi c_0})e^{\tpi c_1}m_1+(e^{\tpi c_1}-e^{\tpi c_0})(m_2+m_3)}{e^{\tpi c_1}-1} ,\\
  \ga_{1\infty} 
  &=\reg(t_1,t_1+\tau)
    =\frac{m_1+m_2+m_3+m_0}{e^{\tpi c_1}-1} +l_{\infty} 
    -e^{-\tpi c_{\infty}}\frac{(m_3+m_0)+e^{\tpi c_1}(m_1+m_2)}{e^{\tpi c_1}-1} \\
  &=l_{\infty}+\frac{(1-e^{-\tpi c_{\infty}})(m_3+m_0)+(1-e^{\tpi (c_1-c_{\infty})})(m_1+m_2)}{e^{\tpi c_1}-1} .
\end{align*}
\begin{Fact}[\cite{Mano-Watanabe}]
  The twisted homology group $H_1(M;\CL_{\la}^{\vee})$ is generated by $\{ [\ga_{1j}] \}_{j=2,\dots,n,0,\infty}$, 
  and these generators satisfy a single $\C$-linear relation
  \begin{align}
    \label{eq:homology-basis-relation}
    (e^{\tpi c_0}-1)[\ga_{1\infty}]+(1-e^{-\tpi c_{\infty}})[\ga_{10}]
    -\sum_{j=2}^n e^{-\tpi (c_1+\dots +c_j)}(1-e^{\tpi c_j})[\ga_{1j}] =0.
  \end{align}
\end{Fact}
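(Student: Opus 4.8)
The plan is to separate the two assertions---that the $n+1$ classes generate and that they satisfy exactly one explicitly given relation---and to settle the counting first. By Fact~\ref{fact:vanish-dim} we have $\dim H_1(M;\CL_{\la}^{\vee})=n$. Granting that the classes $[\ga_{1j}]$ ($j=2,\dots,n,0,\infty$) generate, the resulting surjection $\C^{n+1}\twoheadrightarrow H_1(M;\CL_{\la}^{\vee})$ has kernel of dimension exactly $(n+1)-n=1$, so there is a unique relation up to scalar; it then suffices to exhibit one nonzero relation and check it is the displayed one. For generation I would use the topological model behind Figure~\ref{fig:cycle-1}: the punctured torus $M=E-D$ deformation retracts onto a $1$-complex built from the two period arcs $l_0,l_{\infty}$ together with the arcs $l_2,\dots,l_n$ joining $t_1$ to the remaining punctures, and $\ga_{10},\ga_{1\infty},\ga_{1j}$ are precisely the twisted $1$-cycles supported on these arcs, closed up by the loops $s_j$ around $t_j$ and $m_0,m_1,m_2,m_3$ around $t_1$. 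Since this arc system carries all of the first homology of the retract, the regularized cycles generate $H_1(M;\CL_{\la}^{\vee})$.

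The heart of the argument is producing the one relation, and for this I would compute the twisted boundary of the fundamental $2$-cell. Take the parallelogram $F$ with vertices $t_1,t_1+1,t_1+1+\tau,t_1+\tau$ in the universal cover $\C$, equip it with the branch of $T(u)S(u)$ extending the reference branch at $P_0$, and cut it open along the arcs $l_2,\dots,l_n$ from the corner $t_1$ to the interior punctures (excising small disks about each $t_j$ so that $F$ becomes a genuine $2$-chain in $M$). Reading off $\partial F$ as a twisted $1$-chain, the four outer edges contribute the two period directions and the cut arcs contribute the loops about $t_2,\dots,t_n$; as $F$ is an honest $2$-chain in $M$, its boundary is null-homologous, and rewriting $\partial F$ in terms of $\ga_{10},\ga_{1\infty},\ga_{1j}$ yields the relation. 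The coefficients are forced by the three monodromies of $T(u)S(u)$: crossing the horizontal period multiplies the branch by $e^{\tpi c_0}$ (using $c_1+\dots+c_n=0$ and $\vth_1(u+1)=-\vth_1(u)$), crossing the vertical period multiplies it by $e^{-\tpi c_{\infty}}$ (using $\vth_1(u+\tau)=e^{-\pii(\tau+2u+1)}\vth_1(u)$, $e_{\la}(\tau)=e^{\tpi \la}$, and $c_{\infty}=-\la-c_0\tau-c_1t_1-\dots-c_nt_n$), and encircling $t_j$ multiplies it by $e^{\tpi c_j}$. These produce the factors $e^{\tpi c_0}-1$ on $[\ga_{1\infty}]$ and $1-e^{-\tpi c_{\infty}}$ on $[\ga_{10}]$---the cross-pairing of the horizontal and vertical monodromies being the usual commutator pattern of the torus---and $(1-e^{\tpi c_j})$ times an accumulated branch factor on each $[\ga_{1j}]$.

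I expect the main obstacle to be the branch bookkeeping in this last step, in particular pinning down the cumulative factor $e^{-\tpi(c_1+\dots+c_j)}$ multiplying $[\ga_{1j}]$. This records the value of the chosen branch of $T(u)S(u)$ near $t_j$ relative to $P_0$, namely the product of monodromies accumulated as $\partial F$ is traversed past the successive arcs $l_2,\dots,l_j$ in the order fixed by the branch conventions of Figure~\ref{fig:cycle-1}; keeping the orientations of the $m_i$ and $s_j$, the signs, and these multipliers mutually consistent is the delicate point. A useful check is that the boundary relations $\pa s_j=(e^{\tpi c_j}-1)P_0^{(j)}$, $\pa(m_2+m_3+l_0)=(e^{\tpi c_0}-1)P_2$, and $\pa(-m_2-m_1+l_{\infty})=(e^{-\tpi c_{\infty}}-1)P_3$ recorded above must reproduce exactly these multipliers, which both fixes the normalizations and confirms that $\partial F$ is closed before one concludes it is a boundary.
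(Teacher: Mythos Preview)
Your outline is correct: generation from a cell model of the punctured torus together with the single relation coming from the vanishing of the twisted boundary of the fundamental $2$-cell is the natural topological argument, and you have correctly identified the governing monodromy factors $e^{\tpi c_0}$, $e^{-\tpi c_\infty}$, $e^{\tpi c_j}$. The cumulative branch factor $e^{-\tpi(c_1+\dots+c_j)}$ is, as you say, the place that demands care; anchoring the normalizations to the boundary formulas for $\pa s_j$, $\pa(m_2+m_3+l_0)$, $\pa(-m_2-m_1+l_{\infty})$ recorded just above the Fact is the right consistency check.

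The paper itself quotes this statement as a Fact from \cite{Mano-Watanabe} without direct proof, but it does supply an independent derivation by a genuinely different route, using the intersection form. The nonvanishing determinant of $H_{11}$ in Fact~\ref{fact:homology-intersection-basis} shows that $\{[\ga_{1j}]\}_{j=2,\dots,n-1,0,\infty}$ is already a basis, so the $n+1$ classes generate; and the diagonal matrix $H_{1n}$ of Proposition~\ref{prop:homology-int-diag-1n} yields an explicit expansion formula for an arbitrary $[\ga]$ in this basis, which for $\ga=\ga_{1n}$ the paper notes reproduces~\eqref{eq:homology-basis-relation}. Your approach is geometric and self-contained, requiring no machinery beyond the $2$-chain $F$; the paper's route trades the delicate branch bookkeeping for intersection-number computations, with the payoff that the coefficients of the relation fall out mechanically from the diagonal entries $I_h([\ga_{1k}],[\ga_{nk}^{\vee}])$ and the numbers $I_h([\ga_{1n}],[\ga_{nk}^{\vee}])$ without any sign-tracking along $\partial F$.
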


\section{Intersection theory for twisted homology group}\label{sec:homology-intersection}
\subsection{Intersection form}\label{subsec:homology-intersection-form}
The homology intersection form $I_h$ is 
a non-degenerate bilinear form between $H_1(M;\CL_{\la}^{\vee})$ and $H_1(M;\CL_{\la})$: 
\begin{align*}
  I_h(\bu ,\bu) : H_1(M;\CL_{\la}^{\vee}) \times H_1(M;\CL_{\la}) \longrightarrow \C .
\end{align*}
For a twisted cycle $\ga \in Z_1(M;\CL_{\la}^{\vee})$, we can construct $\ga^{\vee} \in Z_1(M;\CL_{\la})$ by 
replacing $(\la,c_0,c_1,\dots ,c_n)$ with $(-\la,-c_0,-c_1,\dots ,-c_n)$
(and hence $c_{\infty}$ is also replaced by $-c_{\infty}$). 

According to \cite{KY}, the intersection number $I_h([\ga],[\de])$ for
$[\ga]\in H_1(M;\CL_{\la}^{\vee})$ and $[\de]\in H_1(M;\CL_{\la})$ is evaluated as follows. 
We may assume that twisted cycles $\ga$ and $\de$ are expressed as 
\begin{align*}
  \ga = \sum_{i} a_i \cdot \Delta_i \ot (TS)_{\Delta_i} ,\quad 
  \de = \sum_{j} a'_j \cdot \Delta'_j \ot (TS)_{\Delta'_j}^{-1}
  \qquad (a_i,a'_j\in \C),
\end{align*}
where $\Delta_i$ and $\Delta'_j$ are simply connected and  
the intersection $\Delta_i \cap \Delta'_j$ is at most one point 
at which they intersect transversally, and 
$(TS)_{\Delta_i}$ denotes the branch of $T(u)S(u)$ on $\Delta_i$. 
Then the intersection number is evaluated as 
\begin{align*}
  I_h([\ga],[\de])
  =\sum_{\Delta_i \cap \Delta'_j =\{ u_{ij} \} }
  a_i a'_j \cdot
  I_{\rm loc}(\Delta_i , \Delta'_j) \cdot 
  % (\Delta_i , \Delta'_j)_{\textrm{topological}} \cdot 
  (TS)_{\Delta_i}(u_{ij}) \cdot (TS)_{\Delta'_j}(u_{ij})^{-1} ,
\end{align*}
where $I_{\rm loc}$ is the local intersection multiplicity.

Note that we often consider $c_j$ as an indeterminate and we can regard an intersection number 
as an element in the field $\C(e^{\tpi c_{*}}):=\C(e^{\tpi c_1},\dots ,e^{\tpi c_n},e^{\tpi c_0},e^{\tpi c_{\infty}})$. 
In such a situation, 
for $[\ga],[\de]\in H_1(M;\CL_{\la}^{\vee})$, we have 
$I_h([\ga],[\de^{\vee}])=-I_h([\de],[\ga^{\vee}])^{\vee}$, where 
the last $\vee$ stands for the involution on $\C(e^{\tpi c_{*}})$
given by $(c_1,\dots,c_n,c_0,c_{\infty}) \mapsto (-c_1,\dots,-c_n,-c_0,-c_{\infty})$. 
% We can verify this property in our formulas. 

\subsection{Intersection numbers}
In this section, we give formulas of the intersection numbers for 
the twisted cycles introduced in \S \ref{subsec:homology-cohomology}. 
Though they are computed in \cite{Ghazouani-Pirio} essentially, we rewrite them in our notations. 
We also define other twisted cycles which will be used in \S \ref{subsec:monodromy}, 
and give their intersection numbers. 
Precise computations will be given in \S \ref{subsec:eval-homology-intersection}. 

\begin{Fact}[{\cite[Proposition 3.4.1]{Ghazouani-Pirio}}]\label{fact:homology-intersection-basis}
  % The intersection numbers between 
  % % $\ga_{12},\dots,\ga_{1n},\ga_{10},\ga_{1\infty}$ and 
  % % $\ga_{12}^{\vee},\dots,\ga_{1n}^{\vee},\ga_{10}^{\vee},\ga_{1\infty}^{\vee}$ 
  % $[\ga_{1j}]$'s and $[\ga_{1k}^{\vee}]$'s 
  % are given as follows. 
  For $j,k\in \{2,\dots,n\}$, we have 
  \begin{align*}
    &I_h ([\ga_{1j}] ,[\ga_{1k}^{\vee}])=
    \begin{cases}
      \frac{e^{\tpi c_1}}{1-e^{\tpi c_1}} & (j<k) \\
      \frac{1}{1-e^{\tpi c_1}} & (j>k),
    \end{cases} \qquad 
    I_h ([\ga_{1j}] ,[\ga_{1j}^{\vee}])
      =\frac{1-e^{\tpi (c_1 +c_j)}}{(1-e^{\tpi c_1})(1-e^{\tpi c_j})} ,\\
    &I_h ([\ga_{1j}] ,[\ga_{10}^{\vee}]) = \frac{e^{\tpi c_1}(1-e^{-\tpi c_0})}{1-e^{\tpi c_1}} ,\qquad 
      I_h ([\ga_{10}] ,[\ga_{1j}^{\vee}]) =\frac{1-e^{\tpi c_0}}{1-e^{\tpi c_1}}, \\
    &I_h ([\ga_{1j}] ,[\ga_{1\infty}^{\vee}]) = \frac{e^{\tpi c_1}(1-e^{\tpi c_{\infty}})}{1-e^{\tpi c_1}} ,\qquad 
      I_h ([\ga_{1\infty}] ,[\ga_{1j}^{\vee}]) =\frac{1-e^{-\tpi c_{\infty}}}{1-e^{\tpi c_1}}, \\
    &I_h ([\ga_{10}] ,[\ga_{10}^{\vee}])
      =-\frac{(e^{\tpi c_0}-1)(e^{\tpi c_0}-e^{\tpi c_1})}{e^{\tpi c_0}(1-e^{\tpi c_1})} , \\ 
    &I_h ([\ga_{1\infty}] ,[\ga_{1\infty}^{\vee}])
      =-\frac{(e^{\tpi c_{\infty}}-1)(e^{\tpi c_{\infty}}-e^{\tpi c_1})}{e^{\tpi c_{\infty}}(1-e^{\tpi c_1})} ,\\
    &I_h ([\ga_{10}] ,[\ga_{1\infty}^{\vee}]) 
      =\frac{e^{\tpi c_1}-e^{\tpi (c_0+c_1)}-e^{\tpi (c_1 +c_{\infty})}+e^{\tpi (c_0+c_{\infty})}}{1-e^{\tpi c_1}}, \\
    &I_h ([\ga_{1\infty}] ,[\ga_{10}^{\vee}]) 
      =\frac{1-e^{-\tpi c_{\infty}}-e^{-\tpi c_0}+e^{\tpi (-c_0+c_1-c_{\infty})}}{1-e^{\tpi c_1}}. 
  \end{align*}
  In particular, the determinant of 
  the intersection matrix $H_{11}=\big( I_h ([\ga_{1j}] ,[\ga_{1k}^{\vee}]) \big)_{j,k= 2,\dots,n-1,0,\infty }$ is equal to 
  \begin{align*}
    \frac{1-e^{\tpi (c_1+\cdots +c_{n-1})}}{(1-e^{\tpi c_1})\cdots(1-e^{\tpi c_{n-1}})}
    =\frac{1-e^{-\tpi c_n}}{(1-e^{\tpi c_1})\cdots(1-e^{\tpi c_{n-1}})}.
  \end{align*}
\end{Fact}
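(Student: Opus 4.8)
The plan is to apply the Kita--Yoshida recipe recalled in \S\ref{subsec:homology-intersection-form}: realize each $\ga_{1\bu}$ and each dual $\ga_{1\bu}^\vee$ by the explicit $1$-chains of Figure \ref{fig:cycle-1}, put the two families into transverse position by an arbitrarily small perturbation, and then sum the contributions $a_i a'_j \cdot I_{\rm loc}(\Delta_i,\Delta'_j)\cdot (TS)_{\Delta_i}(u_{ij})(TS)_{\Delta'_j}(u_{ij})^{-1}$ over the finitely many surviving crossings. The key structural observation is that every one of the cycles $\ga_{1j},\ga_{10},\ga_{1\infty}$ is a regularization of a path issuing from $t_1$, so all of them carry the \emph{same} loop datum $m_0,m_1,m_2,m_3$ around $t_1$ with the common coefficient $\tfrac{1}{e^{\tpi c_1}-1}$. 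Consequently the crossings localize either near $t_1$ (where these loops meet) or near the far endpoint of the relevant segment, and the bookkeeping cleanly splits into a ``loop near $t_1$'' part and a ``far endpoint'' part.

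First I would treat the off-diagonal cases $I_h([\ga_{1j}],[\ga_{1k}^\vee])$ with $j\ne k$, $j,k\in\{2,\dots,n\}$. After perturbation the straight segments $l_j,l_k$ run to distinct punctures and meet only near $t_1$, where the relevant crossing of one segment against the regularization loop of the other survives; its branch factor equals $e^{\tpi c_1}$ or $1$ according to whether the crossing sits on the already-monodromied side of the loop, i.e. according to $j<k$ or $j>k$. Multiplying by the coefficient $\tfrac{1}{e^{\tpi c_1}-1}$ carried by $m_0+e^{\tpi c_1}(m_1+m_2+m_3)$ in each cycle yields the stated $\tfrac{e^{\tpi c_1}}{1-e^{\tpi c_1}}$ and $\tfrac{1}{1-e^{\tpi c_1}}$.

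The self-intersections $I_h([\ga_{1j}],[\ga_{1j}^\vee])$ and the period cases involving $\ga_{10},\ga_{1\infty}$ form the technical core. For the self-intersection the two cycles are the same regularized interval, so \emph{both} endpoints $t_1$ and $t_j$ contribute; each endpoint produces a local regularized self-intersection of the standard loaded-segment type, and combining the two (with the $I_{\rm loc}$ signs) gives $\tfrac{1-e^{\tpi(c_1+c_j)}}{(1-e^{\tpi c_1})(1-e^{\tpi c_j})}$. For $\ga_{10}$ and $\ga_{1\infty}$ the segments $l_0,l_\infty$ traverse the fundamental parallelogram, so beyond the loop at $t_1$ one must track the jump of the branch of $T(u)S(u)$ across the edges of the parallelogram; this is exactly where $e^{\tpi c_0}$ (from $e^{\tpi c_0 u}$ together with the $\La_\tau$-translation by $1$) and $e^{\tpi c_\infty}$ (from the quasi-periodicity of the $\vth_1$-product and the $R_\la$-datum, recall $c_\infty=-\la-c_0\tau-\sum c_j t_j$) enter. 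Assembling endpoint loop, boundary-jump factor, and normalizing denominator reproduces the remaining eight entries; the antisymmetry $I_h([\ga],[\de^\vee])=-I_h([\de],[\ga^\vee])^\vee$ from \S\ref{subsec:homology-intersection-form} halves the labor, giving the $(\infty,0)$ entry from the $(0,\infty)$ one and so on. The hardest part is keeping the branch of $TS$ consistent as the perturbed period cycle crosses a parallelogram edge while simultaneously winding around $t_1$, since a single misplaced $e^{\tpi c_0}$ or $e^{\tpi c_\infty}$ factor propagates through all four period entries.

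Finally, for the determinant of $H_{11}=\big(I_h([\ga_{1j}],[\ga_{1k}^\vee])\big)_{j,k=2,\dots,n-1,0,\infty}$ I would argue directly from the entries just computed. Factoring $\tfrac{1}{1-e^{\tpi c_1}}$ out of every row leaves a matrix whose $\{2,\dots,n-1\}$-block is nearly constant ($e^{\tpi c_1}$ above the diagonal, $1$ below, and a $\tfrac{1-e^{\tpi(c_1+c_j)}}{1-e^{\tpi c_j}}$-type diagonal), bordered by two rows and columns for the indices $0,\infty$. Subtracting adjacent rows and columns collapses the constant pattern and reduces the evaluation to a small explicit determinant; a short induction on $n$ then yields $\tfrac{1-e^{\tpi(c_1+\cdots+c_{n-1})}}{(1-e^{\tpi c_1})\cdots(1-e^{\tpi c_{n-1}})}$, and the second stated form follows from $c_1+\cdots+c_n=0$, i.e. $e^{\tpi(c_1+\cdots+c_{n-1})}=e^{-\tpi c_n}$.
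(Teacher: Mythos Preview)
Your approach is essentially the same as the paper's: apply the Kita--Yoshida recipe of \S\ref{subsec:homology-intersection-form} to the explicit chain decompositions of Figure~\ref{fig:cycle-1}, locate the transverse crossings, and sum the products of coefficients, local intersection multiplicities, and branch ratios. The paper (in \S\ref{subsec:eval-homology-intersection}) only works out a few representative entries explicitly---e.g.\ $I_h([\ga_{1j}],[\ga_{1\infty}^{\vee}])$ and $I_h([\ga_{10}],[\ga_{1\infty}^{\vee}])$---and otherwise defers to \cite{Ghazouani-Pirio}; your narrative covers the same mechanism and correctly identifies the antisymmetry $I_h([\ga],[\de^{\vee}])=-I_h([\de],[\ga^{\vee}])^{\vee}$ as a labor-saving device.
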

This determinant formula implies that $\{ [\ga_{1j}] \}_{j= 2,\dots,n-1,0,\infty}$ give a basis 
of $H_1(M;\CL_{\la}^{\vee})$ under the condition $c_1,\dots ,c_n\not\in \Z$. 
This is another approach to \cite[Theorem 3.1]{Mano-Watanabe}. 

\begin{Rem}
  Let $H_{11,0\infty}=\big( I_h ([\ga_{1j}] ,[\ga_{1k}^{\vee}]) \big)_{j,k=0,\infty }$ be a 
  $2\times 2$ submatrix of $H_{11}$. 
  By straightforward calculation, we have $\det (H_{11,0\infty})=1$. 
  Furthermore, if we put $c_0=c_{\infty}=0$, then we have 
  $H_{11,0\infty}=\left(
    \begin{smallmatrix}
      0 & 1 \\ -1 & 0
    \end{smallmatrix} \right)$. 
  Indeed, $\{\ga_{10},\ga_{1\infty}\}$ can be naturally identified with 
  a symplectic basis of the usual homology group $H_1 (E;\Z)$, 
  since the coefficients of $m_0,m_1$ (resp. $m_0,m_3$) in the definition of $\ga_{10}$ (resp. $\ga_{1\infty}$)
  become zero. 
\end{Rem}

For $2\leq j <k \leq n$, we set 
$\ga_{jk}=\reg (t_j,t_k) =\ga_{1k}-\ga_{1j}$. 
As a corollary of Fact \ref{fact:homology-intersection-basis}, we easily obtain the following. 
\begin{Cor}\label{cor:homology-intersection-interval}
  For $j,j',k,k'\in \{1,\dots,n\}$ satisfying $j<k$, $j'<k'$, the intersection number 
  $I_h ([\ga_{jk}] ,[\ga_{j'k'}^{\vee}])$ is given as follows. 
  \begin{itemize}
  \item If $(j,k)=(j',k')$, then $I_h ([\ga_{jk}] ,[\ga_{jk}^{\vee}])
    =\frac{e^{\tpi (c_j+c_k)}}{(1-e^{\tpi c_j})(1-e^{\tpi c_k})}$. 
  \item If $j=j'$, then 
    $I_h ([\ga_{jk}] ,[\ga_{jk'}^{\vee}])=
    \begin{cases}
      \frac{e^{\tpi c_j}}{1-e^{\tpi c_j}} & (k<k') \\
      \frac{1}{1-e^{\tpi c_j}} & (k>k') .
    \end{cases}$
  \item If $k=k'$, then 
    $I_h ([\ga_{jk}] ,[\ga_{j'k}^{\vee}])=
    \begin{cases}
      \frac{e^{\tpi c_k}}{1-e^{\tpi c_k}} & (j<j') \\
      \frac{1}{1-e^{\tpi c_k}} & (j>j') .
    \end{cases}$
  \item If $k=j'$, then $I_h ([\ga_{jk}] ,[\ga_{kk'}^{\vee}])=\frac{-1}{1-e^{\tpi c_k}}$. 
  \item If $j=k'$, then $I_h ([\ga_{jk}] ,[\ga_{j'j}^{\vee}])=\frac{-e^{\tpi c_j}}{1-e^{\tpi c_j}}$. 
  \item If $j<j'<k<k'$ (resp. $j'<j<k'<k$), then $I_h ([\ga_{jk}] ,[\ga_{j'k'}^{\vee}])=-1$
    (resp. $1$). 
  \item Otherwise, $I_h ([\ga_{jk}] ,[\ga_{j'k'}^{\vee}])=0$. 
  \end{itemize}
  If $2\leq j<k \leq n$, then we have 
  \begin{align*}
    I_h ([\ga_{jk}] ,[\ga_{10}^{\vee}])
    =I_h ([\ga_{jk}] ,[\ga_{1\infty}^{\vee}])
    =I_h ([\ga_{10}] ,[\ga_{jk}^{\vee}])
    =I_h ([\ga_{1\infty}] ,[\ga_{jk}^{\vee}])
    =0.
  \end{align*}
\end{Cor}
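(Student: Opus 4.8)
The plan is to reduce everything to Fact~\ref{fact:homology-intersection-basis} by linearity. By definition $\ga_{jk} = \ga_{1k} - \ga_{1j}$, and if we adopt the convention $\ga_{11} = 0$ this relation holds uniformly for all $1 \leq j < k \leq n$; applying the parameter substitution that defines $(\,\cdot\,)^{\vee}$ to both sides gives $\ga_{j'k'}^{\vee} = \ga_{1k'}^{\vee} - \ga_{1j'}^{\vee}$ as well. Since $I_h$ is bilinear on homology classes,
\begin{align*}
  I_h([\ga_{jk}],[\ga_{j'k'}^{\vee}])
  ={}& I_h([\ga_{1k}],[\ga_{1k'}^{\vee}]) - I_h([\ga_{1k}],[\ga_{1j'}^{\vee}]) \\
  & - I_h([\ga_{1j}],[\ga_{1k'}^{\vee}]) + I_h([\ga_{1j}],[\ga_{1j'}^{\vee}]),
\end{align*}
where, because $\ga_{11}=0$, any term carrying an index equal to $1$ is simply dropped. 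Thus the whole corollary is a matter of inserting the four already-known numbers and simplifying.

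First I would note that each factor $I_h([\ga_{1a}],[\ga_{1b}^{\vee}])$ from Fact~\ref{fact:homology-intersection-basis} takes one of only three shapes, according to whether $a<b$, $a>b$, or $a=b$. Hence the value of the expansion is dictated purely by the relative order of the four indices $j,k,j',k'$, and I would run the case analysis along exactly the cases displayed in the statement (diagonal, shared left endpoint $j=j'$, shared right endpoint $k=k'$, the abutting cases $k=j'$ and $j=k'$, the two crossing patterns, and the nested or disjoint remainder). In each case the four substituted fractions are brought over a common denominator and collapsed using the elementary identity $e^{\tpi c_1}(1-e^{\tpi c_k}) - (1-e^{\tpi(c_1+c_k)}) = -(1-e^{\tpi c_1})$, which is what makes the spurious dependence on $e^{\tpi c_1}$ cancel. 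For instance, in a crossing case all four factors are off-diagonal and the combination telescopes to $\tfrac{e^{\tpi c_1}-1}{1-e^{\tpi c_1}}=-1$; in the diagonal case the expansion returns the self-intersection of the regularized interval $\reg(t_j,t_k)$, computed exactly as the self-intersection of $\ga_{1j}$ in Fact~\ref{fact:homology-intersection-basis} with the roles of $(c_1,c_j)$ played by $(c_j,c_k)$.

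For the final display I would avoid the case analysis altogether and instead use one structural remark: reading off Fact~\ref{fact:homology-intersection-basis}, the four numbers $I_h([\ga_{1j}],[\ga_{10}^{\vee}])$, $I_h([\ga_{1j}],[\ga_{1\infty}^{\vee}])$, $I_h([\ga_{10}],[\ga_{1j}^{\vee}])$, $I_h([\ga_{1\infty}],[\ga_{1j}^{\vee}])$ are all independent of the index $j\in\{2,\dots,n\}$. Consequently, writing $\ga_{jk}=\ga_{1k}-\ga_{1j}$ and using bilinearity, each of the four intersection numbers in that display is a difference of two equal quantities, hence $0$.

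There is no conceptual obstacle here; the work is entirely bookkeeping. The two places that need care are the consistent use of the convention $\ga_{11}=0$ (so that a degenerate index produces a dropped term rather than being fed into the diagonal formula), and the correct reading of the three possible shapes of each factor from the ordering of $j,k,j',k'$, especially at the boundary where an index equals $1$. I expect the abutting cases $k=j'$ and $j=k'$, where a diagonal factor meets two off-diagonal factors, to be the most error-prone, so I would carry those out first and in full before recording the remaining cases.
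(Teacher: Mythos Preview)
Your approach is correct and is essentially the paper's own: the corollary is stated immediately after the definition $\ga_{jk}=\ga_{1k}-\ga_{1j}$ and is introduced with ``As a corollary of Fact~\ref{fact:homology-intersection-basis}, we easily obtain the following,'' i.e., exactly the bilinear expansion you describe; in \S\ref{subsec:eval-homology-intersection} the paper reiterates that the cases $j,j'\geq 2$ follow from the case $j=j'=1$ and remarks that one can alternatively compute each case directly \`a la Kita--Yoshida. Your observation that the four mixed numbers $I_h([\ga_{1j}],[\ga_{10}^{\vee}])$, etc., are independent of $j$ is a clean way to handle the final display and matches the spirit of the paper's omitted details.
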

Let $H_1$ be the intersection matrix for $\ga_{23}, \ga_{34},\dots, \ga_{n-1,n},\ga_{10},\ga_{1\infty}$. 
Then $H_1$ is of the form 
$\left(
\begin{smallmatrix}
  H_1' & O \\ O & H_{11,0\infty}
\end{smallmatrix} \right)$ with a tridiagonal matrix $H_1'$. 
As evaluated in \cite[Remark 2.2]{KY}, we have 
\begin{align*}
  \det H_1' 
  = \frac{1-e^{\tpi (c_2+\cdots +c_n)}}{(1-e^{\tpi c_2})\cdots(1-e^{\tpi c_n})}
  = \frac{1-e^{-\tpi c_1}}{(1-e^{\tpi c_2})\cdots(1-e^{\tpi c_n})} ,
\end{align*}
and hence, $[\ga_{23}], [\ga_{34}],\dots, [\ga_{n-1,n}],[\ga_{10}],[\ga_{1\infty}]$ also give a basis of 
$H_1(M;\CL_{\la}^{\vee})$ under the condition $c_1,\dots ,c_n\not\in \Z$. 

\begin{Rem}
  \begin{enumerate}[(1)]
  \item Note that $H_1'$ coincides with the intersection matrix for the basis 
    $\{ \reg (x_j,x_{j+1}) \}_{j=2,\dots ,n-1}$ of the twisted homology group 
    associated with the multi-valued function $(z-x_2)^{c_2}\cdots (z-x_n)^{c_n}$ on 
    $\P^1_z-\{x_1=\infty,x_2,\dots ,x_n \}$ (see, e.g., \cite[Theorem 2.1]{KY}). 
  \item If $c_1,\dots ,c_n ,c_0,c_{\infty}\in \R$, then $\sqrt{-1}H_1$ is a monodromy invariant Hermitian matrix 
    in the sense of \cite[\S 2.5]{G-Matsubara}. 
    It is easy to see that the signature of $\sqrt{-1}H_{11,0\infty}$ is $(1,1)$. 
    Thus, $\sqrt{-1}H_1$ is indefinite for any real parameters. 
  \end{enumerate}  
\end{Rem}

We next define twisted cycles $\ga_{j0}$ and $\ga_{j\infty}$ ($2\leq j \leq n$) in a similar manner to 
that for $\ga_{10}$ and $\ga_{1\infty}$, respectively. 
Let $l_{\infty}^{(j)},l_0^{(j)}$, $m_0^{(j)},m_1^{(j)},m_2^{(j)},m_3^{(j)}$ be $1$-chains drawn in Figure \ref{fig:cycle-j}.
Note that we have $s_j=m_0^{(j)}+e^{\tpi c_j}(m_1^{(j)}+m_2^{(j)}+m_3^{(j)})$. 
We set 
\begin{align*}
  \ga_{j0} 
  &=\reg(t_j,t_j+1) \\
  &=\frac{m_0^{(j)}+e^{\tpi c_j}(m_1^{(j)}+m_2^{(j)}+m_3^{(j)})}{e^{\tpi c_j}-1} +l_0^{(j)} \\
  &\qquad 
    -e^{\tpi c_0}e^{\tpi (-c_1-\dots -c_{j-1})} \cdot \frac{(m_2^{(j)}+m_3^{(j)}+m_0^{(j)})
    +e^{\tpi c_j}m_1^{(j)}}{e^{\tpi c_j}-1} \\
  &=l_0^{(j)}+\frac{1}{e^{\tpi c_j} -1}\Big(
  ( 1-e^{\tpi (c_0-c_1-\cdots -c_{j-1})})m_0^{(j)} 
  +(e^{\tpi c_j}-e^{\tpi (c_0-c_1-\cdots -c_{j-1}+c_j)})m_1^{(j)} \\
  &\qquad \qquad 
    +(e^{\tpi c_j}-e^{\tpi (c_0-c_1-\cdots -c_{j-1})})(m_2^{(j)}+m_3^{(j)})
  \Big) ,\\
  \ga_{j\infty} 
  &=\reg(t_j,t_j+\tau) \\
  &=\frac{m_1^{(j)}+m_2^{(j)}+m_3^{(j)}+m_0^{(j)}}{e^{\tpi c_j}-1} +l_{\infty}^{(j)}\\
  &\qquad  
    -e^{-\tpi c_{\infty}}e^{\tpi (c_1+\dots +c_{j-1})} \cdot 
    \frac{(m_3^{(j)}+m_0^{(j)})+e^{\tpi c_j}(m_1^{(j)}+m_2^{(j)})}{e^{\tpi c_j}-1} \\
  &=l_{\infty}^{(j)}+\frac{1}{e^{\tpi c_j} -1}\Big(
  ( 1-e^{\tpi (-c_{\infty}+c_1+\cdots +c_{j-1})})(m_0^{(j)}+m_3^{(j)}) \\
  &\qquad \qquad 
    +(1-e^{\tpi (-c_{\infty}+c_1+\cdots +c_{j-1}+c_j)})(m_1^{(j)}+m_2^{(j)}) 
    \Big) .
\end{align*}
\begin{figure}
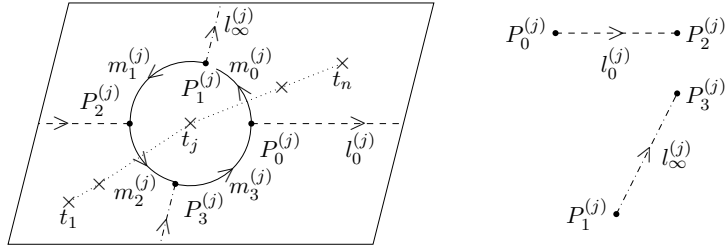

  \centering
  \scalebox{0.8}{\RWpictureB}
  \caption{The twisted cycles $\ga_{j0}$ and $\ga_{j\infty}$}
  \label{fig:cycle-j}
\end{figure}
\begin{Prop}\label{prop:homology-int-diag-0infty}
  % For $j,k\in\{ 1,\dots, n\}$, we have 
  % $I_h ([\ga_{j0}] ,[\ga_{k0}^{\vee}]) =I_h ([\ga_{j\infty}] ,[\ga_{k\infty}^{\vee}]) =0$ if $j\neq k$, and 
  We set $H_{00}=(I_h ([\ga_{j0}] ,[\ga_{k0}^{\vee}]))_{j,k=1,\dots ,n}$ and 
  $H_{\infty \infty}=(I_h ([\ga_{j\infty}] ,[\ga_{k\infty}^{\vee}]))_{j,k=1,\dots ,n}$. 
  These are diagonal matrices whose diagonal entries are 
  \begin{align*}
    I_h ([\ga_{j0}] ,[\ga_{j0}^{\vee}])
    &=-\frac{(e^{\tpi c_0}-e^{\tpi (c_1+\cdots +c_{j-1})})(e^{\tpi c_0}-e^{\tpi (c_1+\cdots +c_j)})}
      {e^{\tpi (c_0+c_1+\cdots +c_{j-1})}(1-e^{\tpi c_j})}, \\
    I_h ([\ga_{j\infty}] ,[\ga_{j\infty}^{\vee}])
    &=-\frac{(e^{\tpi c_{\infty}}-e^{\tpi (c_1+\cdots +c_{j-1})})(e^{\tpi c_{\infty}}
      -e^{\tpi (c_1+\cdots +c_{j})})}{e^{\tpi (c_{\infty}+c_1+\cdots +c_{j-1})}(1-e^{\tpi c_j})}.
      % \frac{(1-e^{\tpi (-c_{\infty}+c_1+\cdots +c_{j-1})})(e^{\tpi c_j}
      % -e^{\tpi (c_{\infty}-c_1-\cdots -c_{j-1})})}{1-e^{\tpi c_j}} .
  \end{align*}
\end{Prop}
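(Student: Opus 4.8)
The plan is to separate the statement into the vanishing of the off-diagonal entries and the evaluation of the diagonal ones. For the off-diagonal part I would argue geometrically. Each cycle $\ga_{j0}$ is supported on a loop winding once in the direction of the period $1$ through a neighborhood of $t_j$, together with the small regularizing arcs $m_0^{(j)},\dots,m_3^{(j)}$ centered at $t_j$. For $j\neq k$ one has $t_j\neq t_k$ in $E$, so by taking the two winding loops at distinct heights and the regularizing circles of sufficiently small radius, the supports of $\ga_{j0}$ and $\ga_{k0}^{\vee}$ (the latter having the same support as $\ga_{k0}$) can be made disjoint in $M$. Hence $I_h([\ga_{j0}],[\ga_{k0}^{\vee}])=0$ for $j\neq k$, so $H_{00}$ is diagonal. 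The identical argument with loops winding in the direction of the period $\tau$ shows that $H_{\infty\infty}$ is diagonal.

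For the diagonal entries I would exploit that $\ga_{j0}$ has, near $t_j$, exactly the same local shape as $\ga_{10}$ has near $t_1$; only the global branch differs. Transporting the branch of $T(u)S(u)$ from the base point along the loop to $t_j$ picks up the extra factor $e^{\tpi(c_1+\cdots+c_{j-1})}$ coming from the points $t_1,\dots,t_{j-1}$, which is precisely why $e^{\tpi c_0}$ is replaced by $e^{\tpi(c_0-c_1-\cdots-c_{j-1})}$ in the coefficients defining $\ga_{j0}$. Consequently the Kita--Yoshida evaluation of $I_h([\ga_{j0}],[\ga_{j0}^{\vee}])$ is the same finite sum as for $I_h([\ga_{10}],[\ga_{10}^{\vee}])$ under the substitution $(c_0,c_1)\mapsto(c_0-c_1-\cdots-c_{j-1},\,c_j)$. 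Applying this substitution to the formula of Fact \ref{fact:homology-intersection-basis} and clearing the common factor via $e^{\tpi c_0}=e^{\tpi(c_0-c_1-\cdots-c_{j-1})}\,e^{\tpi(c_1+\cdots+c_{j-1})}$ yields the stated expression for $I_h([\ga_{j0}],[\ga_{j0}^{\vee}])$. The case of $\ga_{j\infty}$ is handled identically under $(c_{\infty},c_1)\mapsto(c_{\infty}-c_1-\cdots-c_{j-1},\,c_j)$.

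The main obstacle is to make the reduction in the second paragraph fully rigorous rather than heuristic: one must verify that the cyclic arrangement of the arcs $m_0^{(j)},\dots,m_3^{(j)}$ around $t_j$ together with the segment $l_0^{(j)}$ reproduces, intersection point by intersection point, the local multiplicities $I_{\mathrm{loc}}$ and the transported branches of the $j=1$ configuration, so that only the single global factor distinguishes the two sums. If this matching is awkward to state cleanly, I would instead carry out the Kita--Yoshida sum directly: enumerate the transverse intersection points of $\ga_{j0}$ with $\ga_{j0}^{\vee}$ from Figure \ref{fig:cycle-j}, read off the local multiplicity and the branch ratio $(TS)_{\Delta}(u)\,(TS)_{\Delta'}(u)^{-1}$ at each, and sum. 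This is finite and routine once the intersection points are listed, and it is the computation I would defer to \S\ref{subsec:eval-homology-intersection}.
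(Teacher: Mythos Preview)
Your proposal is correct. The off-diagonal vanishing is argued exactly as in the paper: for $j\neq k$ the supports of $\ga_{j0}$ and $\ga_{k0}^{\vee}$ (resp.\ $\ga_{j\infty}$ and $\ga_{k\infty}^{\vee}$) can be taken disjoint, so the intersection number is zero.

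For the diagonal entries the paper does not reduce to the case $j=1$ but carries out the Kita--Yoshida sum directly from the figure for general $j$ (this is precisely your fallback). There are two transverse intersection points between $\ga_{j0}$ and a slightly displaced copy representing $\ga_{j0}^{\vee}$; reading off the coefficients of $m_0^{(j)}$ and $m_1^{(j)}$ from the definition of $\ga_{j0}$, the local signs $\mp 1$, and the branch ratios $1$ and $e^{\tpi(-c_0+c_1+\cdots+c_{j-1})}$ gives the stated formula after simplification (and analogously for $\ga_{j\infty}$). Your primary substitution argument is also valid and slightly more conceptual: the coefficients of $\ga_{j0}$ are, by construction, obtained from those of $\ga_{10}$ under $(c_0,c_1)\mapsto(c_0-c_1-\cdots-c_{j-1},\,c_j)$, and the only nontrivial branch ratio appearing in the sum, $e^{-\tpi c_0}$ at the second intersection point, transforms under the same rule, so the whole sum does. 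The paper's direct computation is more self-contained because it does not presuppose the value of $I_h([\ga_{10}],[\ga_{10}^{\vee}])$ from Fact~\ref{fact:homology-intersection-basis}; your reduction has the advantage of explaining structurally why the answer is the $j=1$ formula with shifted parameters.
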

This implies that $\{ [\ga_{j0}] \}_{j=1,\dots,n}$ (resp. $\{ [\ga_{j\infty}] \}_{j=1,\dots ,n}$) form a basis of 
$H_1(M;\CL_{\la}^{\vee})$ when we assume the conditions not only $c_1,\dots ,c_n\not\in \Z$ but also 
$c_0-c_1-\cdots -c_{j} \not\in \Z$
(resp. $c_{\infty}-c_1-\cdots -c_{j} \not\in \Z$) for $1\leq j \leq n$.

\begin{Rem}
  These additional conditions are indispensable. 
  For example, we can easily verify the relation $[\ga_{20}]-[\ga_{10}]=(e^{\tpi (c_0-c_1)}-1)[\ga_{12}]$
  which implies $[\ga_{20}]=[\ga_{10}]$ if $c_0-c_1\in \Z$. 
\end{Rem}

Let $l_1^{(n)},\dots ,l_{n-1}^{(n)}$ be $1$-chains drawn in Figure \ref{fig:cycle-n}. 
For $1\leq j \leq n-1$, we set 
\begin{align*}
  \ga_{nj}=\frac{s_n}{e^{\tpi c_n}-1} +l_j^{(n)}
  -\frac{e^{\tpi (c_{j+1}+\dots +c_n)}s_j}{e^{\tpi c_j}-1} .
\end{align*}
Note that $[\ga_{nj}]\neq -[\ga_{jn}]$ in general. 
\begin{figure}
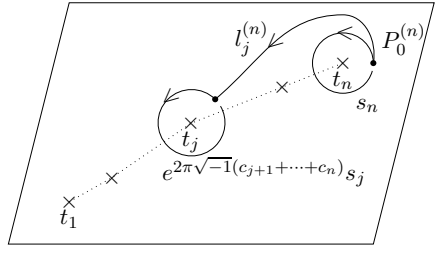

  \centering
  \scalebox{0.8}{\RWpictureC}
  \caption{The twisted cycle $\ga_{nj}$.}
  \label{fig:cycle-n}
\end{figure}
\begin{Prop}\label{prop:homology-int-diag-1n}
  The matrix $H_{1n}=(I_h ([\ga_{1j}] ,[\ga_{nk}^{\vee}]))_{\substack{j=2,\dots ,n-1 ,0,\infty \\ k=2,\dots ,n-1 ,\infty,0}}$
  is a diagonal one whose diagonal entries are given by 
  \begin{align*}
    I_h ([\ga_{1j}] ,[\ga_{nj}^{\vee}])
    &=\frac{e^{\tpi (c_1+\cdots +c_j)}}{1-e^{\tpi c_j}} \quad (j=2,\dots ,n-1) ,\\
    I_h ([\ga_{10}] ,[\ga_{n\infty}^{\vee}])
    &=e^{\tpi (c_n+c_{\infty})} ,\qquad 
      I_h ([\ga_{1\infty}] ,[\ga_{n0}^{\vee}])
      =-e^{-\tpi c_0}. 
  \end{align*}
\end{Prop}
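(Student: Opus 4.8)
The plan is to evaluate every entry of $H_{1n}$ directly from the Kita--Yoshida formula recalled in \S\ref{subsec:homology-intersection-form}, using the explicit chains of $\ga_{1j}$ (Figure \ref{fig:cycle-1}) and of $\ga_{nk}$ (Figure \ref{fig:cycle-n}). Recall that $\ga_{1j}$ (resp.\ $\ga_{10},\ga_{1\infty}$) is a regularized path issuing from $t_1$, supported on the loops $m_0,m_1,m_2,m_3$ about $t_1$, a segment ($l_j$, $l_0$ or $l_\infty$), and the terminal loop about the far endpoint; dually, $\ga_{nk}^{\vee}$ is a regularized path issuing from $t_n$, supported on $s_n$, a segment ($l_k^{(n)}$, $l_0^{(n)}$ or $l_\infty^{(n)}$) and a loop about $t_k$, carrying the branch of $(T(u)S(u))^{-1}$ with all parameters negated. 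The crossed pairing of indices (row $0$ against column $\infty$, row $\infty$ against column $0$) is geometrically natural: the cycles $\ga_{1j},\ga_{10},\ga_{nk},\ga_{n0}$ all run in the $+1$ direction while $\ga_{1\infty},\ga_{n\infty}$ run in the $+\tau$ direction, so on $E$ a transverse crossing can occur only between a $+1$-cycle and a $+\tau$-cycle, or else at a point common to the two regularized paths.

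First I would dispose of the off-diagonal entries by arranging the supports to be disjoint. Placing $t_1,\dots,t_n$ in this order along a real segment, $\ga_{1j}$ lies over $[t_1,t_j]$ and $\ga_{nk}^{\vee}$ over $[t_k,t_n]$, and these two paths share no endpoint unless $j=k$. For $j\neq k$ with $j,k\in\{2,\dots,n-1\}$ I would route the segments $l_j$ and $l_k^{(n)}$ at separated heights above the real axis and take the terminal loops small, so that the supports do not meet at all (in particular the loop about $t_j$ never reaches the strand of $\ga_{nk}^{\vee}$ passing over it, and symmetrically at $t_k$); hence the entry is $0$. The remaining off-diagonal entries vanish for lack of any transverse crossing or common point: the pairs $(0,k),(k,0),(0,0)$ pit a $+1$-cycle against a $+1$-cycle (parallel, hence separable), $(\infty,\infty)$ pits a $+\tau$-cycle against a $+\tau$-cycle, and for $(\infty,k)$ and $(j,\infty)$ the real part of the $+\tau$-loop does not lie in the real-part range of the $+1$-path.

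For the diagonal entries I would carry out the local computation at the single relevant locus. For $j=k\in\{2,\dots,n-1\}$ the paths $\ga_{1j}$ and $\ga_{nj}^{\vee}$ both terminate at $t_j$ from opposite sides, and the contribution comes entirely from the crossings of the two terminal loops about $t_j$, whose coefficients $-1/(e^{\tpi c_j}-1)$ in $\ga_{1j}$ and $-e^{\tpi(c_{j+1}+\cdots+c_n)}/(e^{\tpi c_j}-1)$ in $\ga_{nj}$ are read off from their definitions. This produces the factor $\tfrac{1}{1-e^{\tpi c_j}}$, while transporting the reference branch of $T(u)S(u)$ from $P_0$ to $t_j$ past the punctures $t_1,\dots,t_j$ supplies the prefactor $e^{\tpi(c_1+\cdots+c_j)}$, yielding $I_h([\ga_{1j}],[\ga_{nj}^{\vee}])=\frac{e^{\tpi(c_1+\cdots+c_j)}}{1-e^{\tpi c_j}}$. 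For the crossed pairs $(0,\infty)$ and $(\infty,0)$ the two period loops meet on $E$ in exactly one transverse point; there $I_{\rm loc}=\pm1$, and the branch ratio of the two strands, after substituting $c_{\infty}=-\la-c_0\tau-c_1t_1-\cdots-c_nt_n$, evaluates to $e^{\tpi(c_n+c_\infty)}$ and $-e^{-\tpi c_0}$, respectively.

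The main difficulty is bookkeeping rather than conceptual: one must fix a branch of $T(u)S(u)$ at $P_0$, propagate it consistently along each $l_k^{(n)}$ and around every loop (picking up the factors $e^{\tpi c_i}$ and the $+\tau$-monodromy responsible for $e^{\tpi c_\infty}$), and track orientations to fix the signs of $I_{\rm loc}$. The most delicate point is the off-diagonal vanishing for $2\le k<j\le n-1$, where the underlying intervals overlap and one must check that the chosen routing genuinely separates the supports (equivalently, that the would-be loop--strand contributions cancel). As consistency checks I would use the involution identity $I_h([\ga],[\de^{\vee}])=-I_h([\de],[\ga^{\vee}])^{\vee}$ of \S\ref{subsec:homology-intersection-form} and compare degenerate cases against Fact \ref{fact:homology-intersection-basis} and Corollary \ref{cor:homology-intersection-interval}, the full computation being deferred to \S\ref{subsec:eval-homology-intersection}.
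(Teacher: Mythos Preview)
Your approach is essentially the paper's: it too dispenses with the off-diagonal entries by observing that the supports can be made disjoint (the paper simply says ``it is clear that the intersection matrix becomes diagonal''), and then evaluates each diagonal entry as a single Kita--Yoshida contribution. One correction to your local picture at $t_j$ for $j\in\{2,\dots,n-1\}$: the contribution is not from a crossing of the two terminal loops (two small circles about $t_j$ can be taken concentric and hence disjoint) but from the single transverse crossing of the loop $s_j$ in $\ga_{1j}$ with the segment $l_j^{(n)}$ in $\ga_{nj}^{\vee}$, giving $\frac{-1}{e^{\tpi c_j}-1}\cdot 1\cdot e^{-\tpi(c_{j+1}+\cdots+c_n)}$, where the branch factor is read along $l_j^{(n)}$ from $P_0^{(n)}$ and your $e^{\tpi(c_1+\cdots+c_j)}$ then follows from $c_1+\cdots+c_n=0$.
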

% This implies that $\{ [\ga_{nj}] \}_{j= 2,\dots,n-1,0,\infty}$ also give a basis 
% of $H_1(M;\CL_{\la}^{\vee})$ under the condition $c_1,\dots ,c_n\not\in \Z$. 
Thus, we can express an arbitrary twisted cycle $\ga$ as a linear combination of 
the basis $\{ [\ga_{1j}] \}_{j= 2,\dots,n-1,0,\infty}$ by using intersection numbers: 
\begin{align*}
  [\ga]
  &=\sum_{k=2}^{n-1} \frac{I_h([\ga] ,[\ga_{nk}^{\vee}])}{I_h([\ga_{1k}] ,[\ga_{nk}^{\vee}])} [\ga_{1k}]
  +\frac{I_h([\ga] ,[\ga_{n\infty}^{\vee}])}{I_h([\ga_{10}] ,[\ga_{n\infty}^{\vee}])} [\ga_{10}]
  +\frac{I_h([\ga] ,[\ga_{n0}^{\vee}])}{I_h([\ga_{1\infty}] ,[\ga_{n0}^{\vee}])} [\ga_{1\infty}] \\
  &=\sum_{k=2}^{n-1} \frac{1-e^{\tpi c_j}}{e^{\tpi (c_1+\cdots +c_j)}}I_h([\ga] ,[\ga_{nk}^{\vee}])[\ga_{1k}]
  +e^{\tpi (-c_n-c_{\infty})}I_h([\ga] ,[\ga_{n\infty}^{\vee}]) [\ga_{10}]
  -e^{\tpi c_0} I_h([\ga] ,[\ga_{n0}^{\vee}]) [\ga_{1\infty}].
\end{align*}
For example, if we set $\ga =\ga_{1n}$, then this expression yields the relation 
(\ref{eq:homology-basis-relation}).

\subsection{Monodromy and connection problem}\label{subsec:monodromy}
In this section, we consider monodromy and connection problems by moving $t_p$'s. 
Though some of the representation matrices for the basis $\{ [\ga_{1j}] \}_{j= 2,\dots,n-1,0,\infty}$ are 
given in \cite[\S 6]{Mano}, we reconsider them by using the intersection form. 
To describe the move of $t_p$'s, we set 
\begin{align*}
  \FT=\{ (t_1,\dots ,t_n)\in \C^n \mid t_j-t_k \not\in \La_{\tau}  \ (j\neq k) \} .
\end{align*}
% \begin{align*}
%   \FT=\{ (t_1,\dots ,t_n)\in E^n  \mid t_j \neq t_k \ (j\neq k) \}, \quad 
%   \tilde{\FT}=\{ (t_1,\dots ,t_n)\in \C^n \mid t_j-t_k \not\in \La_{\tau}  \ (j\neq k) \} .
% \end{align*}
% There exists a natural projection $\tilde{\FT}\to \FT$. 
We choose a base point $\bft^{\circ}=(t_1^{\circ},\dots ,t_n^{\circ})\in \FT$ such that 
each $t_j^{\circ}$ belongs to $P=\{ a +b\tau \mid 0\leq a,b < 1 \}$. 
% For simplicity, we also write $\bft^{\circ}$ for 
% its image under the natural projection.

\subsubsection{Monodromy}
For $1\leq p<q \leq n$, let $\ell_{pq}$ be a loop in 
\begin{align*}
  \{(t_1^{\circ},\dots ,t_{p-1}^{\circ},s,t_{p+1}^{\circ},\dots ,t_n^{\circ})\in \FT \mid 
  s\in P ,\ s\neq t_j^{\circ} \ (j\neq p) \} \subset \FT
\end{align*}
with terminal $\bft^{\circ}$ starting from $s=t_p^{\circ}$ approaching to $t_q^{\circ}$ 
via the right side of the branch cut, turning around this point counterclockwisely, and 
tracing back to $t_p^{\circ}$ (see Figure \ref{fig:loop-pq}). 
The loop $\ell_{pq}$ naturally induces a linear automorphism 
$\ell_{pq*}:H_1(M;\CL_{\la}^{\vee})\to H_1(M;\CL_{\la}^{\vee})$
which is called the circuit transformation. 
We give an expression of $\ell_{pq*}$ by using the intersection form.  
% though $\ell_{jk*}$ can be expressed as a composition of some interchangings $t_j \leftrightarrow t_{j+1}$, 
% which will be discussed in \S \ref{subsubsec:interchanging}. 
% Because the expression here has simple form. 
\begin{figure}
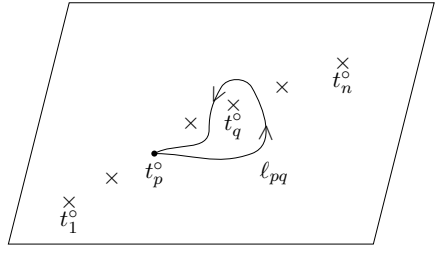

  \centering
  \scalebox{0.8}{\RWpictureD}
  \caption{The loop $\ell_{pq}$ (in the $s$-coordinate).}
  \label{fig:loop-pq}
\end{figure}

\begin{Th}\label{th:monodromy}
  For an arbitrary $[\ga] \in H_1(M;\CL_{\la}^{\vee})$, we have
  \begin{align*}
    \ell_{pq*} ([\ga]) =[\ga] - (1-e^{\tpi c_p})(1-e^{\tpi c_q}) I_h ([\ga] ,[\ga_{pq}^{\vee}]) [\ga_{pq}]. 
  \end{align*}
\end{Th}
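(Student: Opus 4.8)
The plan is to reduce the global statement to a local, Picard--Lefschetz--type computation, and then to convert the resulting coefficient into the intersection-theoretic form by evaluating a single self-intersection. The key observation is that the loop $\ell_{pq}$ moves the point $t_p$ around $t_q$, so its effect on twisted cycles should be supported near the segment joining $t_p$ and $t_q$, i.e.\ near $\ga_{pq}=\reg(t_p,t_q)$. I expect $\ell_{pq*}$ to fix any twisted cycle that can be represented away from this segment, and to act on cycles crossing it by adding a multiple of $[\ga_{pq}]$. Concretely, I would first show that $\ell_{pq*}([\ga])-[\ga]$ is always a scalar multiple of $[\ga_{pq}]$, so that the map has the form $\ell_{pq*}([\ga])=[\ga]-\kappa([\ga])\,[\ga_{pq}]$ for some linear functional $\kappa$ on $H_1(M;\CL_\la^\vee)$.

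The main step is to identify $\kappa$. Since $\{[\ga_{1j}]\}$ gives a basis and the intersection form $I_h$ is non-degenerate, the functional $\kappa$ must be expressible as $\kappa([\ga])=\mu\cdot I_h([\ga],[\de^\vee])$ for some fixed dual cycle $[\de^\vee]\in H_1(M;\CL_\la)$ and some scalar $\mu$. The geometry of the loop forces $\de$ to be the cycle detecting transversal crossing of the segment $(t_p,t_q)$, which is again $\ga_{pq}$; thus I would argue $\kappa([\ga])=\mu\, I_h([\ga],[\ga_{pq}^\vee])$. To pin down $\mu$, I would apply the formula to the single cycle $\ga=\ga_{pq}$ itself. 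On one hand, the local Picard--Lefschetz analysis of a loop encircling $t_q$ (together with the branch contributions $e^{\tpi c_p}$ and $e^{\tpi c_q}$ picked up along the two ends of the regularized segment) gives an explicit self-transformation $\ell_{pq*}([\ga_{pq}])=[\ga_{pq}]-\nu[\ga_{pq}]$ with a computable $\nu$. On the other hand, the proposed formula gives $\nu=(1-e^{\tpi c_p})(1-e^{\tpi c_q})\,I_h([\ga_{pq}],[\ga_{pq}^\vee])\cdot\mu/\nu_0$; comparing with the self-intersection
\[
  I_h([\ga_{pq}],[\ga_{pq}^\vee])=\frac{e^{\tpi(c_p+c_q)}}{(1-e^{\tpi c_p})(1-e^{\tpi c_q})}
\]
from Corollary \ref{cor:homology-intersection-interval} fixes $\mu=1$ and yields exactly the stated coefficient $(1-e^{\tpi c_p})(1-e^{\tpi c_q})$.

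The hardest part will be the local computation establishing that $\ell_{pq*}([\ga])-[\ga]$ is a multiple of $[\ga_{pq}]$ together with the precise scalar on $\ga_{pq}$, because this requires careful bookkeeping of how the branch of $T(u)S(u)$ is transported as $s=t_p$ circles $t_q$, and how the regularization chains $m_i, s_j, l_j$ deform under this motion. I would handle this by restricting attention to a small disk around $t_q$ containing the relevant end of $\ga_{pq}$, reducing to the classical one-variable monodromy of $\vth_1(u-t_p)^{c_p}\vth_1(u-t_q)^{c_q}$, where the factor $(1-e^{\tpi c_p})(1-e^{\tpi c_q})$ arises naturally from the two twisted endpoints. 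Once the shape $[\ga]-\kappa([\ga])[\ga_{pq}]$ with $\kappa$ proportional to $I_h(\,\cdot\,,[\ga_{pq}^\vee])$ is secured geometrically, the determination of all constants is purely algebraic, using only the non-degeneracy of $I_h$ and the already-computed self-intersection of $\ga_{pq}$.
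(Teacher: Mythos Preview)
Your strategy is essentially the same as the paper's: the paper packages the geometric step into Lemma~\ref{lem:monodromy-eigen}, showing that $[\ga_{pq}]$ is an eigenvector of $\ell_{pq*}$ with eigenvalue $e^{\tpi(c_p+c_q)}$ and that the orthogonal complement $\ga_{pq}^{\perp}$ (with respect to $I_h(\,\cdot\,,[\ga_{pq}^{\vee}])$) is exactly the eigenspace with eigenvalue $1$, the latter proved by explicitly exhibiting $n-1$ invariant cycles whose supports avoid the path of $t_p$; the formula then follows from the projection onto $[\ga_{pq}]$ along $\ga_{pq}^{\perp}$, just as you outline. One small arithmetic slip: the self-intersection you quote, $\frac{e^{\tpi(c_p+c_q)}}{(1-e^{\tpi c_p})(1-e^{\tpi c_q})}$, does not give $\mu=1$; the value that makes the normalization work is $I_h([\ga_{pq}],[\ga_{pq}^{\vee}])=\frac{1-e^{\tpi(c_p+c_q)}}{(1-e^{\tpi c_p})(1-e^{\tpi c_q})}$ (cf.\ Fact~\ref{fact:homology-intersection-basis}), which together with the eigenvalue $e^{\tpi(c_p+c_q)}$ yields the stated coefficient.
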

The proof of this theorem is quite similar to that of \cite[Theorem 5.1]{M-FD}, since 
there are few differences between $\P^1$ and $E$ 
as far as we consider the loop $\ell_{pq}$.
\begin{Lem}\label{lem:monodromy-eigen}
  \begin{enumerate}
  \item $[\ga_{pq}]$ is an eigenvector of $\ell_{pq*}$ with eigenvalue $e^{\tpi (c_p +c_q)}$. 
  \item We set $\ga_{pq}^{\perp}=\{ [\ga]\in H_1(M;\CL_{\la}^{\vee}) \mid I_h([\ga],[\ga_{pq}^{\vee}])=0 \}$. 
    Then $\dim (\ga_{pq}^{\perp})=n-1$. 
  \item Further, $\ga_{pq}^{\perp}$ coincides with the eigenspace of $\ell_{pq*}$ with 
    eigenvalue one. 
  \end{enumerate}
\end{Lem}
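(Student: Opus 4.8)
The whole lemma is a formal consequence of the reflection-type formula in Theorem \ref{th:monodromy}, combined with the non-degeneracy of $I_h$ and the value of the self-intersection number of $\ga_{pq}$; the plan is to read off each of the three assertions from that single identity. Throughout I treat the $c_j$ as indeterminates (equivalently, I assume $c_p+c_q\notin\Z$ in addition to $c_1,\dots,c_n\notin\Z$), so that the scalars $1-e^{\tpi c_p}$, $1-e^{\tpi c_q}$ and $1-e^{\tpi(c_p+c_q)}$ are all nonzero.

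For (1), I would substitute $[\ga]=[\ga_{pq}]$ into Theorem \ref{th:monodromy}. The only extra input needed is the self-intersection number
\[
  I_h([\ga_{pq}],[\ga_{pq}^{\vee}])=\frac{1-e^{\tpi(c_p+c_q)}}{(1-e^{\tpi c_p})(1-e^{\tpi c_q})},
\]
which is provided by Fact \ref{fact:homology-intersection-basis} when $p=1$ and by Corollary \ref{cor:homology-intersection-interval} when $p\ge 2$. Plugging this in, the factor $(1-e^{\tpi c_p})(1-e^{\tpi c_q})$ cancels and the formula collapses to $\ell_{pq*}([\ga_{pq}])=[\ga_{pq}]-(1-e^{\tpi(c_p+c_q)})[\ga_{pq}]=e^{\tpi(c_p+c_q)}[\ga_{pq}]$. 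Since the self-intersection is nonzero we have $[\ga_{pq}]\ne 0$, so this is a genuine eigenvector with eigenvalue $e^{\tpi(c_p+c_q)}$.

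For (2) and (3) I would argue through the linear functional $\phi([\ga])=I_h([\ga],[\ga_{pq}^{\vee}])$ on the $n$-dimensional space $H_1(M;\CL_{\la}^{\vee})$, whose kernel is by definition $\ga_{pq}^{\perp}$. Because $I_h([\ga_{pq}],[\ga_{pq}^{\vee}])\ne 0$, the class $[\ga_{pq}^{\vee}]$ is nonzero, so by non-degeneracy of $I_h$ the functional $\phi$ is not identically zero; hence $\dim\ga_{pq}^{\perp}=\dim\ker\phi=n-1$, which is (2). For (3), Theorem \ref{th:monodromy} shows that $\ell_{pq*}([\ga])-[\ga]$ is always a scalar multiple of $[\ga_{pq}]$, with scalar $-(1-e^{\tpi c_p})(1-e^{\tpi c_q})\phi([\ga])$. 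Thus $\ell_{pq*}([\ga])=[\ga]$ holds if and only if that scalar vanishes; since $[\ga_{pq}]\ne 0$ and $(1-e^{\tpi c_p})(1-e^{\tpi c_q})\ne 0$, this occurs exactly when $\phi([\ga])=0$, i.e.\ exactly when $[\ga]\in\ga_{pq}^{\perp}$. This is precisely the asserted coincidence of the eigenspace for eigenvalue $1$ with $\ga_{pq}^{\perp}$.

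There is no serious obstacle here: once Theorem \ref{th:monodromy} and the self-intersection value are in hand, the three statements are immediate linear algebra. The only points needing care are the nonvanishing of $1-e^{\tpi c_p}$, $1-e^{\tpi c_q}$, $1-e^{\tpi(c_p+c_q)}$ and of $[\ga_{pq}]$, which is why I work with generic $c_j$. Together, parts (1) and (3) exhibit $\ell_{pq*}$ as a complex reflection: it fixes the hyperplane $\ga_{pq}^{\perp}$ pointwise and multiplies the line $\C[\ga_{pq}]$ by $e^{\tpi(c_p+c_q)}$, and since this eigenvalue differs from $1$ under our genericity, the two eigenspaces are complementary and $\ell_{pq*}$ is diagonalizable.
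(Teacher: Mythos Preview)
Your argument is circular. In the paper, Lemma \ref{lem:monodromy-eigen} is stated and proved \emph{before} Theorem \ref{th:monodromy}, and the proof of Theorem \ref{th:monodromy} explicitly invokes Lemma \ref{lem:monodromy-eigen} (``By using Lemma \ref{lem:monodromy-eigen} and Corollary \ref{cor:homology-intersection-interval}, we can show the theorem\dots''). So you cannot take the reflection formula of Theorem \ref{th:monodromy} as input to derive the lemma; that is exactly the implication the paper is working toward. What you have shown is only that the two statements are equivalent once one of them is established --- which is true but does not constitute a proof of either.

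The paper's actual proof of the lemma is geometric and independent of Theorem \ref{th:monodromy}: for (1) one tracks directly how the loop $\ell_{pq}$ deforms the support and branch of $\ga_{pq}$ (as in \cite[Lemma 5.1]{M-FD}); for (2) one uses non-degeneracy for the inequality $\dim(\ga_{pq}^{\perp})\le n-1$ and then \emph{exhibits} $n-1$ explicit linearly independent cycles orthogonal to $\ga_{pq}$ (with separate case analyses depending on whether $q=n$, $(p,q)=(1,n)$, or $n=2$); for (3) one checks directly that those explicit cycles are fixed by $\ell_{pq*}$, and concludes since $\ell_{pq*}\ne\id$. Note also that your blanket assumption $c_p+c_q\notin\Z$ is not available in general: when $n=2$ the constraint $c_1+c_2=0$ forces $e^{\tpi(c_1+c_2)}=1$, and the paper treats this case separately (there $\ga_{12}$ itself lies in $\ga_{12}^{\perp}$).
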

\begin{proof}
  \begin{enumerate}[(1)]
  \item We can prove this claim in a similar way to the proof of \cite[Lemma 5.1]{M-FD}. 
  \item 
    Since the non-degenerate property of the intersection form implies $\dim (\ga_{pq}^{\perp})\leq n-1$, 
    it suffices to find $n-1$ linearly independent twisted cycles belonging to $\ga_{pq}^{\perp}$.
    First, we assume $n\geq 3$. 
    If $q\neq n$, 
    %(resp. $p\neq 1$), 
    then $n-1$ cycles 
    $\{[\ga_{nj}]\}_{j=1,\dots,\check{p},\dots,\check{q},\dots ,n-1,0,\infty}$
    % (resp. $\{[\ga_{1j}]\}_{j=1,\dots,\check{p},\dots,\check{q},\dots ,n-1,0,\infty}$)
    belong to $\ga_{pq}^{\perp}$. 
    If $q=n$ and $p\neq 1$, then 
    $\{ [\ga_{n1}-\ga_{nj}] \}_{j=2,\dots,\check{p},\dots,n-1}\cup \{ [\ga_{10}],[\ga_{1\infty}]\}$
    belong to $\ga_{pq}^{\perp}$. 
    Thus, in these cases, we can easily check $\dim (\ga_{pq}^{\perp})=n-1$. 
    We consider the case when $(p,q)=(1,n)$. 
    We set $\ga_{20}^{(1n)}=e^{\tpi c_1}\ga_{20} + (e^{\tpi c_1}-1)\ga_{12}$ and 
    $\ga_{2\infty}^{(1n)}=\ga_{2\infty} - e^{-\tpi c_{\infty}}(e^{\tpi c_1}-1)\ga_{12}$ 
    which are homologous to the twisted cycles drawn in Figure \ref{fig:cycle-avoid}. 
    \begin{figure}
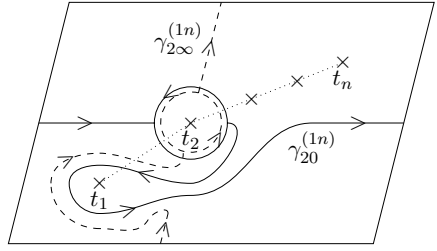

      \centering
      \scalebox{0.8}{\RWpictureE}
      \caption{The cycle drawn by solid (resp. dashed) line is homologous to $\ga_{20}^{(1n)}$ (resp. $\ga_{2\infty}^{(1n)}$)}
      \label{fig:cycle-avoid}
    \end{figure}
    It is not difficult to show that 
    $\{ [\ga_{2j}] \}_{j=3,\dots ,n-1}\cup \{ [\ga_{20}^{(1n)}],[\ga_{2\infty}^{(1n)}]\}$ are linearly independent 
    and belong to $\ga_{1n}^{\perp}$. 
    Thus, we obtain $\dim (\ga_{1n}^{\perp})=n-1$. 
    \\
    Next, we consider the case when $n=2$ and $(p,q)=(1,2)$. 
    In this case, we have $e^{\tpi (c_1+c_2)}=1$, and hence $\ga_{12}$ itself belongs to $\ga_{12}^{\perp}$
    by Fact \ref{fact:homology-intersection-basis}. 
    Thus, $\ga_{12}^{\perp}$ has a positive dimension, 
    % Since the intersection form is non-degenerate, we have $\dim (\ga_{12}^{\perp})<2$. 
    which implies $\dim (\ga_{12}^{\perp})=1$. 
  \item 
    If $n\geq 3$, it is clear that 
    each of the above bases of $\ga_{pq}^{\perp}$ is not changed under $\ell_{pq*}$. 
    Thus, $\ga_{pq}^{\perp}$ is contained in the eigenspace of $\ell_{pq*}$ with eigenvalue one. 
    Since $\ell_{pq*}$ is not the identity map, we obtain the claim. 
    \\
    When $n=2$ and $(p,q)=(1,2)$, we have $\ga_{12}^{\perp} =\C \cdot [\ga_{12}]$ by the proof of (2). 
    Thus, $e^{\tpi (c_1+c_2)}=1$ and (1) shows the claim (3). 
    \qedhere
  \end{enumerate}
\end{proof}
\begin{proof}[Proof of Theorem \ref{th:monodromy}]
  By using Lemma \ref{lem:monodromy-eigen} and Corollary \ref{cor:homology-intersection-interval}, 
  we can show the theorem in the same way as the proof of \cite[Theorem 5.1]{M-FD}. 
\end{proof}
Applying Theorem \ref{th:monodromy}, we can obtain the representation matrix $M_{pq}$ of $\ell_{pq*}$
with respect to the basis $\{ [\ga_{1j}] \}_{j= 2,\dots,n-1,0,\infty}$. 
\begin{Cor}
  We set 
  \begin{align*}
    \bfv_1&=\tp{(0,\dots,0)}, \quad 
            \bfv_j=\tp{(}0,\dots,0,\overset{(j-1)\text{-th}}{1},0,\dots,0) \ (j\neq 1,n) ,\\
    % \\
    \bfv_n&=\frac{1}{1-e^{\tpi c_n}}\tp{\big(}
            -e^{-\tpi (c_1+c_2)}(1-e^{\tpi c_2}) ,\dots ,-e^{-\tpi (c_1+\cdots+c_{n-1})}(1-e^{\tpi c_{n-1}}),\\
          &\qquad \qquad \qquad \qquad \qquad 
            1-e^{-\tpi c_{\infty}}, e^{\tpi c_0}-1
            \big), 
  \end{align*}
  and $\bfv_{jk}=\bfv_k -\bfv_j$. 
  Then $M_{pq}:=\id_n -(1-e^{\tpi c_p})(1-e^{\tpi c_q}) \bfv_{pq} \tp{\bfv_{pq}^{\vee}}\tp{H_{11}}$ satisfies
  \begin{align}
    \label{eq:rep-mat-ellpq}
    &\ell_{pq*}([\ga_{12}],\dots ,[\ga_{1,n-1}],[\ga_{10}],[\ga_{1\infty}])
      =([\ga_{12}],\dots ,[\ga_{1,n-1}],[\ga_{10}],[\ga_{1\infty}])M_{pq}, \\
    \label{eq:relation-mat-ellpq}
    &H_{11}=\tp{M_{pq}} H_{11} M_{pq}^{\vee}, 
  \end{align}
  where $\id_n$ is the unit matrix of size $n$ and $H_{11}$ is the intersection matrix defined 
  in Fact \ref{fact:homology-intersection-basis}. 
\end{Cor}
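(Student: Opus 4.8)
The plan is to obtain (\ref{eq:rep-mat-ellpq}) by feeding each basis cycle into Theorem \ref{th:monodromy} and reading off the columns of the representation matrix, and then to deduce (\ref{eq:relation-mat-ellpq}) from the monodromy-invariance of the intersection pairing. First I would fix the dictionary between twisted cycles and coordinate vectors. Write $\Phi=([\ga_{12}],\dots,[\ga_{1,n-1}],[\ga_{10}],[\ga_{1\infty}])$ for the chosen basis, so that $[\ga]=\Phi\bfv$ means $\bfv$ is the coordinate column of $[\ga]$. By construction each $\bfv_j$ is the coordinate column of $[\ga_{1j}]$: for $j\in\{2,\dots,n-1,0,\infty\}$ this is a unit vector, $\bfv_1=\zero$ because $\ga_{11}=\reg(t_1,t_1)$ is trivial, and $\bfv_n$ is exactly the expression of $[\ga_{1n}]$ obtained by solving the relation (\ref{eq:homology-basis-relation}) for $[\ga_{1n}]$ (using $c_1+\dots+c_n=0$). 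Hence $\bfv_{pq}=\bfv_q-\bfv_p$ is the coordinate column of $[\ga_{pq}]=\ga_{1q}-\ga_{1p}$, i.e.\ $\Phi\bfv_{pq}=[\ga_{pq}]$. Moreover, for $[\ga]=\Phi\bfv$, bilinearity of $I_h$ together with the definition of $H_{11}$ gives $I_h([\ga],[\ga_{pq}^\vee])=\tp{\bfv}H_{11}\bfv_{pq}^\vee$, the involution $\vee$ entering because passing to the dual cycle applies $\vee$ to the coordinates.

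For (\ref{eq:rep-mat-ellpq}) I would then put $\bfv=\bfv_j$ into Theorem \ref{th:monodromy}, obtaining $\ell_{pq*}([\ga_{1j}])=\Phi\bigl(\bfv_j-(1-e^{\tpi c_p})(1-e^{\tpi c_q})(\tp{\bfv_j}H_{11}\bfv_{pq}^\vee)\,\bfv_{pq}\bigr)$. Since $\tp{\bfv_j}H_{11}\bfv_{pq}^\vee$ is the $j$-th entry of the column $H_{11}\bfv_{pq}^\vee$, collecting these images as the columns of the representation matrix turns the correction term into the rank-one matrix $\bfv_{pq}\tp{(H_{11}\bfv_{pq}^\vee)}=\bfv_{pq}\tp{\bfv_{pq}^\vee}\tp{H_{11}}$, while the leading terms assemble to $\id_n$. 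This is precisely the stated $M_{pq}$, proving (\ref{eq:rep-mat-ellpq}).

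For (\ref{eq:relation-mat-ellpq}) I would use that $\ell_{pq*}$ is an isometry of the intersection form. The same loop $\ell_{pq}$ induces the monodromy on $H_1(M;\CL_\la)$; applying Theorem \ref{th:monodromy} with the $\vee$-involuted parameters, its matrix in the dual basis $\{[\ga_{1j}^\vee]\}$ is $M_{pq}^\vee$. Because the intersection pairing is flat along $\ell_{pq}$, one has $I_h(\ell_{pq*}[\ga],\ell_{pq*}[\de^\vee])=I_h([\ga],[\de^\vee])$ for all classes, and reading this in coordinates gives $\tp{M_{pq}}H_{11}M_{pq}^\vee=H_{11}$. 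As an independent check one may instead expand $\tp{M_{pq}}H_{11}M_{pq}^\vee$ directly: writing $\kappa=(1-e^{\tpi c_p})(1-e^{\tpi c_q})$ and using the anti-Hermitian relation $\tp{H_{11}}=-H_{11}^\vee$ coming from \S\ref{subsec:homology-intersection-form}, the product collapses to $H_{11}+(\kappa^\vee-\kappa-\kappa\kappa^\vee\beta)\,H_{11}\bfv_{pq}^\vee\tp{\bfv_{pq}}H_{11}$, where $\beta=\tp{\bfv_{pq}}H_{11}\bfv_{pq}^\vee=I_h([\ga_{pq}],[\ga_{pq}^\vee])$. Thus the claim reduces to the scalar identity $\beta=\tfrac1\kappa-\tfrac1{\kappa^\vee}$; since $\kappa^\vee=e^{-\tpi(c_p+c_q)}\kappa$, the right-hand side equals $\frac{1-e^{\tpi(c_p+c_q)}}{\kappa}$, which matches the value of $I_h([\ga_{pq}],[\ga_{pq}^\vee])$ computed from Fact \ref{fact:homology-intersection-basis} by bilinearity.

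The main obstacle I anticipate is bookkeeping rather than depth: a single misplaced transpose or involution $\vee$ destroys the rank-one identity in (\ref{eq:rep-mat-ellpq}), and in (\ref{eq:relation-mat-ellpq}) the reduction to a scalar depends on using $\tp{H_{11}}=-H_{11}^\vee$ and on matching the self-intersection $I_h([\ga_{pq}],[\ga_{pq}^\vee])$ with $\tfrac1\kappa-\tfrac1{\kappa^\vee}$ exactly. This last matching is the one place where the antisymmetry of $H_{11}$ is genuinely used, and it is where I would concentrate the verification.
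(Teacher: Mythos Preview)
Your proposal is correct and follows the same approach as the paper: identifying $\bfv_{pq}$ as the coordinate vector of $[\ga_{pq}]$ via (\ref{eq:homology-basis-relation}), rewriting the scalar $I_h([\ga],[\ga_{pq}^\vee])=\tp{\bfv}H_{11}\bfv_{pq}^\vee=\tp{\bfv_{pq}^\vee}\tp{H_{11}}\bfv$ to assemble the rank-one correction in $M_{pq}$, and invoking monodromy invariance of $I_h$ for (\ref{eq:relation-mat-ellpq}). Your additional direct expansion of $\tp{M_{pq}}H_{11}M_{pq}^\vee$ using $\tp{H_{11}}=-H_{11}^\vee$ and the scalar identity $I_h([\ga_{pq}],[\ga_{pq}^\vee])=\tfrac{1}{\kappa}-\tfrac{1}{\kappa^\vee}$ is a correct supplementary verification not present in the paper.
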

\begin{proof}
  By definition and (\ref{eq:homology-basis-relation}), we have
  $[\ga_{pq}]=([\ga_{12}],\dots ,[\ga_{1\infty}])\bfv_{pq}$. 
  If $[\ga]$ is expressed as $[\ga]=([\ga_{12}],\dots ,[\ga_{1\infty}])\bfv$ for some $\bfv$, then 
  the intersection number $I_h([\ga],[\ga_{pq}^{\vee}])$ is given by 
  $\tp{\bfv}H_{11}\bfv_{pq}^{\vee}=\tp{\bfv_{pq}^{\vee}}\tp{H_{11}}\bfv$. 
  This implies that $I_h ([\ga] ,[\ga_{pq}^{\vee}]) [\ga_{pq}]$ can be expressed as 
  $([\ga_{12}],\dots ,[\ga_{1\infty}])\bfv_{pq} \tp{\bfv_{pq}^{\vee}}\tp{H_{11}}\bfv$, 
  and hence we obtain (\ref{eq:rep-mat-ellpq}) by Theorem \ref{th:monodromy}. 
  The equality (\ref{eq:relation-mat-ellpq}) follows from the monodromy invariant property 
  $I_h([\ga],[\de])=I_h(\ell_{pq*}([\ga]),\ell_{pq*}^{\vee}([\de]))$, 
  where $\ell_{pq*}^{\vee}:H_1(M;\CL_{\la})\to H_1(M;\CL_{\la})$ is the circuit transformation. 
\end{proof}

\subsubsection{Translating $t_p$ to $t_p +1$}\label{subsubsec:connection+1}
% In this and the next subsections, according to \cite{Mano}, we consider
% \begin{align*}
%   \tilde{T}(t_1,\dots,t_n;u)
%   = e^{\tpi c_0 u}\left(\frac{\vth_1(u-t_1)}{\vth_1(-t_1)}\right)^{c_1} \cdots 
%   \left(\frac{\vth_1(u-t_n)}{\vth_1(-t_n)}\right)^{c_n}
%   =\frac{T(t_1,\dots,t_n;u)}{\vth_1(-t_1)^{c_1}\cdots \vth_1(-t_n)^{c_n}}
% \end{align*}
% instead of $T(u)=T(t_1,\dots,t_n;u)$.
For $p=1,\dots ,n$, let $\ell_{p0}$ be a path  
\begin{align*}
  [0,1]\ni s \mapsto (t_1^{\circ},\dots ,t_{p-1}^{\circ},t_{p}^{\circ}+s,t_{p+1}^{\circ},\dots ,t_n^{\circ})\in \FT
\end{align*}
from $\bft^{\circ}$ to 
$\bft^{\circ}_{p0}=(t_1^{\circ},\dots ,t_{p-1}^{\circ},t_{p}^{\circ}+1,t_{p+1}^{\circ},\dots ,t_n^{\circ})$
(see Figure \ref{fig:path-p0}). 
\begin{figure}
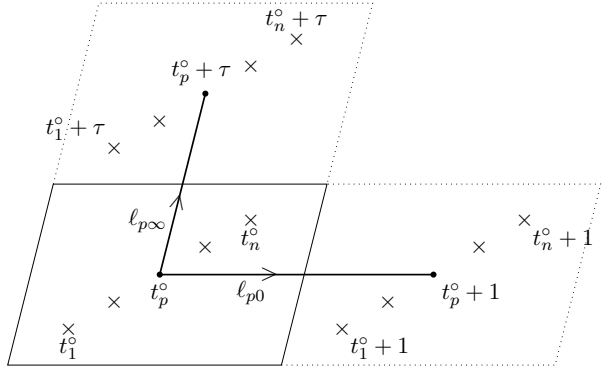

  \centering
  \scalebox{0.8}{\RWpictureF}
  \caption{The paths $\ell_{p0}$ and $\ell_{p\infty}$.}
  \label{fig:path-p0}
\end{figure}
Following \cite{Mano}, we consider $c_{\infty}\in \C$ as a constant, 
and we move $s$ from $0$ to $1$ keeping the constraint 
$\la +c_0 \tau +c_1 t_1 +\cdots +c_n t_n +c_{\infty}=0$. 
When $s$ is $1$, the local system $\CL_{\la}^{\vee}$ changes into 
$\CL_{\la}^{(p0)\vee} :=\C T(u)|_{t_p \to t_p +1} \ot R_{\la -c_p}^{\vee}$. 
Thus, the loop $\ell_{p0}$ induces a linear map 
$\ell_{p0*}:H_1(M;\CL_{\la}^{\vee})\to H_1(M;\CL_{\la}^{(p0)\vee})$.
Since we have $T(u)|_{t_p \to t_p +1}=e^{-\pi \sqrt{-1} c_p} T(u)$, 
the monodromy structure of $\CL_{\la}^{(p0)\vee}$ as a local system on $M$ 
is same as that of 
$\CL_{\la}^{\vee}|_{\la \to \la -c_p}$. 
Note that the parameter $c_{\infty}$ for $\CL_{\la}^{\vee}|_{\la \to \la -c_p}$ is %changed into 
$c_{\infty}'=-(\la -c_p)-c_0\tau -c_1 t_1 -\cdots -c_n t_n =c_{\infty}+c_p$. 

For a twisted cycle $\ga \in Z_1 (M;\CL_{\la}^{\vee})$, 
we can construct a twisted cycle $\ga'\in Z_1 (M;\CL_{\la}^{(p0)\vee})$ in the same manner as $\ga$. 
The ``connection problem'' in this paper is to express $\ell_{p0*}([\ga])$'s 
in terms of $[\de']$'s. 

\begin{Lem}\label{lem:eigen-ellj0}
  We have
  \begin{align*}
    \ell_{p0*}([\ga_{j0}])=
    \begin{cases}
      [\ga_{j0}'] & (j<p) \\
      e^{\tpi (c_0-c_1-\dots -c_{p-1})}[\ga_{p0}'] & (j=p) \\
      e^{\tpi c_p}[\ga_{j0}'] & (j>p) .
    \end{cases}
  \end{align*}
\end{Lem}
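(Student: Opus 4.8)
The plan is to compute $\ell_{p0*}([\ga_{j0}])$ by tracking the continuous deformation of the regularized cycle $\ga_{j0}=\reg(t_j,t_j+1)$ --- presented as in Figure~\ref{fig:cycle-j} by the horizontal segment $l_0^{(j)}$ together with the regularizing arcs $m_0^{(j)},m_1^{(j)},m_2^{(j)},m_3^{(j)}$ around $t_j$ --- while the point $t_p$ is carried along the path $\ell_{p0}$ of Figure~\ref{fig:path-p0} and the branch of $T(u)S(u)$ on the support is analytically continued. Two elementary rules govern the bookkeeping: each transverse crossing of the branch cut issuing from $t_k$ multiplies the loaded chain by $e^{\pm\tpi c_k}$, and closing up a loop around the $1$-cycle contributes the factor prescribed by $\vth_1(u+1)=-\vth_1(u)$ and by $e^{\tpi c_0 u}$, simplified using $c_1+\cdots+c_n=0$. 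The target cycle $\ga_{j0}'$ is the cycle defined in the same way in the configuration $\bft^{\circ}_{p0}$ with the shifted parameter $\la\to\la-c_p$, as set up before the statement. The three cases of the lemma correspond precisely to the position of $t_j$ relative to the horizontal trajectory of $t_p$.

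When $j=p$, the cycle $\ga_{p0}$ is itself the regularized horizontal loop based at $t_p$, so carrying $t_p$ once around the $1$-cycle drags $\ga_{p0}$ onto the loop based at $t_p+1$, that is onto $\ga_{p0}'$; the only change is that the branch selected at the base point is replaced by its continuation once around the loop, which multiplies the chain by the loop monodromy $e^{\tpi(c_0-c_1-\cdots-c_{p-1})}$ --- exactly the factor already appearing in the coefficients that define $\ga_{p0}$. When $j>p$, the support of $\ga_{j0}$ lies to the right of $t_p$, and as $t_p$ sweeps rightward its branch cut crosses that support exactly once; hence $\ell_{p0*}([\ga_{j0}])$ equals $[\ga_{j0}']$ times the single crossing factor $e^{\tpi c_p}$. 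When $j<p$, the support of $\ga_{j0}$ lies to the left of $t_p$, and the claim is that the crossings incurred as $t_p$ wraps through the fundamental domain are compensated by the shift $\la\to\la-c_p$ built into $\ga_{j0}'$, so that $\ell_{p0*}([\ga_{j0}])=[\ga_{j0}']$ with trivial factor.

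To make this rigorous I would, in each case, isotope the trajectory of $t_p$ so that it meets the support of $\ga_{j0}$ transversally in the fewest points, record the branch of $T(u)S(u)$ across each crossing, and then verify that the deformed loaded chain is literally a scalar multiple of $\ga_{j0}'$, i.e.\ that the arcs $m_i^{(j)}$ are carried to those of $\ga_{j0}'$ with matching relative coefficients rather than producing a more complicated combination. The main obstacle is the sign-and-multiplicity bookkeeping: one must fix the directions of the branch cuts and the orientation conventions so that the exponents come out exactly as $c_p$ for $j>p$ and as $c_0-c_1-\cdots-c_{p-1}$ for $j=p$, and --- most delicately --- confirm that in the case $j<p$ the crossings genuinely cancel against $\la\to\la-c_p$ instead of leaving a residual factor. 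This cancellation is exactly what separates the cases $j<p$ and $j>p$, so I would verify it first in the smallest situation (for instance $n=2$, $p=1$, via the relation $[\ga_{20}]-[\ga_{10}]=(e^{\tpi(c_0-c_1)}-1)[\ga_{12}]$ recorded in the Remark) before treating a general configuration.
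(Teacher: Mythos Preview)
Your plan is in the right spirit but overcomplicates what the paper does in two sentences. The paper's argument is simply: since $t_p+1\equiv t_p$ on $E$, the deformation $\ell_{p0}$ returns every puncture to its original position, so the \emph{support} of $\ell_{p0*}(\ga_{j0})$ is literally the support of $\ga_{j0}'$; moreover the relative coefficients on the pieces $l_0^{(j)},m_0^{(j)},\dots,m_3^{(j)}$ are dictated by the local monodromies of the local system, which are identical for $\CL_\la^{\vee}$ and $\CL_\la^{(p0)\vee}$. Hence $\ell_{p0*}(\ga_{j0})$ and $\ga_{j0}'$ differ by a single global scalar, namely the ratio of the two branches of $T(u)S(u)$ at the reference point $P_0^{(j)}$. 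There is no need to isotope anything or to count crossings of a moving branch cut with the support; one only analytically continues the \emph{value} $T(P_0^{(j)})$ in the parameter $t_p$ and compares it with the branch convention for $\ga_{j0}'$. This immediately dissolves your worry that the arcs $m_i^{(j)}$ might be carried to ``a more complicated combination''.

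Two specific points where your heuristic would mislead you if pursued. First, your phrase ``lies to the right of $t_p$'' does not match the configuration in Figures~\ref{fig:cycle-1}--\ref{fig:cycle-j}: the ordering of the $t_k$ is by height (imaginary part), and the branch cuts run in the $\tau$-direction, so the $j<p$ versus $j>p$ asymmetry is about whether the vertical cut from $t_p$ sweeps through the horizontal loop at height $t_j$, not about left/right position. Second, your explanation of the $j<p$ case as ``crossings compensated by the shift $\la\to\la-c_p$'' cannot be right: the section $S(u)$ of $R_\la$ has trivial monodromy in the $1$-direction, so changing $\la$ does not affect the branch along $\ga_{j0}$ at all. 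The trivial factor for $j<p$ must instead come out of the direct branch comparison at $P_0^{(j)}$. Since you explicitly defer that computation (``the main obstacle is the sign-and-multiplicity bookkeeping''), what remains is exactly the content of the lemma; carrying it out via the paper's one-point comparison is both shorter and avoids these pitfalls.
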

\begin{proof}
  It is clear that the support of $\ell_{p0*}([\ga_{j0}])$ is same as that of $[\ga_{j0}']$. 
  Thus, it is sufficient to compare the branches at the point $P_0^{(j)}$. 
  This is not so difficult. 
\end{proof}

\begin{Th}
  Assume 
  $c_j,c_0-c_1-\cdots -c_{j} \not\in \Z$ ($j=1,\dots ,n$). 
  The linear map $\ell_{p0*}$ is expressed as 
  \begin{align*}
    \ell_{p0*}([\ga])
    &=\sum_{j=1}^{p-1} \frac{I_h ([\ga] ,[\ga_{j0}^{\vee}])}{I_h ([\ga_{j0}] ,[\ga_{j0}^{\vee}])}[\ga_{j0}'] \\
    &+e^{\tpi (c_0-c_1-\dots -c_{p-1})}\frac{I_h ([\ga] ,[\ga_{p0}^{\vee}])}{I_h ([\ga_{p0}] ,[\ga_{p0}^{\vee}])}[\ga_{p0}']
    +e^{\tpi c_p}\sum_{j=p+1}^{n} \frac{I_h ([\ga] ,[\ga_{j0}^{\vee}])}{I_h ([\ga_{j0}] ,[\ga_{j0}^{\vee}])}[\ga_{j0}'] \\
    &=-\sum_{j=1}^{p-1} 
      \frac{e^{\tpi (c_0+c_1+\cdots +c_{j-1})}(1-e^{\tpi c_j})}
      {(e^{\tpi c_0}-e^{\tpi (c_1+\cdots +c_{j-1})})(e^{\tpi c_0}-e^{\tpi (c_1+\cdots +c_j)})}
      I_h ([\ga] ,[\ga_{j0}^{\vee}])[\ga_{j0}'] \\
    &-\frac{e^{4\pii c_0}(1-e^{\tpi c_p})}
      {(e^{\tpi c_0}-e^{\tpi (c_1+\cdots +c_{p-1})})(e^{\tpi c_0}-e^{\tpi (c_1+\cdots +c_p)})}
      I_h ([\ga] ,[\ga_{p0}^{\vee}])[\ga_{p0}']\\
    &-e^{\tpi c_p}\sum_{j=p+1}^{n} 
      \frac{e^{\tpi (c_0+c_1+\cdots +c_{j-1})}(1-e^{\tpi c_j})}
      {(e^{\tpi c_0}-e^{\tpi (c_1+\cdots +c_{j-1})})(e^{\tpi c_0}-e^{\tpi (c_1+\cdots +c_j)})}
      I_h ([\ga] ,[\ga_{j0}^{\vee}])[\ga_{j0}'].
  \end{align*}
\end{Th}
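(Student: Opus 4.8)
The plan is to exploit the diagonality of the matrix $H_{00}$ from Proposition \ref{prop:homology-int-diag-0infty} to obtain a basis-free expansion formula, and then transport it through $\ell_{p0*}$ via Lemma \ref{lem:eigen-ellj0}. First I would verify the hypotheses needed for the machinery: under the assumption $c_j,\,c_0-c_1-\cdots-c_j \not\in \Z$ for all $j$, the discussion following Proposition \ref{prop:homology-int-diag-0infty} guarantees that $\{[\ga_{j0}]\}_{j=1,\dots,n}$ is a basis of $H_1(M;\CL_{\la}^{\vee})$, and moreover every diagonal entry $I_h([\ga_{j0}],[\ga_{j0}^{\vee}])$ is nonzero (both factors in the numerator of its explicit value are nonzero precisely under these conditions). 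This is the only place the arithmetic hypotheses are used.

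Next I would write an arbitrary class as $[\ga]=\sum_{k=1}^n \la_k [\ga_{k0}]$. Pairing with $[\ga_{j0}^{\vee}]$ and using that $H_{00}$ is \emph{diagonal}, all cross terms vanish, leaving $I_h([\ga],[\ga_{j0}^{\vee}]) = \la_j\, I_h([\ga_{j0}],[\ga_{j0}^{\vee}])$. Solving for $\la_j$ yields the key expansion
\begin{align*}
  [\ga]=\sum_{j=1}^n \frac{I_h([\ga],[\ga_{j0}^{\vee}])}{I_h([\ga_{j0}],[\ga_{j0}^{\vee}])}\,[\ga_{j0}],
\end{align*}
which is exactly the $0$-indexed analogue of the expansion formula already displayed at the end of \S\ref{subsec:monodromy} using $H_{1n}$. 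This step is essentially the same orthogonality trick used there; the point is that $\{\ga_{j0}\}$ is its own dual-orthogonal family, so no inversion of a full Gram matrix is required.

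Then I would apply $\ell_{p0*}$ to both sides by $\C$-linearity and substitute the three cases from Lemma \ref{lem:eigen-ellj0}, splitting the sum into $j<p$, $j=p$, and $j>p$. The factors $1$, $e^{\tpi(c_0-c_1-\cdots-c_{p-1})}$, and $e^{\tpi c_p}$ coming from the lemma attach to the corresponding terms, producing precisely the first displayed equality of the theorem (with $[\ga_{j0}]$ replaced by $[\ga_{j0}']$ on the target side). Here I would be careful to note that the intersection numbers $I_h([\ga],[\ga_{j0}^{\vee}])$ are all computed in the \emph{source} pairing $H_1(M;\CL_{\la}^{\vee})\times H_1(M;\CL_{\la})$ before the translation, so no compatibility question between the two local systems $\CL_{\la}^{\vee}$ and $\CL_{\la}^{(p0)\vee}$ arises --- the expansion is performed entirely upstairs and only the resulting basis vectors are pushed forward. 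This bookkeeping about which homology group each symbol lives in is the one genuinely delicate point, though it is conceptual rather than computational.

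Finally, the second displayed equality is a direct substitution of the closed form $I_h([\ga_{j0}],[\ga_{j0}^{\vee}]) = -\dfrac{(e^{\tpi c_0}-e^{\tpi(c_1+\cdots+c_{j-1})})(e^{\tpi c_0}-e^{\tpi(c_1+\cdots+c_j)})}{e^{\tpi(c_0+c_1+\cdots+c_{j-1})}(1-e^{\tpi c_j})}$ from Proposition \ref{prop:homology-int-diag-0infty} into the denominators; taking reciprocals gives the stated coefficients, and in the $j=p$ term the extra factor $e^{\tpi(c_0-c_1-\cdots-c_{p-1})}$ combines with $e^{\tpi(c_0+c_1+\cdots+c_{p-1})}$ to yield the $e^{4\pii c_0}$ in the numerator. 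I expect no real obstacle beyond this routine simplification; the substantive content is entirely carried by the diagonality of $H_{00}$ and by Lemma \ref{lem:eigen-ellj0}, both of which are available.
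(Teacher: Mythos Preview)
Your proposal is correct and follows essentially the same approach as the paper: use the diagonality of $H_{00}$ from Proposition \ref{prop:homology-int-diag-0infty} to write the orthogonal expansion $[\ga]=\sum_{j=1}^{n} \frac{I_h ([\ga] ,[\ga_{j0}^{\vee}])}{I_h ([\ga_{j0}] ,[\ga_{j0}^{\vee}])}[\ga_{j0}]$, then apply Lemma \ref{lem:eigen-ellj0} termwise and substitute the explicit self-intersections. The paper's own proof is just these two sentences; your additional remarks on where the hypotheses enter and on the source-versus-target bookkeeping are accurate elaborations of the same argument.
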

\begin{proof}
  By Proposition \ref{prop:homology-int-diag-0infty}, any element $[\ga]\in H_1(M;\CL_{\la}^{\vee})$
  can be expressed as 
  \begin{align*}
    [\ga]=\sum_{j=1}^{n} \frac{I_h ([\ga] ,[\ga_{j0}^{\vee}])}{I_h ([\ga_{j0}] ,[\ga_{j0}^{\vee}])}[\ga_{j0}] .
  \end{align*}
  Thus, Lemma \ref{lem:eigen-ellj0} yields the theorem. 
\end{proof}
As a corollary, we obtain the representation matrix with respect to the basis 
$\{ [\ga_{1j}] \}_{j= 2,\dots,n-1,0,\infty}$ and $\{ [\ga_{1j}'] \}_{j= 2,\dots,n-1,0,\infty}$. 
We use the intersection matrices $H_{00}$ defined in Proposition \ref{prop:homology-int-diag-0infty}, 
$H_{1n}$ defined in Proposition \ref{prop:homology-int-diag-1n}, and 
$H_{10}=(I_h([\ga_{1j}],[\ga_{k0}^{\vee}]))_{\substack{j=2,\dots ,n-1 ,0,\infty \\ k=1,\dots ,n}}$, 
$H_{0n}=(I_h([\ga_{j0}],[\ga_{nk}^{\vee}]))_{\substack{j=1,\dots ,n \\ k=2,\dots ,n-1 ,\infty,0}}$.
\begin{Cor}[\cite{Ghazouani-Pirio}, \cite{Mano}]
  For a matrix $H$ whose entries belong to $\C(e^{\tpi c_{*}})$, 
  we set $H^{(p0)}=H|_{c_{\infty}\to c_{\infty}+c_p}$. 
  The matrix defined by
  \begin{align*}
    M_{p0}=(H_{1n}^{-1} \tp{H_{0n}})^{(p0)} \cdot 
    \diag (1,\dots ,1 ,\overset{\text{$p$-th}}{e^{\tpi (c_0-c_1-\dots -c_{p-1})}} ,e^{\tpi c_p} ,\dots ,e^{\tpi c_p})
    \cdot H_{00}^{-1} \tp{H_{10}} 
  \end{align*}
  satisfies the following equalities: 
  \begin{align}
    \label{eq:rep-mat-ellp0}
    &\ell_{p0*}([\ga_{12}],\dots ,[\ga_{1,n-1}],[\ga_{10}],[\ga_{1\infty}])
    =([\ga_{12}'],\dots ,[\ga_{1,n-1}'],[\ga_{10}'],[\ga_{1\infty}'])M_{p0}, \\
    \label{eq:relation-mat-ellp0}
    &H_{11}=\tp{M_{p0}} \cdot H_{11}^{(p0)} \cdot M_{p0}^{\vee}.
  \end{align}
\end{Cor}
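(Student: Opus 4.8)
The plan is to realize $M_{p0}$ as a product of three matrices, each encoding one step of the composite description of $\ell_{p0*}$, and then to read off (\ref{eq:relation-mat-ellp0}) from the invariance of the intersection form. The starting point is the formula for $\ell_{p0*}([\ga])$ established in the preceding Theorem, together with the diagonal pairings of Propositions \ref{prop:homology-int-diag-0infty} and \ref{prop:homology-int-diag-1n}.

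First I would expand the source basis in terms of $\{[\ga_{j0}]\}_{j=1,\dots,n}$. Since $H_{00}$ is diagonal (Proposition \ref{prop:homology-int-diag-0infty}), every class satisfies $[\ga]=\sum_{j=1}^n \frac{I_h([\ga],[\ga_{j0}^\vee])}{I_h([\ga_{j0}],[\ga_{j0}^\vee])}[\ga_{j0}]$; applying this to $[\ga]=[\ga_{1k}]$ and using the definition of $H_{10}$ gives the change-of-basis matrix $A=H_{00}^{-1}\tp{H_{10}}$, i.e. $([\ga_{12}],\dots,[\ga_{1\infty}])=([\ga_{10}],\dots,[\ga_{n0}])A$. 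Lemma \ref{lem:eigen-ellj0} then shows that $\ell_{p0*}$ sends the ordered family $([\ga_{10}],\dots,[\ga_{n0}])$ to $([\ga_{10}'],\dots,[\ga_{n0}'])D$, where $D=\diag(1,\dots,1,e^{\tpi(c_0-c_1-\cdots-c_{p-1})},e^{\tpi c_p},\dots,e^{\tpi c_p})$ is exactly the middle factor of $M_{p0}$. Finally I would re-expand the primed classes $\{[\ga_{j0}']\}$ in the primed target basis: because the primed cycles live in $\CL_\la^{(p0)\vee}$, whose monodromy agrees with that of $\CL_\la^\vee|_{\la\to\la-c_p}$ (equivalently $c_\infty\to c_\infty+c_p$), the diagonal pairing of Proposition \ref{prop:homology-int-diag-1n} against $\{(\ga_{nk}')^\vee\}$ yields $([\ga_{10}'],\dots,[\ga_{n0}'])=([\ga_{12}'],\dots,[\ga_{1\infty}'])B$ with $B=(H_{1n}^{-1}\tp{H_{0n}})^{(p0)}$. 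Composing the three steps gives $\ell_{p0*}([\ga_{12}],\dots,[\ga_{1\infty}])=([\ga_{12}'],\dots,[\ga_{1\infty}'])BDA$, so that $M_{p0}=BDA$, which is precisely (\ref{eq:rep-mat-ellp0}).

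For (\ref{eq:relation-mat-ellp0}) I would invoke the invariance of the intersection pairing. The path $\ell_{p0}$ is an isotopy that returns $t_p$ to $t_p+1\equiv t_p$ in $E$, so it induces not only $\ell_{p0*}$ but also a dual transformation $\ell_{p0*}^\vee:H_1(M;\CL_\la)\to H_1(M;\CL_\la^{(p0)})$, whose matrix with respect to $\{[\ga_{1j}^\vee]\}$ and $\{[(\ga_{1j}')^\vee]\}$ is $M_{p0}^\vee$. Since intersection numbers are preserved along the isotopy, $I_h([\ga_{1j}],[\ga_{1k}^\vee])=I_h(\ell_{p0*}([\ga_{1j}]),\ell_{p0*}^\vee([\ga_{1k}^\vee]))$; expanding both arguments in the primed bases and using $I_h([\ga_{1l}'],[(\ga_{1m}')^\vee])=(H_{11}^{(p0)})_{lm}$ converts this into the matrix identity $H_{11}=\tp{M_{p0}}H_{11}^{(p0)}M_{p0}^\vee$, exactly in parallel with the monodromy case (\ref{eq:relation-mat-ellpq}).

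The routine part is the bookkeeping of indices, where the only genuine pitfall is the $0\leftrightarrow\infty$ transposition between the row- and column-orderings of $H_{1n}$ and $H_{0n}$ (reflecting that $[\ga_{10}]$ pairs with $[\ga_{n\infty}^\vee]$ and $[\ga_{1\infty}]$ with $[\ga_{n0}^\vee]$); this must be tracked carefully so that $B$ comes out as $(H_{1n}^{-1}\tp{H_{0n}})^{(p0)}$ and not its transpose. I expect the main conceptual obstacle to be justifying the invariance used for (\ref{eq:relation-mat-ellp0}): since $\ell_{p0}$ changes the local system rather than merely the underlying cycles, one must verify that the pairing deforms compatibly across $\CL_\la^\vee$ and $\CL_\la^{(p0)\vee}$, i.e. that the $(p0)$-substitution $c_\infty\to c_\infty+c_p$ is precisely the effect of the isotopy on the intersection form and that $\ell_{p0*}^\vee$ is the correctly matched dual transformation.
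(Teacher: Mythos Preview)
Your proposal is correct and follows essentially the same route as the paper: factor $M_{p0}$ as the change of basis from $\{[\ga_{1j}]\}$ to $\{[\ga_{j0}]\}$ (which the paper expresses via $(H_{1n}^{-1}\tp{H_{0n}})^{-1}=H_{00}^{-1}\tp{H_{10}}$), then the diagonal action of $\ell_{p0*}$ on $\{[\ga_{j0}]\}$ from Lemma \ref{lem:eigen-ellj0}, then the $(p0)$-shifted change of basis back. For (\ref{eq:relation-mat-ellp0}) the paper likewise invokes $I_h([\ga],[\de])=I_h^{(p0)}(\ell_{p0*}([\ga]),\ell_{p0*}^{\vee}([\de]))$, exactly the invariance you identify.
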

\begin{proof}
  The diagonal matrix in the definition of $M_{p0}$ is nothing but the representation matrix 
  of $\ell_{p0*}$ with respect to the basis 
  $\{ [\ga_{0j}] \}_{j= 1,\dots,n}$ and $\{ [\ga_{0j}'] \}_{j= 1,\dots,n}$. 
  Since $H_{1n}$ and $H_{00}$ are diagonal, it is easy to see that 
  \begin{align*}
    ([\ga_{10}],\dots ,[\ga_{n0}])=([\ga_{12}],\dots ,[\ga_{1,n-1}],[\ga_{10}],[\ga_{1\infty}])H_{1n}^{-1} \tp{H_{0n}}
  \end{align*}
  and $(H_{1n}^{-1} \tp{H_{0n}})^{-1}=H_{00}^{-1} \tp{H_{10}}$. 
  We thus obtain the equality (\ref{eq:rep-mat-ellp0}). 
  Next, we show (\ref{eq:relation-mat-ellp0}). 
  Let $I_h^{(p0)}$ be the intersection form defined on $H_1(M;\CL_{\la}^{(p0)\vee})\times H_1(M;\CL_{\la}^{(p0)})$, 
  and $\ell_{p0*}^{\vee}:H_1(M;\CL_{\la})\to H_1(M;\CL_{\la}^{(p0)})$ be the linear map 
  induced by $\ell_{p0}$. 
  By definition of the intersection form, we have 
  $I_h([\ga],[\de])=I_h^{(p0)}(\ell_{p0*}([\ga]),\ell_{p0*}^{\vee}([\de]))$. 
  This implies the equality (\ref{eq:relation-mat-ellp0}). 
\end{proof}
Explicit expressions of $H_{10}$ and $H_{0n}$ are given in \S \ref{subsubsec:homology-int-matrices}. 
Since the inverse matrices of $H_{1n}$ and $H_{00}$ are easily obtained, 
each matrix in the definition of $M_{p0}$ has an explicit formula. 

\begin{Rem}
  On the level of integrals, the above discussion is interpreted as follows. 
  We set $p=1$ for simplicity. 
  By deforming a twisted cycle $\ga$, the integral (\ref{eq:RW-integral}) changes into 
  % \\
  \begin{align*}
    % \nonumber
    &\int_{\ell_{10*}(\ga)} e^{\tpi c_0 u}\vth_1(u-t_1-1)^{c_1}\vth_1(u-t_2)^{c_2} \cdots \vth_1(u-t_n)^{c_n} 
    \frs(u-t'_j;\la-c_1)du \\
    % \label{eq:RW-int-deform+1}
    &=e^{-\pi \sqrt{-1} c_1} 
      \int_{\ell_{10*}(\ga)} e^{\tpi c_0 u}\vth_1(u-t_1)^{c_1}\vth_1(u-t_2)^{c_2} \cdots \vth_1(u-t_n)^{c_n} 
      \frs(u-t'_j;\la-c_1)du , 
  \end{align*}
  where $t_j'=t_j+\de_{1j}$. 
  We rewrite the deformed cycle $\ell_{10*}(\ga)$ by using twisted cycles whose coefficients
  are in the local system defined by this integrand. 
  To avoid the constant $e^{-\pi \sqrt{-1} c_1}$, Mano uses the integrand $\tilde{\Phi}_j(w)$ in \cite{Mano} 
  instead of $T(u)\frs(u-t_j;\la)$. 
  % If we rewrite the integral (\ref{eq:RW-int-deform+1}) as 
  % \begin{align*}
  %   e^{-\pi \sqrt{-1} c_1} 
  %     \int_{\ell_{10*}(\ga)} e^{\tpi c_0 u}\vth_1(u-t_1)^{c_1} \cdots \vth_1(u-t_n)^{c_n} 
  %     \frac{\frs(u-t'_j;\la-c_1)}{\frs(u-t_j;\la)}\cdot \frs(u-t_j;\la)du, 
  % \end{align*}
  % then we can use the same representative of the twisted cohomology group. 
  % This implies $\CL_{\la}^{(10)}\simeq \C (T(u) \cdot \frac{\frs(u-t'_j;\la-c_1)}{\frs(u-t_j;\la)})\ot R_{\la}^{\vee}$. 
  For $\ell_{j\infty*}$ in \S \ref{subsubsec:connection+tau}, 
  we can also give a similar interpretation.
\end{Rem}

\subsubsection{Translating $t_p$ to $t_p +\tau$}\label{subsubsec:connection+tau}
For $p=1,\dots ,n$, let $\ell_{p\infty}$ be a path  
\begin{align*}
  [0,1]\ni s \mapsto (t_1^{\circ},\dots ,t_{p-1}^{\circ},t_{p}^{\circ}+s\tau,t_{p+1}^{\circ},\dots ,t_n^{\circ})\in \FT
\end{align*}
from $\bft^{\circ}$ to 
$\bft^{\circ}_{p\infty}=(t_1^{\circ},\dots ,t_{p-1}^{\circ},t_{p}^{\circ}+\tau,t_{p+1}^{\circ},\dots ,t_n^{\circ})$
(see Figure \ref{fig:path-p0}). 
% \begin{figure}
%   \centering
%   \scalebox{0.8}{\RWpictureG}
%   \fbox{\textcolor{red}{PICTURE}}
%   \caption{The path $\ell_{p\infty}$.}
%   \label{fig:path-jinfty}
% \end{figure}
Similarly to \S \ref{subsubsec:connection+1}, 
by considering the constraint 
$\la +c_0 \tau +c_1 t_1 +\cdots +c_n t_n +c_{\infty}=0$, 
we can obtain a linear map
$\ell_{p\infty *}:H_1(M;\CL_{\la}^{\vee})\to H_1(M;\CL_{\la}^{(p\infty)\vee})$, 
where $\CL_{\la}^{(p\infty)\vee} :=\C T(u)|_{t_p \to t_p +\tau} \ot R_{\la -c_p \tau}^{\vee}$. 
Since we have $T(u)|_{t_p \to t_p +\tau}=e^{\pi \sqrt{-1} c_p(\tau -2t_p+1)}\cdot e^{\tpi c_p u} T(u)$, 
the monodromy structure of $\CL_{\la}^{(p\infty)\vee}$ as a local system on $M$ 
is same as that of 
$\CL_{\la}^{\vee}|_{(c_0 ,\la) \to (c_0+c_p ,\la -c_p \tau )}$. 
Note that the parameter $c_{\infty}$ is not changed by this replacing.

For a twisted cycle $\ga \in Z_1 (M;\CL_{\la}^{\vee})$, 
we also write $\ga'\in Z_1 (M;\CL_{\la}^{(p\infty)\vee})$ for a twisted cycle constructed 
in the same manner as $\ga$. 
The following lemma, theorem, and corollary can be shown similarly to \S \ref{subsubsec:connection+1}. 

\begin{Lem}\label{lem:eigen-elljinfty}
  We have
  \begin{align*}
    \ell_{p\infty *}([\ga_{j\infty}])=
    \begin{cases}
      [\ga_{j\infty}'] & (j<p) \\
      e^{\tpi (-c_{\infty}+c_1+\dots +c_{p-1})}[\ga_{p\infty}'] & (j=p) \\
      e^{-\tpi c_p}[\ga_{j\infty}'] & (j>p) .
    \end{cases}
  \end{align*}
\end{Lem}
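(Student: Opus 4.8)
The plan is to imitate the proof of Lemma~\ref{lem:eigen-ellj0} line by line, replacing the translation $t_p\to t_p+1$ by $t_p\to t_p+\tau$ and accounting for the extra quasi-periods that $T(u)$ and $R_{\la}$ acquire in the $\tau$-direction. First I would settle the supports. In the fundamental parallelogram $P$ the chain $\ga_{j\infty}=\reg(t_j,t_j+\tau)$ runs parallel to the side $\tau$, and the base point travels along $t_p^{\circ}+s\tau$, which is parallel to it as well. Hence for $j\neq p$ the moving point never sweeps across the support of $\ga_{j\infty}$, so $\ell_{p\infty *}(\ga_{j\infty})$ is homotopic rel.\ branch data to $\ga_{j\infty}'$; for $j=p$ the cycle is dragged together with its endpoint once around the $\tau$-cycle and, since $t_p+\tau\equiv t_p$ on $E$, returns to the support of $\ga_{p\infty}'$. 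In all cases it then suffices to compare the branch of $T(u)S(u)$ with that of its primed counterpart at the reference point $P_0^{(j)}$ of Figure~\ref{fig:cycle-j}.

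The one analytic ingredient needed is the $\tau$-monodromy of $T(u)S(u)$. Using $\vth_1(v+\tau)=e^{-\pii(\tau+2v+1)}\vth_1(v)$ together with $c_1+\cdots+c_n=0$, all the $\tau$-, $u$- and constant terms cancel in $\prod_j[\vth_1(u+\tau-t_j)/\vth_1(u-t_j)]^{c_j}$, leaving
\begin{align*}
  \frac{T(u+\tau)}{T(u)}=e^{\tpi c_0\tau}e^{\tpi\sum_j c_j t_j},\qquad
  \frac{S(u+\tau)}{S(u)}=e^{\tpi\la},
\end{align*}
so that $(TS)(u+\tau)/(TS)(u)=e^{\tpi(\la+c_0\tau+\sum_j c_j t_j)}=e^{-\tpi c_{\infty}}$ by the definition of $c_{\infty}$.

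With this in hand the three cases are pure bookkeeping of phases. For $j<p$ the moving cut of $t_p$ stays on the far side of $\ga_{j\infty}$ and the branch is unchanged, giving eigenvalue $1$; for $j>p$ the cut sweeps past $t_j$ once, contributing the factor $e^{-\tpi c_p}$ coming from the local monodromy of $\vth_1(u-t_p)^{c_p}$ around $t_p$. The main case, and the only real obstacle, is $j=p$: here the branch accumulates both the full $\tau$-monodromy $e^{-\tpi c_{\infty}}$ computed above and the crossing factors $e^{\tpi c_k}$ for the points $t_1,\dots,t_{p-1}$ whose cuts are swept while $t_p$ climbs, yielding $e^{\tpi(-c_{\infty}+c_1+\cdots+c_{p-1})}$. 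Getting the list of swept cuts and the orientation of each crossing right is the delicate part, but it is the exact mirror of extracting $e^{\tpi(c_0-c_1-\cdots-c_{p-1})}$ in Lemma~\ref{lem:eigen-ellj0}, with $c_0$ replaced by $-c_{\infty}$ since the displacement is now along $\tau$ rather than along $1$, and it requires no idea beyond careful tracking of quasi-periods.
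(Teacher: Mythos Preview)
Your proposal is correct and follows exactly the approach the paper intends: the paper merely says this lemma ``can be shown similarly to \S\ref{subsubsec:connection+1}'', whose proof of Lemma~\ref{lem:eigen-ellj0} is itself just ``compare the branches at the point $P_0^{(j)}$; this is not so difficult''. Your write-up supplies more detail than the paper does---the $\tau$-monodromy computation and the case-by-case bookkeeping---but the strategy is identical.
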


\begin{Th}
  Assume 
  $c_j,c_{\infty}-c_1-\cdots -c_{j} \not\in \Z$ ($j=1,\dots ,n$). 
  The linear map $\ell_{p \infty *}$ is expressed as 
  \begin{align*}
    \ell_{p\infty *}([\ga])
    &=\sum_{j=1}^{p-1} \frac{I_h ([\ga] ,[\ga_{j\infty}^{\vee}])}
      {I_h ([\ga_{j\infty}] ,[\ga_{j\infty}^{\vee}])}[\ga_{j\infty}'] \\
    &+e^{\tpi (-c_{\infty}+c_1+\dots +c_{p-1})}\frac{I_h ([\ga] ,[\ga_{p\infty}^{\vee}])}
      {I_h ([\ga_{p\infty}] ,[\ga_{p\infty}^{\vee}])}[\ga_{p\infty}']
    +e^{-\tpi c_p}\sum_{j=p+1}^{n} \frac{I_h ([\ga] ,[\ga_{j\infty}^{\vee}])}
      {I_h ([\ga_{j\infty}] ,[\ga_{j\infty}^{\vee}])}[\ga_{j\infty}'] \\
    &=-\sum_{j=1}^{p-1} 
      \frac{e^{\tpi (c_{\infty}+c_1+\cdots +c_{j-1})}(1-e^{\tpi c_j})}
      {(e^{\tpi c_{\infty}}-e^{\tpi (c_1+\cdots +c_{j-1})})(e^{\tpi c_{\infty}}-e^{\tpi (c_1+\cdots +c_{j})})}
      I_h ([\ga] ,[\ga_{j\infty}^{\vee}])[\ga_{j\infty}'] \\
    &-\frac{e^{4\pii (c_1+\dots +c_{p-1})}(1-e^{\tpi c_p})}
      {(e^{\tpi c_{\infty}}-e^{\tpi (c_1+\cdots +c_{p-1})})(e^{\tpi c_{\infty}}-e^{\tpi (c_1+\cdots +c_{p})})}
      I_h ([\ga] ,[\ga_{p\infty}^{\vee}])[\ga_{p\infty}']\\
    &-e^{-\tpi c_p}\sum_{j=p+1}^{n} 
      \frac{e^{\tpi (c_{\infty}+c_1+\cdots +c_{j-1})}(1-e^{\tpi c_j})}
      {(e^{\tpi c_{\infty}}-e^{\tpi (c_1+\cdots +c_{j-1})})(e^{\tpi c_{\infty}}-e^{\tpi (c_1+\cdots +c_{j})})}
      I_h ([\ga] ,[\ga_{j\infty}^{\vee}])[\ga_{j\infty}'].
  \end{align*}
\end{Th}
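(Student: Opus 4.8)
The plan is to follow verbatim the strategy used for $\ell_{p0*}$ in \S\ref{subsubsec:connection+1}, replacing throughout the cycles $\ga_{j0}$ by $\ga_{j\infty}$ and the matrix $H_{00}$ by $H_{\infty\infty}$. The starting observation is that, by Proposition~\ref{prop:homology-int-diag-0infty}, the intersection matrix $H_{\infty\infty}=(I_h([\ga_{j\infty}],[\ga_{k\infty}^{\vee}]))_{j,k=1,\dots,n}$ is diagonal, and under the hypotheses $c_j,c_{\infty}-c_1-\cdots-c_j\notin\Z$ its diagonal entries are all nonzero, so that $\{[\ga_{j\infty}]\}_{j=1,\dots,n}$ is a basis of $H_1(M;\CL_{\la}^{\vee})$. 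Writing $[\ga]=\sum_k a_k[\ga_{k\infty}]$ and pairing against $[\ga_{j\infty}^{\vee}]$, diagonality collapses the sum to a single term and gives $a_j = I_h([\ga],[\ga_{j\infty}^{\vee}])/I_h([\ga_{j\infty}],[\ga_{j\infty}^{\vee}])$. This produces the expansion $[\ga]=\sum_{j=1}^n \frac{I_h([\ga],[\ga_{j\infty}^{\vee}])}{I_h([\ga_{j\infty}],[\ga_{j\infty}^{\vee}])}[\ga_{j\infty}]$, the exact $+\tau$ analogue of the expansion underlying the $\ell_{p0*}$ theorem.

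Next I would apply $\ell_{p\infty*}$ to this expansion. Since $\ell_{p\infty*}$ is $\C$-linear and the $a_j$ are scalars, linearity combined with Lemma~\ref{lem:eigen-elljinfty} carries each $[\ga_{j\infty}]$ to $[\ga_{j\infty}']$ up to the scalar multiplier $1$ for $j<p$, $e^{\tpi(-c_{\infty}+c_1+\cdots+c_{p-1})}$ for $j=p$, and $e^{-\tpi c_p}$ for $j>p$; collecting terms gives the first displayed equality of the theorem. The second, fully explicit, equality then follows by substituting the value of $I_h([\ga_{j\infty}],[\ga_{j\infty}^{\vee}])$ from Proposition~\ref{prop:homology-int-diag-0infty} into each denominator and simplifying. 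For the generic terms this is the routine replacement $c_0\mapsto c_{\infty}$ of the $\ell_{p0*}$ formula; for the $j=p$ term one checks that the multiplier $e^{\tpi(-c_{\infty}+c_1+\cdots+c_{p-1})}$ times $-1/I_h([\ga_{p\infty}],[\ga_{p\infty}^{\vee}])$ yields numerator $e^{4\pii(c_1+\cdots+c_{p-1})}(1-e^{\tpi c_p})$, the factor $e^{\tpi(c_{\infty}+c_1+\cdots+c_{p-1})}$ in that diagonal entry cancelling the $-c_{\infty}$ and doubling the $c_1+\cdots+c_{p-1}$.

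I expect no serious obstacle at the level of the theorem itself: once Lemma~\ref{lem:eigen-elljinfty} and the diagonality of $H_{\infty\infty}$ are available, the argument is purely formal. The genuine content sits in Lemma~\ref{lem:eigen-elljinfty}, the true analogue of Lemma~\ref{lem:eigen-ellj0}, whose proof must track how the branch of $T(u)S(u)$ along the support of $\ga_{j\infty}$ is transported by the path $\ell_{p\infty}$ and compared, at the base point where the branch is fixed (as in the proof of Lemma~\ref{lem:eigen-ellj0}), to the branch defining $\ga_{j\infty}'$ in the deformed local system $\CL_{\la}^{(p\infty)\vee}$. The delicate point relative to the $+1$ case is that here the relevant quasi-periodicity is the one in the $\tau$-direction, so that the multiplier $e^{\tpi c_p u}$ arising from $T(u)|_{t_p\to t_p+\tau}=e^{\pi\sqrt{-1}c_p(\tau-2t_p+1)}e^{\tpi c_p u}T(u)$ together with the factor $e^{\tpi\la}$ in the monodromy of $R_{\la}$ account for the distinction between the scalar $e^{-\tpi c_p}$ (for $j>p$) and $e^{\tpi(-c_{\infty}+c_1+\cdots+c_{p-1})}$ (for $j=p$). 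Confirming that these bookkeeping factors combine correctly, and that the hypotheses $c_{\infty}-c_1-\cdots-c_j\notin\Z$ are precisely what keeps every denominator nonzero, is where the care is required.
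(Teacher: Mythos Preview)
Your approach is correct and matches the paper's: the paper simply states that the theorem is shown similarly to \S\ref{subsubsec:connection+1}, i.e., one expands $[\ga]$ in the basis $\{[\ga_{j\infty}]\}$ via the diagonal matrix $H_{\infty\infty}$ of Proposition~\ref{prop:homology-int-diag-0infty} and then applies Lemma~\ref{lem:eigen-elljinfty}. Your additional remarks on the explicit second equality and on the branch-tracking needed for Lemma~\ref{lem:eigen-elljinfty} are more detailed than what the paper records, but entirely in line with it.
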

We set 
$H_{1\infty}=(I_h([\ga_{1j}],[\ga_{k\infty}^{\vee}]))_{\substack{j=2,\dots ,n-1 ,0,\infty \\ k=1,\dots ,n}}$, 
$H_{\infty n}=(I_h([\ga_{j\infty}],[\ga_{nk}^{\vee}]))_{\substack{j=1,\dots ,n \\ k=2,\dots ,n-1 ,\infty,0}}$, 
explicit expressions of which are given in \S \ref{subsubsec:homology-int-matrices}. 
Recall that $H_{1n}$ and $H_{\infty \infty}$ are diagonal matrices. 
\begin{Cor}[\cite{Ghazouani-Pirio}, \cite{Mano}]
  For a matrix $H$ whose entries belong to $\C(e^{\tpi c_{*}})$, 
  we set $H^{(p\infty)}=H|_{c_0\to c_0+c_p}$. 
  The matrix defined by
  \begin{align*}
    M_{p\infty}=(H_{1n}^{-1} \tp{H_{\infty n}})^{(p\infty)} \cdot 
    \diag (1,\dots ,1 ,\overset{\text{$p$-th}}{e^{\tpi (-c_{\infty}+c_1+\dots +c_{j-1})}} ,e^{-\tpi c_p} ,\dots ,e^{-\tpi c_p})
    \cdot H_{\infty \infty}^{-1} \tp{H_{1\infty}} 
  \end{align*}
  satisfies the following equalities: 
  \begin{align*}
    % \label{eq:rep-mat-ellp0infty}
    &\ell_{p\infty *}([\ga_{12}],\dots ,[\ga_{1,n-1}],[\ga_{10}],[\ga_{1\infty}])
    =([\ga_{12}'],\dots ,[\ga_{1,n-1}'],[\ga_{10}'],[\ga_{1\infty}'])M_{p\infty}, \\
    % \label{eq:relation-mat-ellpinfty}
    &H_{11}=\tp{M_{p\infty}} \cdot H_{11}^{(p\infty)} \cdot M_{p\infty}^{\vee}.
  \end{align*}
\end{Cor}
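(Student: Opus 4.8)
The plan is to run the argument in complete parallel with the proof of the corresponding Corollary for $\ell_{p0*}$ in \S\ref{subsubsec:connection+1}, systematically replacing the $1$-translation cycles $\ga_{j0}$ by the $\tau$-translation cycles $\ga_{j\infty}$, and the parameter shift $c_{\infty}\to c_{\infty}+c_p$ by $c_0\to c_0+c_p$. First I would record that, under the stated conditions $c_j,\,c_{\infty}-c_1-\cdots-c_j\notin\Z$, the matrix $H_{\infty\infty}$ of Proposition \ref{prop:homology-int-diag-0infty} is diagonal and invertible, so $\{[\ga_{j\infty}]\}_{j=1,\dots,n}$ forms a basis of $H_1(M;\CL_{\la}^{\vee})$; on this basis Lemma \ref{lem:eigen-elljinfty} says that $\ell_{p\infty*}$ acts diagonally, with diagonal matrix exactly the $\diag(1,\dots,1,e^{\tpi(-c_{\infty}+c_1+\cdots+c_{p-1})},e^{-\tpi c_p},\dots,e^{-\tpi c_p})$ appearing in the definition of $M_{p\infty}$.

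Next I would establish the two change-of-basis identities relating $\{[\ga_{j\infty}]\}$ to the working basis $([\ga_{12}],\dots,[\ga_{1,n-1}],[\ga_{10}],[\ga_{1\infty}])$. Pairing a putative expansion of each $[\ga_{j\infty}]$ in the working basis against the dual cycles $[\ga_{nk}^{\vee}]$ and using that $H_{1n}$ of Proposition \ref{prop:homology-int-diag-1n} is diagonal pins down the coefficients and yields
\begin{align*}
([\ga_{1\infty}],\dots,[\ga_{n\infty}])=([\ga_{12}],\dots,[\ga_{1,n-1}],[\ga_{10}],[\ga_{1\infty}])\,H_{1n}^{-1}\tp{H_{\infty n}};
\end{align*}
the same computation, now pairing the working basis against $[\ga_{k\infty}^{\vee}]$ and using that $H_{\infty\infty}$ is diagonal, gives the inverse relation $(H_{1n}^{-1}\tp{H_{\infty n}})^{-1}=H_{\infty\infty}^{-1}\tp{H_{1\infty}}$.

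I would then assemble the first displayed identity by expanding the working basis in the $\{[\ga_{j\infty}]\}$ basis via $H_{\infty\infty}^{-1}\tp{H_{1\infty}}$, applying the diagonal action of $\ell_{p\infty*}$, and finally re-expanding the images $\{[\ga_{j\infty}']\}$ in the primed working basis. The one genuinely non-formal point is that the target local system $\CL_{\la}^{(p\infty)\vee}$ differs from the source by $c_0\to c_0+c_p$ (with $c_{\infty}$ unchanged), so the change-of-basis matrix in the target is $(H_{1n}^{-1}\tp{H_{\infty n}})^{(p\infty)}$ — the same matrix but carrying the superscript shift — which reproduces precisely $M_{p\infty}=(H_{1n}^{-1}\tp{H_{\infty n}})^{(p\infty)}\cdot\diag(\cdots)\cdot H_{\infty\infty}^{-1}\tp{H_{1\infty}}$. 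For the second identity I would invoke the invariance of the intersection form under the induced maps, $I_h([\ga],[\de])=I_h^{(p\infty)}(\ell_{p\infty*}([\ga]),\ell_{p\infty*}^{\vee}([\de]))$, where $I_h^{(p\infty)}$ is the intersection form on $H_1(M;\CL_{\la}^{(p\infty)\vee})\times H_1(M;\CL_{\la}^{(p\infty)})$ and $\ell_{p\infty*}^{\vee}:H_1(M;\CL_{\la})\to H_1(M;\CL_{\la}^{(p\infty)})$ is the dual map, and then read off the matrix identity $H_{11}=\tp{M_{p\infty}}\cdot H_{11}^{(p\infty)}\cdot M_{p\infty}^{\vee}$ on the working basis.

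The main obstacle is purely bookkeeping: tracking which of the four intersection matrices $H_{1n},H_{\infty n},H_{\infty\infty},H_{1\infty}$ are transposed, and above all placing the superscript $(p\infty)$ on the single correct factor. That shift lives only on the target change-of-basis factor $(H_{1n}^{-1}\tp{H_{\infty n}})^{(p\infty)}$, mirroring the $(p0)$ shift of \S\ref{subsubsec:connection+1} but with $c_0\to c_0+c_p$ in place of $c_{\infty}\to c_{\infty}+c_p$; getting this exactly right is what makes the relation $H_{11}=\tp{M_{p\infty}}H_{11}^{(p\infty)}M_{p\infty}^{\vee}$ come out consistent.
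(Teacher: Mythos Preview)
Your proposal is correct and follows exactly the approach the paper intends: the paper simply states that the lemma, theorem, and corollary for $\ell_{p\infty*}$ ``can be shown similarly to \S\ref{subsubsec:connection+1}'', and your write-up is a faithful spelling-out of that parallel argument, including the diagonal action from Lemma~\ref{lem:eigen-elljinfty}, the change-of-basis identities $([\ga_{1\infty}],\dots,[\ga_{n\infty}])=([\ga_{12}],\dots,[\ga_{1\infty}])\,H_{1n}^{-1}\tp{H_{\infty n}}$ and $(H_{1n}^{-1}\tp{H_{\infty n}})^{-1}=H_{\infty\infty}^{-1}\tp{H_{1\infty}}$, and the invariance $I_h([\ga],[\de])=I_h^{(p\infty)}(\ell_{p\infty*}([\ga]),\ell_{p\infty*}^{\vee}([\de]))$ for the second identity. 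Your placement of the $(p\infty)$ shift on the target factor is also correct.
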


\section{Intersection theory for twisted cohomology group}\label{sec:cohomology-intersection}
As in \S \ref{subsec:homology-cohomology}, we assume $\la\in P=\{ a +b\tau \mid 0\leq a,b < 1 \}$ 
when we discuss the twisted cohomology groups. 
\subsection{Intersection form}
The cohomology intersection form $I_c$ is 
a non-degenerate bilinear form between $H^1(M;\CL_{\la})$ and $H^1(M;\CL_{\la}^{\vee})$: 
\begin{align*}
  I_c(\bu ,\bu) : H^1(M;\CL_{\la}) \times H^1(M;\CL_{\la}^{\vee}) \longrightarrow \C .
\end{align*}
By Fact \ref{fact:vanish-dim}, 
we have $H^1(M;\CL_{\la}^{\vee})\simeq \Om^1_{-\la}(*D)(E)/\na^{\vee}(\CO_{-\la}(*D)(E))$,
where $\na^{\vee} f =df -f\om$. 
Thus, we also identify $H^1(M;\CL_{\la}^{\vee})$ with $\Om^1_{-\la}(*D)(E)/\na^{\vee}(\CO_{-\la}(*D)(E))$. 

To define the intersection form, we introduce two de Rham cohomology groups. 
Let $\CE^i_{M}$ and $\CE^i_{M,c}$ be the sheaves of smooth $i$-forms on $M$ and 
those with compact support, respectively ($i=0,1,2$). 
We set $\CE^i_{\la}=\CE^i_{M}\ot_{\C} R_{\la}$ and $\CE^i_{\la,c}=\CE^i_{M,c}\ot_{\C} R_{\la}$.  
Then, $\na =d+\om \we $ is naturally defined on $\CE^{\bu}_{\la}$ and $\CE^{\bu}_{\la,c}$. 
There are two natural homomorphisms 
\begin{align*}
  \iota_1: H^1(M;\CL_{\la}) \longrightarrow H^1 (\CE^{\bu}_{\la}(M),\na),\qquad 
  \iota_2: H^1(\CE^{\bu}_{\la,c}(M),\na) \longrightarrow H^1 (\CE^{\bu}_{\la}(M),\na) .
\end{align*}
\begin{Prop}\label{prop:comparison}
  The morphisms $\iota_1$ and $\iota_2$ are isomorphisms. 
\end{Prop}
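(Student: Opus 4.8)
The plan is to read both $\iota_1$ and $\iota_2$ as comparison isomorphisms for the flat connection $\na=d+\om\we$, the whole argument being driven by the nontrivial local monodromies $e^{\tpi c_j}\neq 1$ at the punctures. First I would record the twisted Poincar\'e lemma: in a local branch of $T(u)$ the gauge transformation $f\mapsto T(u)f$ turns $\na$ into the ordinary $d$, so that $\ker(\na\colon\CE^0_\la\to\CE^1_\la)=\CL_\la$ and $(\CE^\bu_\la,\na)$ is a fine resolution of $\CL_\la$. Taking global (resp. compactly supported) sections then yields, by the abstract de Rham theorem,
\[
  H^k(\CE^\bu_\la(M),\na)\cong H^k(M;\CL_\la),\qquad H^k(\CE^\bu_{\la,c}(M),\na)\cong H^k_c(M;\CL_\la).
\]
Since $M$ is a noncompact surface it has the homotopy type of a $1$-complex, so $H^{\ge 2}$ vanishes; the nontrivial monodromy forces $H^0(M;\CL_\la)=0$, and with $\chi(M)=\chi(E)-n=-n$ everything is concentrated in degree $1$ with dimension $n$, matching Fact \ref{fact:vanish-dim}.

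For $\iota_1$ I would prove injectivity and close by the dimension count. Suppose $\psi\in\Om^1_\la(*D)(E)$ becomes $\na$-exact in the smooth complex, say $\psi=\na h$ with $h\in\CE^0_\la(M)$. Because $\om$ and $\psi$ are of type $(1,0)$, comparing $(0,1)$-parts gives $\bar\pa h=0$, so $h$ is holomorphic on $M$ and solves the regular-singular equation $\pa h+h\om=\psi$. As $\om$ has a simple pole at each $t_j$, this ODE forces moderate growth, and a single-valued holomorphic function of moderate growth extends meromorphically by Riemann's removable-singularity theorem; hence $h\in\CO_\la(*D)(E)$ and $[\psi]=0$ already in the meromorphic complex. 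Injectivity together with $\dim=n$ on both sides shows $\iota_1$ is an isomorphism.

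For $\iota_2$ I would compactify by the real-oriented blow-up of $M$ at the $t_j$, obtaining a compact surface with boundary $\bar M$ whose boundary is $\pa\bar M=\bigsqcup_{j=1}^{n}S^1_j$ and whose interior is $M$; the local system $\CL_\la$ extends over each boundary circle. Under the identifications $H^k(\bar M,\pa\bar M;\CL_\la)\cong H^k_c(M;\CL_\la)$ and $H^k(\bar M;\CL_\la)\cong H^k(M;\CL_\la)$, the long exact sequence of the pair reads
\[
  \cdots\to H^k_c(M;\CL_\la)\xrightarrow{\ \iota_2\ }H^k(M;\CL_\la)\to H^k(\pa\bar M;\CL_\la)\to\cdots.
\]
Each $S^1_j$ carries the rank-one local system with monodromy $e^{\tpi c_j}\neq 1$, so $H^0(S^1_j;\CL_\la)=\ker(e^{\tpi c_j}-1)=0$ and $H^1(S^1_j;\CL_\la)=\operatorname{coker}(e^{\tpi c_j}-1)=0$. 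Thus $H^\bu(\pa\bar M;\CL_\la)=0$ and $\iota_2$ is an isomorphism in every degree.

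The boundary vanishing making $\iota_2$ work is routine; I expect the delicate part to be the regularity step for $\iota_1$, namely verifying that smooth $\na$-exactness of a \emph{meromorphic} form is already detected inside the meromorphic complex, and relatedly confirming that the concrete inclusion of meromorphic into smooth forms genuinely realizes the abstract de Rham isomorphism rather than merely being an injection between spaces of equal dimension. Throughout, the hypothesis $c_j\notin\Z$ is indispensable: it yields $H^0=0$ (so the dimension count closes) and the nonvanishing $e^{\tpi c_j}-1$ (so the boundary cohomology dies), and the main effort will go into the local ODE and growth analysis at each puncture.
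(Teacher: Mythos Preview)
Your argument is correct and takes a genuinely different route from the paper's. For $\iota_1$ the paper proves \emph{surjectivity}: given a smooth $\na$-closed form it solves a $\bar{\pa}$-equation to kill the $(0,1)$-part, observes the resulting $(1,0)$-form is holomorphic, and then invokes \cite[Proposition~2.5]{Mano-Watanabe} to pass to a meromorphic representative. You instead prove \emph{injectivity} via the regular-singular ODE at each puncture and close by the common dimension count, which avoids both the $\bar{\pa}$-lemma and the external citation. For $\iota_2$ the paper is entirely constructive: it writes down an explicit local primitive
\[
  f_j(v)=\frac{1}{(e^{\tpi c_j}-1)T(v)}\int_{C_j(v)}T(u)\vph
\]
on each punctured disc and uses bump functions to produce a compactly supported representative, whereas you read $\iota_2$ off the long exact sequence of the real-oriented blow-up, using $e^{\tpi c_j}\neq 1$ to kill the boundary cohomology. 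Your route is cleaner and more portable; the paper's construction, however, is not wasted effort, since the explicit $f_j$ is precisely what feeds into the residue formula $I_c([\vph],[\vph'])=\tpi\sum_j\Res_{u=t_j}(f_j\vph')$ for the intersection pairing immediately afterward. If you follow your approach you will still need to manufacture that local primitive separately when it comes time to compute $I_c$.
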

\begin{proof}
  It suffices to show that the following claims hold: 
  \begin{enumerate}[(1)]
  \item $\iota_1$ is surjective. 
  \item $\iota_2$ is surjective. 
  \item $H^0 (\CE^{\bu}_{\la}(M),\na)=0$ and $H^0 (\CE^{\bu}_{\la,c}(M),\na)=0$. 
  \item $H^2 (\CE^{\bu}_{\la}(M),\na)=0$ and $H^2 (\CE^{\bu}_{\la,c}(M),\na)=0$. 
  \end{enumerate}
  First, we temporarily admit these claims and prove the proposition. 
  The vanishing results (3), (4) imply 
  $\dim H^1 (\CE^{\bu}_{\la}(M),\na)=-\chi (M)=n$ (similarly, $\dim H^1 (\CE^{\bu}_{\la,c}(M),\na)=n$), 
  and hence $\iota_1$, $\iota_2$ are isomorphisms because of (1), (2). 
  Now, let us prove the claims. 
  We recall that for any $(0,1)$-form $\psi$ on $M$, there exists a function $f\in \CE^0_M(M)$
  such that $\bar{\pa} f =\psi$ (e.g., \cite[Theorem 25.6]{Forster}).
  % We set $\frs_1(u)=\frs(u-t_1;\la)$ (resp. $\frs_1(u)=1$)$\in \CO_{\la}(*D)(E)$ when 
  % $\la \neq 0$ (resp. $\la=0$). 
  Let $S$ be a holomorphic section of $\CO_E \ot R_{\la}$ on $M$ which has no zeros (cf. \cite[Lemma 30.2]{Forster}). 
  \begin{enumerate}[(1)]
  \item 
    Suppose $\psi =f_1 du +f_2 d\bar{u} \in \CE^1_{\la}(M)$ satisfies $\na \psi =0$. 
    Since $f_2 \in \CE^0_{\la}(M)$, we have $f_2/S \in \CE^0_M(M)$, and hence
    $(f_2/S)d\bar{u}$ is a $(0,1)$-form on $M$. 
    Thus, there exists a function $g\in \CE^0_M(M)$ such that $\bar{\pa} g =(f_2/S)d\bar{u}$. 
    Since $S$ is holomorphic, we have $f_2 d\bar{u} =\bar{\pa} (gS)$. 
    We set $\psi_1 =\psi -\na (gS)$. 
    Note that $[\psi_1]=[\psi]$ in $H^1 (\CE^{\bu}_{\la}(M),\na)$ because of $gS \in \CE^0_{\la}(M)$.  
    The $(1,0)$-form $\psi_1 =f_1 du -\pa (gS) -gS \om$ is a holomorphic one. 
    Indeed, we have 
    \begin{align*}
      \bar{\pa}\psi_1
      =\bar{\pa}(f_1 du) +\pa \bar{\pa}(gS) +\om \we \bar{\pa}(gS)
      =\bar{\pa}(f_1 du) +\pa (f_2 d\bar{u}) +\om \we (f_2 d\bar{u})
      =\na \psi =0 .
    \end{align*}
    By \cite[Proposition 2.5]{Mano-Watanabe}, there exists $\psi_2 \in \Om^1_{\la}(*D)(E)$ such that 
    $\psi_1 -\psi_2 \in \na (\CO_{\la}(M))$. 
    This implies $[\psi]=\iota_1([\psi_2])$, and hence the surjectivity of $\iota_1$ is proved. 
  \item
    It is sufficient to show that 
    for any $\vph \in \CE^1_{\la}(M)$ satisfying $\na \vph =0$, 
    there exist $\vph_1 \in \CE^1_{\la,c}(M)$ and $f\in \CE^0_{\la}(M)$ such that $\vph -\na f= \vph_1$. 
    We will find such $\vph_1$ and $f$ in a similar way to that in \cite[Proposition 3]{M-MA2019}. 
    For $j=1,\dots ,n$, let $U_j$ and $V_j$ be open neighborhoods of $t_j$ such that 
    $\overline{U_j}\subset V_j$ and $V_j \cap V_{k}=\emptyset$ ($j\neq k$), 
    and let $h_j$ be a smooth function on $E$ that satisfies $h_j \equiv 1$ on $U_j$ and 
    $h_j \equiv 0$ on $V_j^c$. For $v\in V_j-\{t_j\}$, we set 
    \begin{align}
      \label{eq:local-solution-integral}
      f_j(v) =\frac{1}{(e^{\tpi c_j}-1)T(v)} \int_{C_j(v)} T(u)\vph ,
    \end{align}
    where $C_j (v)$ is a loop in $V_j$ with terminal $v$ turning once around $t_j$ positively
    (a similar construction is also provided in \cite[Theorem 2.1]{Ito2010}). 
    By $\na \vph =0$ and Stokes' theorem, (\ref{eq:local-solution-integral}) is independent of the choice of the loop. 
    It is not difficult to see that $h_j f_j \in \CE^0_{\la}(M)$ and $\na(h_j f_j)=\vph$ on $U_j$. 
    We set $f=\sum_{j=1}^n h_j f_j$ and $\vph_1 =\vph -\na f$. 
    On each $U_j$, we have $\vph_1 =\vph -\na (h_j f_j)=0$, and hence 
    $\vph_1$ is an element in $\CE^1_{\la,c}(M)$. 
  \item
    Since the global solution to $\na f=0$ ($f\in \CE^0_{\la}(M)$ or $f\in \CE^0_{\la,c}(M)$) is only zero, 
    the $0$-th cohomology groups vanish. 
  \item
    First, we show $H^2 (\CE^{\bu}_{\la}(M),\na)=0$. 
    Let us consider $\eta =f du \we d\bar{u}\in \CE^2_{\la}(M)$. 
    Similarly to (1), there exists $g_1 \in \CE^0_{M}(M)$ such that $f d\bar{u} =\bar{\pa} (g_1 S)$. 
    We set $\psi =-g_1 S du$. Then we have $\psi \in \CE^1_{\la}(M)$ and 
    \begin{align*}
      \na \psi =-\bar{\pa} (g_1 S du) -\om \we (g_1 S du)
      =-f d\bar{u} \we du =\eta ,
    \end{align*}
    which implies $H^2 (\CE^{\bu}_{\la}(M),\na)=0$. 
    Next, we show $H^2 (\CE^{\bu}_{\la,c}(M),\na)=0$. 
    For any $\eta \in \CE^2_{\la,c}(M)$, we can find $\psi_1 \in \CE^1_{\la}(M)$ such that $\na \psi_1 =\eta$ 
    by the above discussion. 
    Since $\na \psi_1 =\eta$ has a compact support, we have $\na \psi_1 \equiv 0$ on a small neighborhood of each $t_j$. 
    By the same manner as (\ref{eq:local-solution-integral}), we can find $f_j$ satisfying 
    $\na f_j=\psi_1$ around $t_j$. 
    Similarly to (2), we can construct $f\in \CE^0_{\la}(M)$ such that 
    $\psi_2 =\psi_1 -\na f$ belongs to $\CE^1_{\la,c}(M)$. 
    Therefore we have $\eta =\na \psi_2 \in \na(\CE^1_{\la,c}(M))$, and 
    the proof is completed. 
    \qedhere
  \end{enumerate}
\end{proof}

\begin{Def}
  We set $\reg_c =\iota_2^{-1} \circ \iota_1 :H^1(M;\CL_{\la}) \to H^1(\CE^{\bu}_{\la,c},\na)$, 
  and call it the regularization map on the twisted cohomology group. 
\end{Def}
Thanks to the discussion in \cite{M-k-form}, we can evaluate the intersection number $I_c ([\vph],[\vph'])$ 
for $[\vph]\in H^1(M;\CL_{\la})$ and $[\vph']\in H^1(M;\CL_{\la}^{\vee})$ as follows. 
% By Fact \ref{fact:cohomology-basis}, 
% we may assume that $\vph \in \Om^1_{\la}(*D)(E)$ has poles of order at most two. 
The intersection number is defined by 
\begin{align*}
  I_c ([\vph],[\vph']) = \int_{M} \vph_1 \we \vph' 
  \qquad (\vph_1 \in \CE^1_{\la,c}(M) \textrm{ satisfies } [\vph_1]=\reg_c([\vph])), 
\end{align*}
which converges and is well-defined. 
By using the expression $\vph_1=\vph -\na f=\vph -\na (\sum_{j=1}^n h_j f_j)$ 
in the proof of Proposition \ref{prop:comparison} (2), we obtain a formula 
\begin{align}
  \label{eq:intersecion-residue}
  I_c ([\vph],[\vph']) =\tpi \sum_{j=1}^n \Res_{u=t_j} (f_j \vph' ) 
\end{align}
in a similar manner to \cite{M-k-form}. 
Since $\vph \in \Om^1_{\la}(*D)(E)$ and the expression (\ref{eq:local-solution-integral}) of $f_j$
imply $f_j \in \CO_{\la}(*D)(V_j)$, we have $f_j \vph' \in \Om^1_E(*D)(V_j)$. 
Thus, the residue in (\ref{eq:intersecion-residue}) can be evaluated by using the 
Laurent expansions of $f_j$ and $\vph'$. 

Similarly to \S \ref{subsec:homology-intersection-form}, 
we often consider $c_j$'s and $\la$ as indeterminates and we can regard an intersection number 
as an element in the field 
$K(c_{*},\la,t_{*})$ of functions in $c_1,\dots ,c_n,c_0,\la$, $t_1,\dots ,t_n$ 
% $\C(c_{*},\la):=\C(c_1,\dots ,c_n,c_0,\la)$ 
which has an involution
$(c_1,\dots,c_n,c_0,\la) \mapsto (-c_1,\dots,-c_n,-c_0,-\la)$. 
For $\vph(u;\la) \in\Om^1_{\la}(*D)(E)$, we set $\vph(u;\la)^{\vee}=\vph(u;-\la) \in \Om^1_{-\la}(*D)(E)$. 
Thus, for a $K(c_{*},\la,t_{*})$-linear combination $\vph=\sum_i a_i \vph_i$ 
($a_i\in K(c_{*},\la,t_{*})$, $\vph_i \in \Om^1_{\la}(*D)(E)$), 
we can naturally define $\vph^{\vee}\in \Om^1_{-\la}(*D)(E)$ by 
$\vph^{\vee}=\sum_i a_i^{\vee} \vph_i^{\vee}$. 
For example, we have $(c_1 \frs(u-t_1;\la)du)^{\vee}=-c_1 \frs(u-t_1;-\la)du$. 
By using these notations, 
we have $I_c([\vph],[\psi^{\vee}])=-I_c([\psi],[\vph^{\vee}])^{\vee}$, 
for $[\vph],[\psi]\in H^1(M;\CL_{\la})$. 
% We can verify this property in our formulas. 

\subsection{Intersection numbers}
In this section, we give formulas of the intersection numbers for 
the twisted cocycles introduced in \S \ref{subsec:homology-cohomology}. 
We also define other twisted cocycles which will be used in \S \ref{subsec:contiguity}, 
and give their intersection numbers. 
Precise computations will be given in \S \ref{subsec:eval-cohomology-intersection}. 

\subsubsection{The case when $\la \in P-\{ 0\}$}\label{subsubsec:cohomology-lambda-nonzero}
We set $\psi_j =\frs (u-t_j ;\la)du \in \Om^1_{\la}(*D)(E)$
which has a simple pole at $u=t_j$ and satisfies $\Res_{u=t_j}(\psi_j)=1$.  
By Fact \ref{fact:cohomology-basis} (i), $\{ [\psi_j] \}_{j=1,\dots ,n}$ form a basis of 
$H^1(M;\CL_{\la})$. 
\begin{Th}\label{th:cohomology-intersection-1}
  % The intersection number $I_c ([\psi_j],[\psi_k^{\vee}])$ is obtained as follows: 
  We have
  \begin{align*}
    I_c ([\psi_j],[\psi_j^{\vee}])=\frac{\tpi}{c_j} ,\qquad 
    I_c ([\psi_j],[\psi_k^{\vee}])=0 \quad (j\neq k).
  \end{align*}
\end{Th}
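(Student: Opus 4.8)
The plan is to apply the residue formula (\ref{eq:intersecion-residue}) directly. The cocycle $\psi_j = \frs(u-t_j;\la)\,du$ has a single simple pole at $u=t_j$ with $\Res_{u=t_j}(\psi_j)=1$, so the local functions $f_k$ solving $\na f_k = \psi_j$ near $t_k$ (from the construction in the proof of Proposition \ref{prop:comparison}(2)) are nontrivial only at $t_j$. Hence the sum $I_c([\psi_j],[\psi_k^\vee]) = \tpi \sum_{l} \Res_{u=t_l}(f_l \psi_k^\vee)$ collapses. First I would compute the local solution $f_j$ defined by (\ref{eq:local-solution-integral}). Because $\psi_j$ has a simple pole, $f_j$ is a meromorphic (in fact, near $t_j$, single-valued after dividing by $T$) function whose Laurent expansion at $t_j$ is what controls everything; the key point is that $f_j$ will itself have a simple pole at $t_j$, so the product $f_j \psi_k^\vee$ can have a pole of order up to two.

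The main computation is the Laurent expansion of $f_j$ at $u=t_j$. Writing $\psi_j^\vee = \frs(u-t_j;-\la)\,du$, I would expand $T(u)$, $\frs(u-t_j;\la)$, and the reciprocal $1/T(v)$ about $t_j$. Since $T(u)=e^{\tpi c_0 u}\prod_l \vth_1(u-t_l)^{c_l}$ and $\vth_1$ has a simple zero at the origin, near $t_j$ we have $T(u) \sim (u-t_j)^{c_j}\cdot(\text{holomorphic nonzero})$, while $\frs(u-t_j;\la)\sim (u-t_j)^{-1}$. The loop integral $\int_{C_j(v)} T(u)\psi_j$ picks out the monodromy factor $e^{\tpi c_j}-1$ times the analytic continuation, which cancels the prefactor $1/(e^{\tpi c_j}-1)$ in (\ref{eq:local-solution-integral}); what remains is an explicit germ. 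I expect to find $f_j(v) = \frac{1}{c_j}(v-t_j)^{-1}\cdot(1+O(v-t_j))/(\text{unit})$ up to the local branch of $T$, so that $f_j$ contributes a leading term $\tfrac{1}{c_j(v-t_j)}$.

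The residue then comes from pairing this $\tfrac{1}{c_j}$-scaled simple pole of $f_j$ against the simple pole of $\psi_k^\vee$. When $j=k$, the product $f_j \psi_j^\vee$ has a double pole at $t_j$; extracting $\Res_{u=t_j}$ requires the subleading term of $f_j$ together with $\Res_{u=t_j}(\psi_j^\vee)=1$ and the normalization $\frac{\pa\frs}{\pa u}$ data, and I expect the residue to evaluate to $1/c_j$, giving $I_c([\psi_j],[\psi_j^\vee])=\tpi/c_j$. When $j\neq k$, the cocycle $\psi_k^\vee$ is holomorphic at $t_j$ and $f_j$ is supported (as a nonzero local solution) only near $t_j$, while $f_k$ vanishes at $t_j$ since $\psi_j$ is holomorphic there; therefore every term $\Res_{u=t_l}(f_l\psi_k^\vee)$ vanishes and $I_c([\psi_j],[\psi_k^\vee])=0$.

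The hardest step is the careful bookkeeping of the Laurent expansion of $f_j$ to second order, since the diagonal value $1/c_j$ depends on the subleading coefficient of the local solution, not just its residue. The quasi-periodicity of $\frs$ and the Laurent expansion (\ref{eq:frs-laurent}) of $\frs(u;\la)=-\tfrac{1}{\la}+\rho(u)+\cdots$ should let me pin down the needed coefficients cleanly; I would also double-check the answer against the antisymmetry relation $I_c([\vph],[\psi^\vee])=-I_c([\psi],[\vph^\vee])^\vee$, under which $\tpi/c_j$ is consistent because the involution sends $c_j\mapsto -c_j$ and flips an overall sign.
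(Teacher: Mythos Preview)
Your overall approach---applying the residue formula (\ref{eq:intersecion-residue}) and analyzing the local solutions $f_l$ of $\na f_l = \psi_j$---is exactly the paper's method. The off-diagonal vanishing $I_c([\psi_j],[\psi_k^\vee])=0$ for $j\neq k$ is also handled correctly.

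However, your description of the diagonal case contains a concrete error. You claim that $f_j$ has a simple pole at $t_j$, so that $f_j \psi_j^\vee$ has a double pole and the residue depends on the subleading term of $f_j$. This is wrong: $f_j$ is \emph{holomorphic} at $t_j$. Solving $\na f = \psi$ always raises the order of vanishing by one (this is the paper's observation $\ord_{t_l}(f_l)=\ord_{t_l}(\vph)+1$). Concretely, writing $f_j=\sum_m b_m(u-t_j)^m$ and matching the $(u-t_j)^{-2}$ coefficient in $df_j+f_j\om=\psi_j$ gives $b_{-1}(c_j-1)=a_{-2}=0$, so $b_{-1}=0$; then the $(u-t_j)^{-1}$ coefficient gives $b_0 c_j = a_{-1}=1$, so $b_0=1/c_j$. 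Equivalently, in your integral picture: $T(u)\psi_j$ behaves like $(u-t_j)^{c_j-1}$ near $t_j$, and integrating gives $(v-t_j)^{c_j}/c_j$ up to the monodromy factor; dividing by $T(v)\sim(v-t_j)^{c_j}$ leaves a holomorphic germ with constant term $1/c_j$, not a pole.

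Consequently the diagonal computation is much simpler than you anticipate: since $f_j$ is holomorphic with $f_j(t_j)=1/c_j$ and $\psi_j^\vee$ has a simple pole with residue $1$, one has
\[
\Res_{u=t_j}(f_j\psi_j^\vee)=f_j(t_j)\cdot\Res_{u=t_j}(\psi_j^\vee)=\frac{1}{c_j},
\]
and no subleading coefficients are needed.
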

Note that the determinant of the intersection matrix 
$C_{\psi \psi}=\big( I_c ([\psi_j],[\psi_k^{\vee}]) \big)_{j,k=1,\dots ,n}$ 
is equal to $(\tpi)^n/(c_1\cdots c_n)\neq 0$. 
Thus, by using the intersection form, we can also verify that $\{ [\psi_j] \}_{j=1,\dots ,n}$ form a basis. 

We set $\phi_p =\frac{\pa \frs}{\pa u}(u-t_p;\la) du\in \Om^1_{\la}(*D)(E)$
which has a pole of order $2$ at $u=t_p$. 
For the discussion in \S \ref{subsec:contiguity}, we show that 
$\{ [\phi_p] \} \cup \{ [\psi_k] \}_{k\neq p}$ also form a basis, and 
we construct another basis dual to it. 
\begin{Prop}\label{prop:cohomology-intersection-2}
  For $p,q\in \{ 1,\dots ,n \}$ with $p\neq q$, we set 
  \begin{align*}
    \vph^{(pq)}_{j}&=
    \begin{cases}
      \phi_p & (j=q) \\
      \psi_j & (j\neq q), 
    \end{cases}
    \\
    \eta^{(pq)}_{k}&=
    \begin{cases}
      \psi_p+\frac{1}{\frs(t_p-t_q;\la)}\Big( 
      \frac{1}{c_p}\Big(\tpi c_0 +\sum_{l\neq p} c_l \rho(t_p -t_l) \Big) -\rho(-\la) \Big)\psi_q & (k=p) \\
      \psi_q & (k=q) \\
      \psi_k -\frac{\frs (t_p-t_k;\la)}{\frs (t_p-t_q;\la)}\psi_q & (k\neq p,q). 
    \end{cases}
  \end{align*}
  Then we have $I_c ([\vph^{(pq)}_{j}], [\eta^{(pq)\vee}_{k}])=0$ if $j\neq k$, and 
  \begin{align*}
    % I_c ([\vph^{(pq)}_{p}],[\eta^{(pq)\vee}_{p}])
    % &=I_c ([\psi_p],[\psi_{p}^{\vee}])
    %   =\frac{\tpi}{c_p},\\
    I_c ([\vph^{(pq)}_{q}],[\eta^{(pq)\vee}_{q}])
    &=I_c ([\phi_p],[\psi_q^{\vee}])
      =-\tpi \cdot \frac{\frs(t_p-t_q;-\la)}{c_p-1},\\
    I_c ([\vph^{(pq)}_{j}],[\eta^{(pq)\vee}_{j}])
    &=I_c ([\psi_j],[\psi_j^{\vee}])=\frac{\tpi}{c_j} \quad (j\neq q).
  \end{align*}
\end{Prop}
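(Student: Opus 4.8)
The plan is to reduce the whole statement to the residue formula (\ref{eq:intersecion-residue}), Theorem \ref{th:cohomology-intersection-1}, and bilinearity, so that the only genuinely new ingredient is the family of intersection numbers $I_c([\phi_p],[\psi_k^\vee])$ for $k=1,\dots,n$. Writing each $\vph^{(pq)}_j$ and each $\eta^{(pq)}_k$ as a combination of the $\psi_m$ and of $\phi_p$, every entry $I_c([\vph^{(pq)}_j],[\eta^{(pq)\vee}_k])$ expands, by bilinearity, into $I_c([\psi_j],[\psi_m^\vee])=\frac{\tpi}{c_j}\de_{jm}$ (Theorem \ref{th:cohomology-intersection-1}) together with the numbers $I_c([\phi_p],[\psi_m^\vee])$. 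Hence, once these last numbers are computed, the proposition becomes a finite case check.

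To compute $I_c([\phi_p],[\psi_k^\vee])$ I would use the local primitives $f_j$ appearing in (\ref{eq:intersecion-residue}): near $t_j$, $f_j$ is the meromorphic solution of $\na f_j=\vph$, i.e., writing $\vph=a\,du$, of $f_j'+\bigl(\tpi c_0+\sum_l c_l\rho(u-t_l)\bigr)f_j=a$, whose coefficient has principal part $\frac{c_j}{u-t_j}$ at $t_j$. Since $\frs(u;\la)=\frac{1}{u}+\rho(-\la)+O(u)$, the cocycle $\phi_p=\frac{\pa\frs}{\pa u}(u-t_p;\la)du$ has a double pole with vanishing residue at $t_p$ and is holomorphic at the other $t_l$; solving the equation termwise gives $f_l(t_l)=0$ for $l\neq p$, while near $t_p$ one finds $f_p=\frac{b_{-1}}{u-t_p}+b_0+\cdots$ with $b_{-1}=\frac{1}{1-c_p}$ and $b_0=-\frac{b_{-1}}{c_p}\bigl(\tpi c_0+\sum_{l\neq p}c_l\rho(t_p-t_l)\bigr)$, the constant $b_0$ coming from the regular part of $\om$ at $t_p$. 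As $\psi_k^\vee=\frs(u-t_k;-\la)du$ has its only pole at $t_k$, for $k\neq p$ only the $l=p$ residue survives and pairs $b_{-1}$ with the value $\frs(t_p-t_k;-\la)$, giving $I_c([\phi_p],[\psi_k^\vee])=-\tpi\frac{\frs(t_p-t_k;-\la)}{c_p-1}$; for $k=p$ one also uses the constant term $\rho(\la)$ of $\psi_p^\vee$, which yields $I_c([\phi_p],[\psi_p^\vee])=\frac{\tpi K}{c_p-1}$ with $K=\frac{1}{c_p}\bigl(\tpi c_0+\sum_{l\neq p}c_l\rho(t_p-t_l)\bigr)-\rho(\la)$.

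With these values the rest is verification. For $j\neq q$ one has $\vph^{(pq)}_j=\psi_j$, and $I_c([\psi_j],[\psi_m^\vee])=\frac{\tpi}{c_j}\de_{jm}$ kills every off-diagonal entry and returns $\frac{\tpi}{c_j}$ on the diagonal, no matter which combination of the $\psi_m$ the cocycle $\eta^{(pq)}_k$ is. For the row $j=q$, where $\vph^{(pq)}_q=\phi_p$, the combinations $\eta^{(pq)}_k$ are tailored precisely to cancel the two surviving terms: for $k\neq p,q$, applying the involution to $\eta^{(pq)}_k=\psi_k-\frac{\frs(t_p-t_k;\la)}{\frs(t_p-t_q;\la)}\psi_q$ gives $I_c([\phi_p],[\eta^{(pq)\vee}_k])=I_c([\phi_p],[\psi_k^\vee])-\frac{\frs(t_p-t_k;-\la)}{\frs(t_p-t_q;-\la)}I_c([\phi_p],[\psi_q^\vee])=0$, and for $k=p$ the involution turns the coefficient of $\psi_q$ in $\eta^{(pq)}_p$ into $\frac{K}{\frs(t_p-t_q;-\la)}$, whence $I_c([\phi_p],[\eta^{(pq)\vee}_p])=\frac{\tpi K}{c_p-1}+\frac{K}{\frs(t_p-t_q;-\la)}\bigl(-\tpi\frac{\frs(t_p-t_q;-\la)}{c_p-1}\bigr)=0$. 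The remaining diagonal entry $I_c([\vph^{(pq)}_q],[\eta^{(pq)\vee}_q])=I_c([\phi_p],[\psi_q^\vee])=-\tpi\frac{\frs(t_p-t_q;-\la)}{c_p-1}$ is already in hand.

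The main obstacle is the accurate determination of the Laurent data of $f_p$ at $t_p$, especially the constant term $b_0$: it requires carefully isolating the regular part $\tpi c_0+\sum_{l\neq p}c_l\rho(t_p-t_l)$ of $\om$ and the constant terms $\rho(\pm\la)$ of $\frs(\cdot\,;\pm\la)$, and an error there would corrupt the $k=p$ diagonal value $I_c([\phi_p],[\psi_p^\vee])$ and, correspondingly, the exact coefficient that must appear in $\eta^{(pq)}_p$. By contrast, the off-diagonal cancellations and the $k\neq p$ values depend only on the leading coefficient $b_{-1}=\frac{1}{1-c_p}$ and are comparatively robust.
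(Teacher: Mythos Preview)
Your argument is correct and rests on the same key computation as the paper: both extract the first two Laurent coefficients $b_{-1}=\frac{-1}{c_p-1}$ and $b_0=\frac{\al_0^{(p)}}{c_p(c_p-1)}$ of the local primitive $f_p$ of $\phi_p$ at $t_p$, and the diagonal values are obtained identically. The organization differs slightly. You first compute the individual numbers $I_c([\phi_p],[\psi_m^\vee])$ for every $m$ and then assemble each $I_c([\phi_p],[\eta^{(pq)\vee}_k])$ by bilinearity, exhibiting the off-diagonal vanishing as an explicit cancellation. The paper instead works directly with the combined forms $\eta^{(pq)}_k$ and invokes the order criterion $\ord_{t_l}(\vph)+\ord_{t_l}(\vph')\geq -1$ (Lemma \ref{lem:order-residue=0}): for $k\neq p,q$ it observes that $\eta^{(pq)}_k$ is designed to have $\ord_{t_p}(\eta^{(pq)}_k)\geq 1$, so the pairing with $\phi_p$ dies immediately, and the only case requiring the full Laurent data is $(j,k)=(q,p)$. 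Your route makes the structure of $\eta^{(pq)}_k$ as the combination that kills $I_c([\phi_p],\cdot)$ more transparent; the paper's route avoids computing $I_c([\phi_p],[\psi_k^\vee])$ for every $k$ separately. Either way the substance is the same.
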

We have constructed $\{\eta^{(pq)}_{k}\}_{k=1,\dots ,n}$ so that 
the intersection matrix $C_{\vph \eta}=\big( I_c([\vph^{(pq)}_j],[\eta_k^{(pq)\vee}])\big)_{j,k=1,\dots,n}$ becomes diagonal. 
% the condition ``$I_c ([\vph^{(pq)}_{j}], [\eta^{(pq)\vee}_{k}])=0$ if $j\neq k$'' holds. 
For details, see \S \ref{subsec:eval-cohomology-intersection}. 
Note that $\eta^{(pq)\vee}_{k}$ can be defined under a condition $t_p-t_q+\la \not\in \La_{\tau}$. 
We thus conclude that $\{ [\vph^{(pq)}_{j} ]\}_{j=1,\dots ,n}=\{ [\phi_p] \} \cup \{ [\psi_k] \}_{k\neq p}$ form
a basis of $H^1(M;\CL_{\la})$ under this condition.

\subsubsection{The case when $\la =0$}
As mentioned in Fact \ref{fact:cohomology-basis} (ii), 
to obtain a basis of $H^1(M;\CL_{\la})$ for $\la=0$, 
we need to use a $1$-form having a pole of order $2$. 
Though $t_1$ is specified in Fact \ref{fact:cohomology-basis} (ii), 
there is no difficulty caused by specifying another $t_i$. 
Thus, in this section, we consider 
\begin{align*}
  \vph_0 =du ,\quad 
  \vph_{ii} =\rho' (u-t_i)du ,\quad  
  \vph_{ij} =(\rho (u-t_j) -\rho (u-t_i))du \quad (j\in \{1,\dots ,n\}-\{ i \}). 
\end{align*}
We often write $\vph_{i0}$ for $\vph_0$. 
\begin{Th}\label{th:cohomology-intersection-0}
  For $j,k \in \{1,\dots ,n\} -\{ i \}$, we have 
  \begin{align*}
    I_c ([\vph_0],[\vph_{ij}^{\vee}])
    &=I_c ([\vph_{ij}],[\vph_0^{\vee}])=0, \quad 
    I_c ([\vph_{ij}],[\vph_{ik}^{\vee}])=\tpi \left( \frac{1}{c_i}+\frac{\de_{jk}}{c_j} \right) ,\\
    I_c ([\vph_{ii}],[\vph_{0}^{\vee}])
    &=-\frac{\tpi}{c_i-1}, \\
    I_c ([\vph_{ii}],[\vph_{ii}^{\vee}])
    &=\frac{\tpi}{(c_i-1)(c_i+1)} \Bigg( 
      \frac{1}{c_i}\Big( \tpi c_0 +\sum_{l\neq i} c_l \rho(t_i-t_l) \Big)^2 
      -c_i\frac{\vth'''_{1}(0)}{\vth'_{1}(0)} 
      -\sum_{l\neq i} c_l \rho'(t_i-t_l)
      \Bigg), \\
    I_c ([\vph_{ii}],[\vph_{ij}^{\vee}])
    &=-\frac{\tpi}{c_i(c_i-1)}\Big( \tpi c_0 +\sum_{\substack{l\neq i}} c_l \rho(t_i -t_l) 
      +c_i \rho(t_i -t_j) \Big), 
  \end{align*}
  and the other intersection numbers are obtained by applying the formula 
  $I_c([\vph],[\psi^{\vee}])=-I_c([\psi],[\vph^{\vee}])^{\vee}$. 
\end{Th}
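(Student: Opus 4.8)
The plan is to evaluate every entry directly from the residue formula (\ref{eq:intersecion-residue}), namely $I_c([\vph],[\vph'])=\tpi\sum_{j=1}^n \Res_{u=t_j}(f_j\vph')$, where $f_j$ is the local solution of $\na f_j=\vph$ near $t_j$ furnished by (\ref{eq:local-solution-integral}). The whole computation is then bookkeeping of Laurent expansions. First I would record the expansion of the logarithmic form $\om=d\log T(u)=b(u)du$ at each puncture. Writing $b(u)=\tpi c_0+\sum_{l}c_l\rho(u-t_l)$ and using $\rho(u)=\frac{1}{u}+a_1 u+\cdots$ with $a_1=\frac{1}{3}\vth'''_1(0)/\vth'_1(0)$ (obtained by differentiating $\log\vth_1$ twice), I get near $t_i$ the expansion $b(u)=\frac{c_i}{u-t_i}+b_i^{(0)}+b_i^{(1)}(u-t_i)+\cdots$, where $b_i^{(0)}=\tpi c_0+\sum_{l\neq i}c_l\rho(t_i-t_l)$ and $b_i^{(1)}=c_i a_1+\sum_{l\neq i}c_l\rho'(t_i-t_l)$. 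These two constants carry exactly the data appearing in the asserted formulas.

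Next, for each cocycle $\vph$ I would solve $\na f_j=\vph$, i.e. $f_j'+b\,f_j$ equals the $du$-coefficient of $\vph$, as a Laurent series at $t_j$ by matching coefficients. The recursion shows that when $\vph$ is holomorphic at $t_j$ the solution $f_j$ is holomorphic and vanishes at $t_j$ (since $c_j\neq 0$ forces the constant term to be zero), so it contributes nothing against a form $\vph'$ having at most a simple pole there; the only surviving residues sit at the punctures where $\vph$ itself is singular. For $\vph_0=du$ and $\vph_{ij}$, which have at most simple poles, this at once gives $I_c([\vph_0],[\vph_{ij}^\vee])=I_c([\vph_{ij}],[\vph_0^\vee])=0$, and reading off the residue of $f_i\vph_{ik}$ at $t_i$ (with an extra contribution of $f_j\vph_{ij}$ at $t_j$ only when $j=k$) yields $I_c([\vph_{ij}],[\vph_{ik}^\vee])=\tpi\big(\frac{1}{c_i}+\frac{\de_{jk}}{c_j}\big)$.

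The core of the work is the double-pole cocycle $\vph_{ii}=\rho'(u-t_i)du$. Here the local primitive $f_i=\alpha(u-t_i)^{-1}+\beta+\gamma(u-t_i)+\cdots$ genuinely has a pole, and matching the $(u-t_i)^{-2}$, $(u-t_i)^{-1}$, $(u-t_i)^{0}$ coefficients of $f_i'+b f_i=\rho'(u-t_i)$ gives $\alpha=\frac{-1}{c_i-1}$, $\beta=\frac{b_i^{(0)}}{c_i(c_i-1)}$, and $(c_i+1)\gamma=a_1-b_i^{(0)}\beta-b_i^{(1)}\alpha$; the factor $(c_i+1)$ produced by the recursion is precisely the source of the denominator in the self-intersection. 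Pairing against $\vph_0$ gives the residue $\alpha$, hence $-\tpi/(c_i-1)$; pairing against $\vph_{ij}$ gives residue $\alpha\rho(t_i-t_j)-\beta$, which reproduces the stated formula once $b_i^{(0)}$ is substituted; and pairing against $\vph_{ii}^\vee=\rho'(u-t_i)du$ gives residue $\alpha a_1-\gamma$, which collapses to the claimed expression once $3a_1=\vth'''_1(0)/\vth'_1(0)$ and $b_i^{(1)}$ are inserted. The remaining entries then follow from the duality $I_c([\vph],[\psi^\vee])=-I_c([\psi],[\vph^\vee])^\vee$. I expect the only real obstacle to be control of the third Laurent coefficient $\gamma$ in the self-intersection of $\vph_{ii}$: one must retain the order-zero term of the recursion (the $(c_i+1)\gamma$ and $b_i^{(1)}\alpha$ contributions) and correctly identify the cubic theta constant $\vth'''_1(0)/\vth'_1(0)$, since a slip at this single step is exactly what would create or destroy the $(c_i+1)$ denominator.
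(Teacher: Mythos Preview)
Your proposal is correct and follows essentially the same route as the paper: both reduce everything to the residue formula (\ref{eq:intersecion-residue}), expand $\om$ at each puncture (your $b_i^{(0)},b_i^{(1)}$ are the paper's $\al_0^{(i)},\al_1^{(i)}$), solve $\na f=\vph$ term by term, and then read off the residues; your observation that holomorphic $\vph$ at $t_l$ forces $f_l$ to vanish there is exactly the content of the paper's Lemma~\ref{lem:order-residue=0}. The one place you should take a little extra care is that your three-term recursion for $\gamma$ matches the paper's $b_1$ only after you substitute $\beta$ and $\alpha$ back in, so when you write up the self-intersection $I_c([\vph_{ii}],[\vph_{ii}^{\vee}])$ it is worth displaying the full simplification rather than just asserting it collapses.
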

Since the determinant of the intersection matrix 
$(I_c ([\vph_{1j}],[\vph_{1k}^{\vee}]))_{j,k=0,\dots ,n-1}$ 
is equal to 
\begin{align*}
  -(\tpi)^n\frac{c_1+\cdots +c_{n-1}}{(c_1-1)(c_1+1)c_1\cdots c_{n-1}}
  =\frac{(\tpi)^n c_n}{(c_1-1)(c_1+1)c_1\cdots c_{n-1}}, 
\end{align*}
we can verify that $\{ [\vph_{1j}] \}_{j=0,\dots ,n-1}$ form a basis of $H^1(M;\CL_{\la})$ 
under the condition $c_j \not\in \Z$. 
Similarly, $\{ [\vph_{ij}] \}_{j=0,\dots ,\check{k},\dots ,n}$ with $k\neq 0,i$ also form a basis. 

\subsection{Contiguity relations}\label{subsec:contiguity}
In the case when $\la \in P-\{ 0\}$, the contiguity relations for the Riemann-Wirtinger integral 
can be expressed in terms of intersection forms on twisted cohomology groups. 
Basic idea is same as that in \cite[Section 5]{GM-Pfaffian-contiguity}. 

We consider the shift of the parameters $(c_p ,c_q)\to (c_p +1,c_q-1)$. 
The corresponding local system, denoted by $\CL_{\la}^{(p+,q-)}$, is obtained by 
replacing $(c_p ,c_q,\la)$ by $(c_p +1,c_q-1,\la-t_p+t_q)$, 
because of the constraint $\la +c_0 \tau +c_1 t_1 +\cdots +c_n t_n +c_{\infty}=0$. 
% Under this setting, the parameter $c_{\infty}$ does not change. 
Here, we assume $\la-t_p+t_q \not\in \La_{\tau}$ which is also implicitly assumed in \cite[Section 5]{Mano}. 

For $\vph \in \Om^1_{\la}(*D)(E)$, 
we define $\vph^{(p+,q-)}\in \Om^1_{\la-t_p+t_q}(*D)(E)$ by replacing
$(c_p ,c_q,\la)$ by $(c_p +1,c_q-1,\la-t_p+t_q)$. 
The twisted cohomology group $H^1(M;\CL_{\la}^{(p+,q-)})$ is identified with 
the de Rham cohomology group with respect to $\na^{(p+,q-)}$, where 
$\na^{(p+,q-)} \vph =\na \vph +d\log (\vth_1(u-t_p)/\vth_1(u-t_q))\we \vph$. 
% For $\vph \in \Om^1_{\la-t_p+t_q}(*D)(E)$, we write $[\vph]'$ for 
% its cohomology class in $H^1(M;\CL_{\la}^{(p+,q-)})$. 
Let $I_c^{(p+,q-)}$ denotes the intersection form on 
$H^1(M;\CL_{\la}^{(p+,q-)})\times H^1(M;\CL_{\la}^{(p+,q-)\vee})$. 
If we regard the parameters as indeterminates, we have 
$I_c^{(p+,q-)}([\vph^{(p+,q-)}],[\psi^{(p+,q-)\vee}])=I_c([\vph],[\psi^{\vee}])^{(p+,q-)}$, 
where the last ${}^{(p+,q-)}$ means replacing $(c_p ,c_q,\la)$ with $(c_p +1,c_q-1,\la-t_p+t_q)$, and 
$\psi^{(p+,q-)\vee}=(\psi^{(p+,q-)})^{\vee}$. 

By the same discussion of \cite[Proposition 5.2]{GM-Pfaffian-contiguity}, we obtain the following. 
\begin{Prop}
  The map defined by 
  \begin{align*}
    \CS^p_q :H^1(M;\CL_{\la}^{(p+,q-)}) \ni [\vph] \mapsto 
    \Big[\frac{\vth_1(u-t_p)}{\vth_1(u-t_q)}\cdot \vph \Big] \in H^1(M;\CL_{\la}) 
  \end{align*}
  is a well-defined linear map. 
\end{Prop}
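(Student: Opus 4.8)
The plan is to establish two facts: that multiplication by $g(u):=\vth_1(u-t_p)/\vth_1(u-t_q)$ carries a $1$-form representing a class of $H^1(M;\CL_\la^{(p+,q-)})$ to a $1$-form representing a class of $H^1(M;\CL_\la)$, and that it sends $\na^{(p+,q-)}$-exact forms to $\na$-exact forms. Granting these, the assignment $[\vph]\mapsto[g\vph]$ is well defined on cohomology classes, and its linearity is immediate because multiplication by $g$ is $\C$-linear.

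First I would check the quasi-periodicity so that the image lands in the correct twisted space. Using the transformation law of $\vth_1$ from \S\ref{subsec:theta-function}, one finds $g(u+1)=g(u)$ and $g(u+\tau)=e^{\tpi(t_p-t_q)}g(u)$. A representative $\vph\in\Om^1_{\la-t_p+t_q}(*D)(E)$ satisfies $\vph(u+\tau)=e^{\tpi(\la-t_p+t_q)}\vph(u)$, so the product obeys
\[
(g\vph)(u+1)=(g\vph)(u),\qquad (g\vph)(u+\tau)=e^{\tpi\la}(g\vph)(u),
\]
which is exactly the quasi-periodicity of an element of $\Om^1_\la(*D)(E)$. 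Since $g$ is holomorphic and nowhere zero on $M$ --- its single zero and pole sit at $t_p,t_q\in D$ --- the product $g\vph$ stays holomorphic on $M$ and meromorphic along $D$; the extra pole $g$ contributes at $t_q$ and the zero it contributes at $t_p$ only alter pole orders within $D$, which is permitted. Hence $g\vph\in\Om^1_\la(*D)(E)$.

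The essential step is the intertwining identity, which I would obtain directly from $\na^{(p+,q-)}\vph=\na\vph+d\log g\we\vph$. For a function $f\in\CO_{\la-t_p+t_q}(*D)(E)$ we have $\na^{(p+,q-)}f=df+f\om+f\,d\log g$; multiplying by $g$ and using $g\,d\log g=dg$ together with the Leibniz rule gives
\[
g\cdot\na^{(p+,q-)}f=g\,df+f\,dg+gf\,\om=d(gf)+(gf)\om=\na(gf).
\]
The same quasi-periodicity bookkeeping as above shows $gf\in\CO_\la(*D)(E)$, so $\na(gf)$ genuinely represents $0$ in $H^1(M;\CL_\la)$. Because the cohomology groups are quotients by $\na^{(p+,q-)}$, respectively $\na$, of \emph{functions}, this degree-zero identity is all that is needed, and well-definedness follows.

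I do not anticipate a genuine difficulty: the argument is the standard intertwining principle already used in \cite[Proposition 5.2]{GM-Pfaffian-contiguity}. The one point demanding care is the bookkeeping of quasi-periodicity factors --- confirming that the factor $e^{\tpi(t_p-t_q)}$ acquired by $g$ exactly cancels the shift $\la\mapsto\la-t_p+t_q$ built into $\CL_\la^{(p+,q-)}$, so that the image sits in the $R_\la$-twisted space and not some neighbouring one. This is precisely where the specific choice of multiplier $g=\vth_1(u-t_p)/\vth_1(u-t_q)$, rather than an arbitrary meromorphic function with the same divisor, is indispensable.
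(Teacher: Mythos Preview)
Your argument is correct and matches the paper's approach: the paper does not spell out a proof but simply invokes \cite[Proposition 5.2]{GM-Pfaffian-contiguity}, which is exactly the intertwining identity $g\cdot\na^{(p+,q-)}f=\na(gf)$ together with the quasi-periodicity bookkeeping you carried out. Your verification that $g(u+\tau)=e^{\tpi(t_p-t_q)}g(u)$ precisely compensates the shift $\la\to\la-t_p+t_q$ is the only point specific to the elliptic setting, and you handled it correctly.
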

For a fixed twisted cycle $\ga \in Z_1 (M;\CL_{\la}^{\vee})$, we set 
\begin{align*}
  \bff
  =\tp{\left( \int_{\ga}T(u)\psi_1 ,\dots ,\int_{\ga}T(u)\psi_n \right)}, 
\end{align*}
and $\bff^{(p+,q-)}$ denotes the vector obtained by 
replacing $(c_p ,c_q,\la)$ with $(c_p +1,c_q-1,\la-t_p+t_q)$. 
A relation between $\bff$ and $\bff^{(p+,q-)}$ is called the contiguity relation. 
For $\vph \in \Om^1_{\la}(*D)(E)$, we have 
\begin{align*}
  \int_{\ga} T(u) \cdot \left( \frac{\vth_1(u-t_p)}{\vth_1(u-t_q)}\vph^{(p+,q-)} \right)
  &=\int_{\ga} \left(T(u)|_{(c_p ,c_q)\to (c_p +1,c_q-1)}\right) \cdot \vph^{(p+,q-)}  \\
  &=\left. \Big( \int_{\ga}  T(u)\vph \Big)\right|_{(c_p ,c_q,\la)\to (c_p +1,c_q-1,\la-t_p+t_q)} .
\end{align*}
Therefore, if $S^p_q$ denotes the representation matrix of $\CS^p_q$ with respect to 
the bases $\{ [\psi_j^{(p+,q-)}] \}\subset H^1(M;\CL_{\la}^{(p+,q-)})$ and $\{ [\psi_j] \}\subset H^1(M;\CL_{\la})$, 
the contiguity relation is obtained as $\bff^{(p+,q-)}=S^p_q \cdot \bff$. 
We will give an explicit formula of $S^p_q$ in Corollary \ref{cor:rep-mat-contiguity}.

Since the expression of $\CS^p_q ([\psi_q^{(p+,q-)}])$ as a linear combination of $\{ [\psi_j] \}$ is complicated, 
we first consider the basis $\{[(\vph^{(pq)}_j)^{(p+,q-)}]\}\subset H^1(M;\CL_{\la}^{(p+,q-)})$ 
defined in \S \ref{subsubsec:cohomology-lambda-nonzero}. 
The image of this basis under $\CS^p_q$ can be obtained by the following lemma
which is proved by straightforward calculation. 
\begin{Lem}\label{lem:contiguity-basis}
  The following equalities hold as elements in $\Om^1_{\la}(*D)(E)$: 
  \begin{align}
    \label{eq:contiguity-basis-1}
    \frac{\vth_1(u-t_p)}{\vth_1(u-t_q)}\psi_j^{(p+,q-)}
    &=\frac{\vth_1 (t_j-t_p)}{\vth_1 (t_j-t_q)} \psi_j 
      +\frac{\vth_1 (t_p -t_q) \vth_1 (\la -t_p+t_j)}{\vth_1(t_j-t_q) \vth_1 (\la -t_p+t_q)} \psi_q 
    \quad (j\neq p,q),\\
    \label{eq:contiguity-basis-2}
    \frac{\vth_1(u-t_p)}{\vth_1(u-t_q)}\psi_p^{(p+,q-)}
    &=\frac{\vth_1(\la)}{\vth_1(\la -t_p +t_q)} \psi_q ,\\
    \label{eq:contiguity-basis-3}
    \frac{\vth_1(u-t_p)}{\vth_1(u-t_q)}\phi_p^{(p+,q-)}
    &=\frac{\vth_1(\la)}{\vth_1 (\la-t_p +t_q)} 
      (\rho(t_p -t_q)-\rho (\la) ) \psi_q 
      -\frac{\vth_1'(0)}{\vth_1 (t_p-t_q)} \psi_p.
  \end{align}
\end{Lem}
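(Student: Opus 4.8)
The plan is to treat the three identities separately, reducing each to the theta-function relations already recorded in the first Lemma together with the bare definition of $\frs$. The key observation throughout is that the operation ${}^{(p+,q-)}$ affects $\frs(u-t_j;\la)$ only through the shift $\la\mapsto \mu:=\la-t_p+t_q$, since $\frs$ depends on the parameters solely via $\la$; hence $\psi_j^{(p+,q-)}=\frs(u-t_j;\mu)\,du$ and $\phi_p^{(p+,q-)}=\frac{\pa\frs}{\pa u}(u-t_p;\mu)\,du$. No convergence or cohomological input is needed: every step is an identity of meromorphic $1$-forms in $\Om^1_{\la}(*D)(E)$.

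First I would dispose of (\ref{eq:contiguity-basis-1}): after the substitution it reads exactly as the relation (\ref{eq:theta-rel-2}) with $(t_k,t_l)=(t_p,t_q)$, so it is immediate once we note that the hypothesis $j\neq p,q$ guarantees $t_j,t_p,t_q$ are distinct. For (\ref{eq:contiguity-basis-2}) I would simply insert the definition $\frs(u;\nu)=\vth_1(u-\nu)\vth_1'(0)/(\vth_1(u)\vth_1(-\nu))$: with $\nu=\mu$ the argument $(u-t_p)-\mu$ becomes $u-t_q-\la$ and the factor $\vth_1(u-t_p)$ cancels against $F(u):=\vth_1(u-t_p)/\vth_1(u-t_q)$, leaving $\frac{\vth_1(-\la)}{\vth_1(t_p-t_q-\la)}\frs(u-t_q;\la)$; applying the oddness $\vth_1(-x)=-\vth_1(x)$ in both numerator and denominator turns the coefficient into $\vth_1(\la)/\vth_1(\mu)$, as claimed.

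The substantive step is (\ref{eq:contiguity-basis-3}), and here the plan is to differentiate (\ref{eq:contiguity-basis-2}) in $u$ rather than to manipulate the order-two pole of $\phi_p$ directly. Using $F'(u)=F(u)\big(\rho(u-t_p)-\rho(u-t_q)\big)$, differentiation of $F(u)\frs(u-t_p;\mu)=\frac{\vth_1(\la)}{\vth_1(\mu)}\frs(u-t_q;\la)$ and a second use of (\ref{eq:contiguity-basis-2}) to eliminate the $F'$-term give
\[
F(u)\frac{\pa\frs}{\pa u}(u-t_p;\mu)=\frac{\vth_1(\la)}{\vth_1(\mu)}\Big(\frac{\pa\frs}{\pa u}(u-t_q;\la)-\big(\rho(u-t_p)-\rho(u-t_q)\big)\frs(u-t_q;\la)\Big).
\]
Then I would apply the logarithmic-derivative identity $\frac{\pa\frs}{\pa u}(u-t_q;\la)=\frs(u-t_q;\la)\big(\rho(u-t_q-\la)-\rho(u-t_q)\big)$, whereupon the bracket collapses to $\frs(u-t_q;\la)\big(\rho(u-t_q-\la)-\rho(u-t_p)\big)$. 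Subtracting the claimed $\psi_q$-term $(\rho(t_p-t_q)-\rho(\la))\frs(u-t_q;\la)$ leaves precisely the combination governed by (\ref{eq:theta-rel-1}) with $(j,k)=(p,q)$, which converts $\frs(u-t_q;\la)$ times the residual $\rho$-combination into a multiple of $\frs(u-t_p;\la)$. What remains is the scalar identity $\frs(t_p-t_q;\la)=\frac{\vth_1(\mu)}{\vth_1(\la)}\frac{\vth_1'(0)}{\vth_1(t_p-t_q)}$, which follows at once from the definition of $\frs$ and oddness of $\vth_1$, and matching the two coefficients against $\psi_q$ and $\psi_p$ yields (\ref{eq:contiguity-basis-3}).

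The only place demanding care is this last identity: one must keep track of the cancellation between the genuine $u$-derivative of $\frs$ and the $F'$-contribution so that the double pole at $t_p$ carried by $\phi_p^{(p+,q-)}$ is correctly resolved into the simple-pole generators $\psi_p$ and $\psi_q$, and the sign bookkeeping arising from the several applications of $\vth_1(-x)=-\vth_1(x)$ (in particular $\vth_1(\mu)=-\vth_1(t_p-t_q-\la)$) must be carried out consistently so that the final coefficients match exactly.
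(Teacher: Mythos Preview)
Your proof is correct and follows essentially the same route as the paper: (\ref{eq:contiguity-basis-1}) is exactly (\ref{eq:theta-rel-2}) with $(t_k,t_l)=(t_p,t_q)$, (\ref{eq:contiguity-basis-2}) is immediate from the definition of $\frs$, and (\ref{eq:contiguity-basis-3}) is reduced to (\ref{eq:theta-rel-1}) with $(j,k)=(p,q)$ after arriving at the intermediate expression $\dfrac{\vth_1(\la)}{\vth_1(\la-t_p+t_q)}\,\frs(u-t_q;\la)\big(\rho(u-t_q-\la)-\rho(u-t_p)\big)$. The only cosmetic difference is how that intermediate expression is obtained: the paper unpacks the definition of $\dfrac{\pa\frs}{\pa u}(u-t_p;\mu)$ and multiplies by $\vth_1(u-t_p)/\vth_1(u-t_q)$ directly, whereas you differentiate the already-established identity (\ref{eq:contiguity-basis-2}) and use the logarithmic-derivative formula for $\frs$; the two computations are line-by-line equivalent.
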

\begin{proof}
  The equality (\ref{eq:contiguity-basis-2}) is obvious. 
  % The equality (\ref{eq:contiguity-basis-1}) follows from (\ref{eq:theta-rel-2}). 
  % The equality (\ref{eq:contiguity-basis-3}) is shown by using (\ref{eq:theta-rel-1}). 
  The equalities (\ref{eq:contiguity-basis-1}) and (\ref{eq:contiguity-basis-3})
  follow from (\ref{eq:theta-rel-2}) and (\ref{eq:theta-rel-1}), respectively. 
  As an example, we show (\ref{eq:contiguity-basis-3}): 
  \begin{align*}
    &\frac{\vth_1(u-t_p)}{\vth_1(u-t_q)}\cdot \frac{\pa}{\pa u}\frs (u-t_p ;\la-t_p+t_q) \\
    &=\frac{(\vth_1'(u-\la -t_q) \vth_1(u-t_p)-\vth_1(u-\la -t_q) \vth_1'(u-t_p)) \vth_1'(0)}
      {\vth_1(u-t_p) \vth_1 (u-t_q) \vth_1 (-\la+t_p -t_q)} \\
    &=\frac{\vth_1'(0)}{\vth_1 (-\la+t_p -t_q)}
      \frac{\vth_1(u-\la -t_q)}{\vth_1 (u-t_q) } \left( 
      \frac{\vth_1'(u-\la -t_q)}{\vth_1 (u-\la-t_q)}
      -\frac{\vth_1'(u-t_p)}{\vth_1(u-t_p)}
      \right) \\
    &=\frac{\vth_1(-\la)}{\vth_1 (-\la+t_p -t_q)}
      \frs (u-t_q;\la) (\rho(u-\la -t_q)-\rho (u-t_p) ) \\
    &=\frac{\vth_1(-\la)}{\vth_1 (-\la+t_p -t_q)} \Big(
      \frs (u-t_q;\la) (\rho(t_p -t_q)-\rho (\la) )
      -\frs (u-t_p;\la) \frs (t_p-t_q;\la) 
      \Big)
      . \qedhere
  \end{align*}
\end{proof}
By using the basis $\{[\eta^{(pq)\vee}_{j}]\}_{j=1,\dots ,n}$ defined in \S \ref{subsubsec:cohomology-lambda-nonzero}, 
we can express $\CS^p_q$ in terms of the intersection form. 
\begin{Th}
  Suppose $t_p-t_q \pm \la \not\in \La_{\tau}$. 
  For any $\vph \in \Om^1_{\la}(*D)(E)$, we have 
  \begin{align}
    \nonumber
    &\CS^p_q (\vph^{(p+,q-)}) \\
    \label{eq:contiguity-expression-1}
    &=\sum_{j=1}^n \frac{I_c^{(p+,q-)}(\vph^{(p+,q-)},\eta^{(pq)(p+,q-)\vee}_j)}
      {I_c^{(p+,q-)}(\vph^{(pq)(p+,q-)}_j,\eta^{(pq)(p+,q-)\vee}_j)}
      \CS^p_q(\vph^{(pq)(p+,q-)}_j) \\
    \nonumber
    &=\sum_{j\neq p,q} \frac{I_c^{(p+,q-)}(\vph^{(p+,q-)},\eta^{(pq)(p+,q-)\vee}_j)}
      {\tpi}\frac{c_j \vth_1 (t_j-t_p)}{\vth_1 (t_j-t_q)} \psi_j 
      +\frac{I_c^{(p+,q-)}(\vph^{(p+,q-)},\eta^{(pq)(p+,q-)\vee}_q)}{\tpi} 
      \frac{c_p \vth_1(\la-t_p+t_q)}{\vth_1 (\la)} \psi_p\\
    \nonumber
    &\quad 
      +\Big(
      \frac{I_c^{(p+,q-)}(\vph^{(p+,q-)},\eta^{(pq)(p+,q-)\vee}_p)}{\tpi}
      \frac{(c_p+1)\vth_1(\la)}{\vth_1(\la -t_p +t_q)} \\
    \nonumber
    &\qquad \qquad 
      -\frac{I_c^{(p+,q-)}(\vph^{(p+,q-)},\eta^{(pq)(p+,q-)\vee}_q)}{\tpi}
      \frac{c_p \vth_1 (t_p -t_q)}{\vth_1'(0)} (\rho(t_p -t_q)-\rho (\la) ) \\
    \label{eq:contiguity-expression-2}
    &\qquad \qquad 
      +\sum_{j\neq p,q} \frac{I_c^{(p+,q-)}(\vph^{(p+,q-)},\eta^{(pq)(p+,q-)\vee}_j)}{\tpi}
      \frac{c_j \vth_1 (t_p-t_q) \vth_1 (\la -t_p+t_j)}{\vth_1(t_j-t_q) \vth_1 (\la -t_p+t_q)}
      \Big) \psi_q .
  \end{align}
  Here, we omit $[~]$ and use notations 
  $\vph^{(pq)(p+,q-)}_j=(\vph^{(pq)}_j)^{(p+,q-)}$ and 
  $\eta^{(pq)(p+,q-)\vee}_j=((\eta^{(pq)}_j)^{(p+,q-)})^{\vee}$
  for simplicity. 
\end{Th}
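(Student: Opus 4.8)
The plan is to combine the dual-basis expansion from Proposition \ref{prop:cohomology-intersection-2} with the explicit images computed in Lemma \ref{lem:contiguity-basis}, working throughout inside $H^1(M;\CL_{\la}^{(p+,q-)})$. First I would transport Proposition \ref{prop:cohomology-intersection-2} under the $(p+,q-)$ shift by means of the compatibility $I_c^{(p+,q-)}(\vph^{(p+,q-)},\psi^{(p+,q-)\vee})=I_c(\vph,\psi^{\vee})^{(p+,q-)}$: since $0^{(p+,q-)}=0$, the off-diagonal pairings $I_c^{(p+,q-)}(\vph^{(pq)(p+,q-)}_j,\eta^{(pq)(p+,q-)\vee}_k)$ still vanish for $j\neq k$, and the diagonal ones are the shifts of the nonzero constants recorded there. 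The hypothesis $t_p-t_q\pm\la\not\in\La_{\tau}$ guarantees that these shifted quantities, and the dual system $\{\eta^{(pq)(p+,q-)\vee}_j\}$ itself, are well-defined. Hence $\{[\vph^{(pq)(p+,q-)}_j]\}_{j=1,\dots,n}$ is a basis dual (up to the diagonal normalisation) to $\{[\eta^{(pq)(p+,q-)\vee}_j]\}$, so for any $\vph\in\Om^1_{\la}(*D)(E)$ we obtain
\begin{align*}
  [\vph^{(p+,q-)}]=\sum_{j=1}^n
  \frac{I_c^{(p+,q-)}(\vph^{(p+,q-)},\eta^{(pq)(p+,q-)\vee}_j)}
  {I_c^{(p+,q-)}(\vph^{(pq)(p+,q-)}_j,\eta^{(pq)(p+,q-)\vee}_j)}
  [\vph^{(pq)(p+,q-)}_j].
\end{align*}
Applying the linear map $\CS^p_q$ of the preceding proposition term by term gives (\ref{eq:contiguity-expression-1}) at once.

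To reach (\ref{eq:contiguity-expression-2}) I would substitute Lemma \ref{lem:contiguity-basis} for the images $\CS^p_q(\vph^{(pq)(p+,q-)}_j)$, recalling $\vph^{(pq)}_j=\psi_j$ for $j\neq q$ and $\vph^{(pq)}_q=\phi_p$, so that the cases $j\neq p,q$, $j=p$, $j=q$ are governed by (\ref{eq:contiguity-basis-1}), (\ref{eq:contiguity-basis-2}), (\ref{eq:contiguity-basis-3}) respectively. Simultaneously I would insert the shifted diagonal denominators: $\tpi/c_j$ for $j\neq p,q$, and $\tpi/(c_p+1)$ for $j=p$. The only denominator needing work is $j=q$: starting from $I_c([\phi_p],[\psi_q^{\vee}])=-\tpi\,\frs(t_p-t_q;-\la)/(c_p-1)$ and applying $(c_p,c_q,\la)\mapsto(c_p+1,c_q-1,\la-t_p+t_q)$, I would use the definition of $\frs$ to compute
\begin{align*}
  \frs(t_p-t_q;-\la+t_p-t_q)
  =\frac{\vth_1(\la)\vth_1'(0)}{\vth_1(t_p-t_q)\vth_1(\la-t_p+t_q)},
\end{align*}
so this denominator becomes $-\tpi\,\vth_1(\la)\vth_1'(0)/\big(c_p\,\vth_1(t_p-t_q)\vth_1(\la-t_p+t_q)\big)$.

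Finally I would collect the coefficients of $\psi_p$, $\psi_q$, and $\psi_j$ ($j\neq p,q$). The $\psi_j$ terms for $j\neq p,q$ arise solely from (\ref{eq:contiguity-basis-1}) and reproduce the first line of (\ref{eq:contiguity-expression-2}); the $\psi_p$ term arises solely from the $-\vth_1'(0)/\vth_1(t_p-t_q)$ summand of (\ref{eq:contiguity-basis-3}), where the factors $\vth_1'(0)$ and $\vth_1(t_p-t_q)$ cancel against the reciprocal of the rewritten $j=q$ denominator to leave the clean coefficient $c_p\,\vth_1(\la-t_p+t_q)/\vth_1(\la)$; and the $\psi_q$ term gathers three pieces, coming from (\ref{eq:contiguity-basis-2}), from the $(\rho(t_p-t_q)-\rho(\la))$ summand of (\ref{eq:contiguity-basis-3}), and from the $\psi_q$ parts of (\ref{eq:contiguity-basis-1}), matching the three groups inside the large parentheses of (\ref{eq:contiguity-expression-2}). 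The main obstacle is purely bookkeeping: carrying the $(p+,q-)$ shift consistently through every intersection number and theta-quotient, and verifying the cancellation in the $\psi_p$ coefficient; no input beyond Lemma \ref{lem:contiguity-basis} and Proposition \ref{prop:cohomology-intersection-2} is required.
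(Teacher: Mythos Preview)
Your proposal is correct and follows essentially the same route as the paper: both arguments establish (\ref{eq:contiguity-expression-1}) via the dual-basis property of $\{[\vph^{(pq)(p+,q-)}_j]\}$ and $\{[\eta^{(pq)(p+,q-)\vee}_j]\}$ inherited from Proposition~\ref{prop:cohomology-intersection-2} under the $(p+,q-)$ shift, and then derive (\ref{eq:contiguity-expression-2}) by substituting Lemma~\ref{lem:contiguity-basis} together with the shifted diagonal intersection numbers. The paper's write-up is terser (it simply checks (\ref{eq:contiguity-expression-1}) on the basis and cites the two inputs for (\ref{eq:contiguity-expression-2})), whereas you have spelled out the bookkeeping, including the explicit evaluation of the shifted $j=q$ denominator; the content is the same.
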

\begin{proof}
  If we set $\vph =\vph^{(pq)}_j$ in the right-hand side of (\ref{eq:contiguity-expression-1}), 
  then it coincides with $\CS^p_q(\vph^{(pq)(p+,q-)}_j)$. 
  Since $\{[(\vph^{(pq)}_j)^{(p+,q-)}]\}$ form a basis, 
  the equality (\ref{eq:contiguity-expression-1}) holds. 
  The expression (\ref{eq:contiguity-expression-2}) follows from 
  Proposition \ref{prop:cohomology-intersection-2} and 
  Lemma \ref{lem:contiguity-basis}. 
\end{proof}
Though our expression seems to be complicated, 
we can treat with not only $\psi_j$, but also 
any $\vph \in \Om^1_{\la}(*D)(E)$. 
Once we obtain the intersection numbers 
$I_c(\vph,\eta^{(pq)\vee}_j)$'s, 
% $I_c^{(p+,q-)}(\vph^{(p+,q-)},\eta^{(pq)(p+,q-)\vee}_j)$'s, 
we can express $\CS^p_q(\vph^{(p+,q-)})$ as a linear combination of $\psi_j$'s.  
\begin{Rem}
  In general, to derive the contiguity relations in terms of twisted cohomology groups, 
  we need to find some $f\in \CO_{\la}(*D)(E)$ such that 
  $\CS^p_q \vph -\na f$ becomes a linear combination of the basis $\{ [\psi_j] \}_{j=1,\dots ,n}$
  (e.g., \cite[Theorem 5.1]{Mano}).   
  Our method can be avoid this difficulty.   
\end{Rem}

Similarly to the connection problems, we can obtain the representation matrix of $\CS^p_q$. 
We use
the diagonal matrices $C_{\psi \psi}$ and $C_{\vph \eta}$ whose diagonal entries
% are diagonal and their explicit expressions 
are given in 
Theorem \ref{th:cohomology-intersection-1} and Proposition \ref{prop:cohomology-intersection-2}, 
and we
set 
$C_{\psi \eta}=(I_c([\psi_j],[\eta_k^{(pq)\vee}]))_{j,k=1,\dots,n}$.
% $C_{\psi \psi}=(I_c([\psi_j],[\psi_k^{\vee}]))_{j,k=1,\dots,n}$, 
% $C_{\vph \eta}=(I_c([\vph^{(pq)}_j],[\eta_k^{(pq)\vee}]))_{j,k=1,\dots,n}$, and
% An explicit formula of $C_{\psi \eta}$ is given in \S \ref{subsubsec:homology-int-matrices}. 
By definition of $\eta_j^{(pq)}$, we have a simple relation 
\begin{align*}
  \tp (\eta_1^{(pq)},\dots ,\eta_n^{(pq)})
  =A \cdot \tp (\psi_1 ,\dots ,\psi_n), \quad A=\id_n+A',
\end{align*}
where 
% $\id_n$ is the identity matrix of size $n$ and 
$A'$ is an $n\times n$ matrix 
whose entries are zero except for the $(p,q)$-entry 
\begin{align*}
  \frac{1}{\frs(t_p-t_q;\la)}\Big(
  \frac{1}{c_p}\Big(\tpi c_0 +\sum_{k\neq p} c_k \rho(t_p -t_k) \Big) -\rho(-\la) \Big)
\end{align*}
and the $(j,q)$-entry $-\frac{\frs (t_p-t_j;\la)}{\frs (t_p-t_q;\la)}$
for $j\neq p,q$.
An explicit formula of $C_{\psi \eta}$ is given by $C_{\psi \eta}=C_{\psi \psi}\cdot \tp{A}^{\vee}$. 
\begin{Cor}[\cite{Mano}]\label{cor:rep-mat-contiguity}
  For a matrix $C$ whose entries belong to 
  $K(c_{*},\la,t_{*})$, 
  % $\C(c_{*},\la)$, 
  we set 
  % \\
  $C^{(p+,q-)}=C|_{(c_p ,c_q,\la)\to (c_p +1,c_q-1,\la-t_p+t_q)}$. 
  Let $(S^p_q)'$ be the matrix satisfying 
  \begin{align*}
    \CS^p_q ~\tp ([\vph^{(pq)(p+,q-)}_1],\dots ,[\vph^{(pq)(p+,q-)}_n])
    =(S^p_q)'\cdot \tp ([\psi_1],\dots ,[\psi_n]), 
  \end{align*}
  the entries of which are given in  
  Lemma \ref{lem:contiguity-basis}. 
  The matrix defined by
  \begin{align*}
    S^p_q=(C_{\psi \eta} C_{\vph \eta}^{-1} )^{(p+,q-)} \cdot (S^p_q)'
  \end{align*}
  satisfies the following equalities: 
  \begin{align}
    \label{eq:rep-mat-contiguity}
    &\CS^p_q ~\tp ([\psi_1^{(p+,q-)}],\dots ,[\psi_n^{(p+,q-)}])
      =S^p_q\cdot \tp ([\psi_1],\dots ,[\psi_n]), \\
    \label{eq:relation-mat-contiguity}
    &S^p_q \cdot C_{\psi \psi}=(C_{\psi \psi}\cdot (\tp{S^p_q})^{\vee})^{(p+,q-)}.
  \end{align}
\end{Cor}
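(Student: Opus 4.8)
The plan is to establish the two displayed identities in turn, the first by a change-of-basis computation and the second by exhibiting the compatibility of $\CS^p_q$ with the intersection form, exactly as in the connection-problem corollaries.

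For (\ref{eq:rep-mat-contiguity}) I would first work in $H^1(M;\CL_{\la})$. Since Proposition \ref{prop:cohomology-intersection-2} makes $C_{\vph\eta}$ diagonal, the bases $\{[\vph^{(pq)}_j]\}$ and $\{[\eta^{(pq)}_k]\}$ are dual up to these diagonal scalars; pairing $\psi_i$ against $[\eta^{(pq)\vee}_k]$ therefore yields the coordinates of $\psi_i$ in the $\vph^{(pq)}$-basis, namely
\[
  \tp([\psi_1],\dots,[\psi_n])=C_{\psi\eta}C_{\vph\eta}^{-1}\cdot \tp([\vph^{(pq)}_1],\dots,[\vph^{(pq)}_n]).
\]
Applying the substitution $(p+,q-)$ and using $I_c^{(p+,q-)}([\vph^{(p+,q-)}],[\psi^{(p+,q-)\vee}])=I_c([\vph],[\psi^{\vee}])^{(p+,q-)}$, the same relation holds between $\{[\psi_j^{(p+,q-)}]\}$ and $\{[\vph^{(pq)(p+,q-)}_j]\}$ in $H^1(M;\CL_{\la}^{(p+,q-)})$, with the intersection matrices replaced by their $(p+,q-)$-versions. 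Finally I would apply the linear map $\CS^p_q$ and substitute the relation $\CS^p_q\,\tp([\vph^{(pq)(p+,q-)}_1],\dots)=(S^p_q)'\,\tp([\psi_1],\dots)$ coming from Lemma \ref{lem:contiguity-basis}; this produces exactly $\CS^p_q\,\tp([\psi_1^{(p+,q-)}],\dots)=(C_{\psi\eta}C_{\vph\eta}^{-1})^{(p+,q-)}(S^p_q)'\,\tp([\psi_1],\dots)$, which is (\ref{eq:rep-mat-contiguity}). This step is routine once the duality in Proposition \ref{prop:cohomology-intersection-2} is in hand, the only standing hypotheses being $t_p-t_q\pm\la\notin\La_{\tau}$ so that all bases exist.

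For (\ref{eq:relation-mat-contiguity}) I would introduce the dual contiguity operator $\CS^{p\vee}_q:H^1(M;\CL_{\la}^{(p+,q-)\vee})\to H^1(M;\CL_{\la}^{\vee})$, $[\chi]\mapsto[r^{-1}\chi]$ with $r=\vth_1(u-t_p)/\vth_1(u-t_q)$; a one-line check that $\na^{\vee}(r^{-1}\chi)=r^{-1}\na^{(p+,q-)\vee}\chi$ shows it is well-defined. The heart of the matter is the adjunction $I_c(\CS^p_q[\vph],\CS^{p\vee}_q[\chi])=I_c^{(p+,q-)}([\vph],[\chi])$, which I would deduce from the residue formula (\ref{eq:intersecion-residue}): if $f_j$ is a local $\na^{(p+,q-)}$-primitive of $\vph$ near $t_j$, then $rf_j$ is a local $\na$-primitive of the representative $r\vph$ of $\CS^p_q[\vph]$, and in $\Res_{u=t_j}\big((rf_j)(r^{-1}\chi)\big)$ the factors $r$ and $r^{-1}$ cancel, returning $\Res_{u=t_j}(f_j\chi)$ and hence $I_c^{(p+,q-)}([\vph],[\chi])$. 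Writing this adjunction in the bases $\{[\psi_i^{(p+,q-)}]\}$, $\{[\psi_j]\}$ and their $^{\vee}$-duals, with $C_{\psi\psi}$ as Gram matrix, turns it into $S^p_q\,C_{\psi\psi}\,\tp{\widetilde S}=C_{\psi\psi}^{(p+,q-)}$, where $\widetilde S$ is the matrix of $\CS^{p\vee}_q$.

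The remaining and most delicate step is to identify $\widetilde S$ with $\big(((S^p_q)^{\vee})^{(p+,q-)}\big)^{-1}$. The conceptual reason is that $\CS^{p\vee}_q$ is the inverse of the operator obtained from $\CS^p_q$ by first applying the involution $^{\vee}$ (which fixes the parameter-free factor $r$ but converts multiplication-by-$r$ into a map of the $^{\vee}$-dual systems) and then the shift $(p+,q-)$; the inversion appears precisely because passing from $r$ to $r^{-1}$ inverts the multiplication map. Granting this, $\tp{\widetilde S}=\big(((\tp{S^p_q})^{\vee})^{(p+,q-)}\big)^{-1}$, and substituting into $S^p_q C_{\psi\psi}\tp{\widetilde S}=C_{\psi\psi}^{(p+,q-)}$ and right-multiplying by $((\tp{S^p_q})^{\vee})^{(p+,q-)}$ gives $S^p_q C_{\psi\psi}=(C_{\psi\psi}(\tp{S^p_q})^{\vee})^{(p+,q-)}$, i.e. (\ref{eq:relation-mat-contiguity}). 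I expect the main obstacle to lie here: keeping straight the non-commuting operations $^{\vee}$ and $(p+,q-)$ together with the inversion caused by the $r\leftrightarrow r^{-1}$ switch, and justifying the adjunction residue computation at $t_p$ and $t_q$, where $r$ has a zero and a pole respectively, so that $rf_j$ and $r^{-1}\chi$ still lie in the meromorphic sheaves for which (\ref{eq:intersecion-residue}) applies.
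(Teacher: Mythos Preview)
Your proof of (\ref{eq:rep-mat-contiguity}) matches the paper's exactly: the change-of-basis relation
$\tp([\psi_1],\dots,[\psi_n])=C_{\psi\eta}C_{\vph\eta}^{-1}\cdot \tp([\vph^{(pq)}_1],\dots,[\vph^{(pq)}_n])$
is shifted by $(p+,q-)$ and then pushed through $\CS^p_q$ via Lemma \ref{lem:contiguity-basis}.

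For (\ref{eq:relation-mat-contiguity}) your argument is correct but takes an unnecessary detour. The paper defines the dual operator in the \emph{same} direction as $\CS^p_q$, namely
\[
  (\CS^p_q)^{\vee}:H^1(M;\CL_{\la}^{\vee})\longrightarrow H^1(M;\CL_{\la}^{(p+,q-)\vee}),\qquad [\psi]\longmapsto[r\psi],
\]
still multiplication by $r=\vth_1(u-t_p)/\vth_1(u-t_q)$, not by $r^{-1}$. The adjunction then reads
$I_c(\CS^p_q[\vph],[\psi])=I_c^{(p+,q-)}([\vph],(\CS^p_q)^{\vee}[\psi])$,
proved by the same residue trick you describe (and referenced to \cite[Proposition 5.8]{GM-Pfaffian-contiguity}). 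The point is that $(\CS^p_q)^{\vee}$ is literally the map $\CS^p_q$ with the target parameters replaced by those of $H^1(M;\CL_{\la}^{(p+,q-)\vee})$; since $r$ is unchanged, its matrix in the $\psi^{\vee}$--bases is directly $((S^p_q)^{\vee})^{(p+,q-)}$, and (\ref{eq:relation-mat-contiguity}) follows at once. Your operator $\CS^{p\vee}_q$ is the inverse of this $(\CS^p_q)^{\vee}$, so everything you write is equivalent, but the inversion you flag as ``the most delicate step'' is self-imposed and disappears with the paper's choice. Likewise, the worry about $r$ having a zero and a pole at $t_p,t_q$ is harmless: multiplication by $r^{\pm 1}$ preserves the sheaves $\CO_{\bullet}(*D)$ and $\Om^1_{\bullet}(*D)$, so (\ref{eq:intersecion-residue}) applies without modification.
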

\begin{proof}
  Since we have 
  $\tp([\psi_1],\dots ,[\psi_n])=C_{\psi \eta} C_{\vph \eta}^{-1} \cdot \tp([\vph^{(pq)}_1],\dots ,[\vph^{(pq)}_n])$, 
  the equality (\ref{eq:rep-mat-contiguity}) is easily obtained. 
  The equality (\ref{eq:relation-mat-contiguity}) follows from the property
  $I_c(\CS^p_q ([\vph]), [\psi])=I_c^{(p+,q-)}([\vph], (\CS^p_q)^{\vee}([\psi]))$, where 
  $(\CS^p_q)^{\vee}:H^1(M;\CL_{\la}^{\vee})\ni [\vph]\mapsto 
  [(\vth_1(u-t_p)/\vth_1(u-t_q))\cdot \vph]\in H^1(M;\CL_{\la}^{(p+,q-)\vee})$. 
  This property can be shown in the same manner as 
  \cite[Proposition 5.8]{GM-Pfaffian-contiguity}. 
\end{proof}
\begin{Ex}
  As in \cite[Theorem 5.1]{Mano}, the expression of $\CS^p_q(\psi^{(p+,q-)}_q)$ is complicated. 
  According to \cite{Mano}, the coefficient of $\psi_q$ should be 
  \begin{align}
    \label{eq:example-contiguity-psi_q-1}
    \frac{\vth_1 (t_q -t_p)}{\vth_1'(0)} \Big( 
    \rho(t_q -t_p) -\rho(\la-t_p +t_q)
    +\frac{1}{1-c_q} \Big(\tpi c_0 +\sum_{j\neq q} c_j \rho(t_q -t_j) -c_q \rho (\la) \Big) 
    \Big) .
  \end{align}
  Let us verify that it coincides with the $(q,q)$-entry of $S^p_q$. 
  Since the entries of the $q$-th row of $C_{\psi \eta}$ are 
  \begin{align*}
    \frac{(C_{\psi \eta})_{qq}}{\tpi}
    &=\frac{1}{c_q} ,\qquad 
    \frac{(C_{\psi \eta})_{qj}}{\tpi} 
    =-\frac{1}{c_q} \frac{\vth_1(t_p-t_j+\la)\vth_1(t_p-t_q)}{\vth_1(t_p-t_j)\vth_1(t_p-t_q+\la)}
      \ (j\neq p,q), \\
    \frac{(C_{\psi \eta})_{qp}}{\tpi}
    &=\frac{1}{c_p} \frac{\vth_1(t_p -t_q)}{\vth_1'(0)}
      \Big( \frac{1}{c_q}\Big(\tpi c_0 +\sum_{j\neq p,q} c_j \rho(t_p -t_j) -c_p \rho(\la) \Big) 
      +\rho(t_p -t_q)  \Big)
      \frac{\vth_1(\la)}{\vth_1(t_p -t_q +\la)}, 
  \end{align*}
  the $(q,q)$-entry of $S^p_q$ is equal to 
  \begin{align}
    \nonumber
    &\frac{\vth_1 (t_p -t_q)}{\vth_1'(0)} \Big(
    -\frac{c_p}{c_q-1} \rho(t_p -t_q) +\frac{c_p}{c_q-1}\rho (\la)
      \\
    \nonumber
    &\quad 
      +\frac{1}{c_q-1} \Big(\tpi c_0
      +\sum_{j\neq p,q} c_j \rho(t_p -t_j) 
      -(c_p+1) \rho(\la-t_p +t_q) \Big)
      +\rho(t_p -t_q) \\
    \label{eq:example-contiguity-psi_q-2}
    &\quad 
      -\sum_{j\neq p,q} 
      \frac{c_j}{c_q-1}\cdot 
      \frac{\vth_1'(0)\vth_1 (\la-t_p +t_j)\vth_1(\la-t_j+t_q)\vth_1(t_p-t_q)}
      {\vth_1(\la)\vth_1 (\la-t_p+t_q)\vth_1(t_p-t_j)\vth_1(t_j-t_q) }
      \Big) .
  \end{align}
  By (\ref{eq:theta-rel-1}), we have 
  \begin{align*}
    \frac{\vth_1'(0)\vth_1 (\la-t_p +t_j)\vth_1(\la-t_j+t_q)\vth_1(t_p-t_q)}
    {\vth_1(\la)\vth_1 (\la-t_p+t_q)\vth_1(t_p-t_j)\vth_1(t_j-t_q) }
    =\rho(t_p-t_j)+\rho(t_j-t_q)-\rho(t_p-t_q-\la)-\rho(\la) . 
  \end{align*}
  Using this relation and $\sum_{j\neq p,q}c_j =-c_p -c_q$, we can show that 
  (\ref{eq:example-contiguity-psi_q-2}) coincides with (\ref{eq:example-contiguity-psi_q-1}). 
  Note that our computation does not require the relation 
  $[\na f]=0$ in $H^1(M;\CL_{\la})$ 
  for some $f\in \CO_{\la}(*D)(E)$. 
\end{Ex}

\section{Computation of intersection numbers}\label{sec:computation}
We give precise computations of the intersection numbers. 
% In this section, we omit $[~]$ that means a (co)homology class, 
% because the author considers there is no confusion. 

\subsection{Intersection numbers of twisted cycles}\label{subsec:eval-homology-intersection}
Since the intersection numbers that we use in this paper are so many, 
we cannot explain all of them. 

\subsubsection{Fact \ref{fact:homology-intersection-basis} and Corollary \ref{cor:homology-intersection-interval}}
Fact \ref{fact:homology-intersection-basis} is computed in \cite[Proposition 3.4.1]{Ghazouani-Pirio}. 
A detailed computation of $I_h([\ga_{1\infty}],[\ga_{1\infty}^{\vee}])$ is given in \cite{Ghazouani-Pirio}.
For readers' convenience, we explain $I_h([\ga_{1j}],[\ga_{1\infty}^{\vee}])$ ($j=2,\dots ,n$) and 
$I_h([\ga_{10}],[\ga_{1\infty}^{\vee}])$ in detail. 
% The self-intersection numbers $I_h([\ga_{10}],[\ga_{10}^{\vee}])$ and $I_h([\ga_{1\infty}],[\ga_{1\infty}^{\vee}])$ 
% are computed in \S \ref{subsubsec:homology-intersection-0-infty}. 

First of all, 
we consider the intersection numbers $I_h ([\ga_{jk}] ,[\ga_{j'k'}^{\vee}])$ 
for $j,j',k,k'\in \{1,\dots,n\}$ satisfying $j<k$, $j'<k'$. 
As mentioned above Corollary \ref{cor:homology-intersection-interval}, the cases of $j,j'\geq 2$ 
follow from that of $j,j'=1$. 
In fact, it is not difficult to compute all the cases directly. 
A method to compute is quite similar to \cite{KY}, see also \cite[Fact 4.2]{M-FD}.  
For example, we have 
\begin{align*}
  I_h ([\ga_{12}] ,[\ga_{23}^{\vee}])
  =\frac{-1}{e^{\tpi c_2}-1} \cdot (-1) \cdot 1
  =\frac{-1}{1-e^{\tpi c_2}}.
\end{align*}
Here, the first factor is the coefficient of $s_2$ in $\ga_{12}$, 
the second ``$(-1)$'' is the local intersection multiplicity, 
and the third ``$1$'' indicates the difference of the branches 
(see Figure \ref{fig:homology-intersection-12-23}). 
\begin{figure}
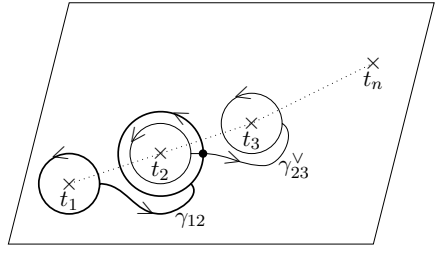

  \centering
  \scalebox{0.8}{\RWpictureH}
  \caption{The twisted cycles $\ga_{12}$ and $\ga_{23}^{\vee}$
    ($\bullet$ is the intersection point.)}
  \label{fig:homology-intersection-12-23}
\end{figure}

Next, we compute $I_h([\ga_{1j}],[\ga_{1\infty}^{\vee}])$ ($j=2,\dots ,n$). 
By Figure \ref{fig:homology-intersection-1j-1infty}, we can compute it as 
\begin{align*}
  I_h ([\ga_{1j}] ,[\ga_{1\infty}^{\vee}]) 
  =\frac{1-e^{\tpi c_{\infty}}}{e^{-\tpi c_1}-1} \cdot 1 \cdot 1
  =\frac{e^{\tpi c_1}(1-e^{\tpi c_{\infty}})}{1-e^{\tpi c_1}} .
\end{align*}
Note that since the coefficient of $m_0$ in $\ga_{1\infty}$ is $\frac{1-e^{-\tpi c_{\infty}}}{e^{\tpi c_1}-1}$, 
that in $\ga_{1\infty}^{\vee}$ is obtained by operating ${}^{\vee}$ to it.  
\begin{figure}
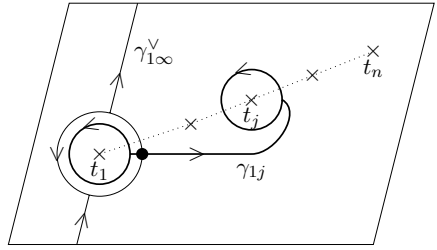

  \centering
  \scalebox{0.8}{\RWpictureI}
  \caption{The twisted cycles $\ga_{1j}$ and $\ga_{1\infty}^{\vee}$ 
    ($\bullet$ is the intersection point.)}
  \label{fig:homology-intersection-1j-1infty}
\end{figure}

Now, we compute $I_h([\ga_{10}],[\ga_{1\infty}^{\vee}])$. 
As in Figure \ref{fig:homology-intersection-10-1infty}, 
there are two intersection points $m_0 \cap l_{\infty}$ ($\bullet$) and $m_3 \cap l_{\infty}$ ($\circ$).  
Thus, the intersection number is computed as 
\begin{align*}
  I_h ([\ga_{10}] ,[\ga_{1\infty}^{\vee}]) 
  &=\frac{1-e^{\tpi c_0}}{e^{\tpi c_1}-1} \cdot (-1) \cdot e^{\tpi c_1}
    +\frac{e^{\tpi c_1}-e^{\tpi c_0}}{e^{\tpi c_1}-1} \cdot 1 \cdot e^{\tpi c_{\infty}} \\
  &=\frac{e^{\tpi c_1}-e^{\tpi (c_0+c_1)}-e^{\tpi (c_1 +c_{\infty})}+e^{\tpi (c_0+c_{\infty})}}{1-e^{\tpi c_1}}. 
\end{align*}
\begin{figure}
  \centering
  \scalebox{0.8}{\RWpictureJ}
  \caption{The twisted cycles $\ga_{10}$ and $\ga_{1\infty}^{\vee}$
    ($\bullet$ and $\circ$ are the intersection points.)}
  \label{fig:homology-intersection-10-1infty}
\end{figure}

\subsubsection{Proposition \ref{prop:homology-int-diag-0infty}}\label{subsubsec:homology-intersection-0-infty}
We consider the twisted cycles $\ga_{j0}$ and $\ga_{j\infty}$. 
If $j\neq k$, then it is obvious that $\ga_{j0}$ and $\ga_{k0}^{\vee}$ (resp. $\ga_{j\infty}$ and $\ga_{k\infty}^{\vee}$)
do not intersect topologically, and hence we obtain 
$I_h ([\ga_{j0}] ,[\ga_{k0}^{\vee}])=0$ (resp. $I_h ([\ga_{j\infty}] ,[\ga_{k\infty}^{\vee}])=0$). 

We now compute the self-intersection numbers. 
By Figures \ref{fig:homology-intersection-j0-j0} and \ref{fig:homology-intersection-jinfty-jinfty}, 
we obtain 
\begin{align*}
  I_h ([\ga_{j0}] ,[\ga_{j0}^{\vee}])
  &=\frac{1-e^{\tpi (c_0-c_1-\cdots -c_{j-1})}}{e^{\tpi c_j}-1} \cdot (-1) \cdot 1 \\
  &\qquad 
    +\frac{e^{\tpi c_j}-e^{\tpi (c_0-c_1-\cdots -c_{j-1}+c_j)}}{e^{\tpi c_j}-1} \cdot 1 
    \cdot e^{-\tpi c_0} e^{\tpi (c_1+\cdots +c_{j-1})} \\
  &=-\frac{(e^{\tpi c_0}-e^{\tpi (c_1+\cdots +c_{j-1})})(e^{\tpi c_0}-e^{\tpi (c_1+\cdots +c_j)})}
    {e^{\tpi (c_0+c_1+\cdots +c_{j-1})}(1-e^{\tpi c_j})}, 
\end{align*}
and 
\begin{align*}
  I_h ([\ga_{j\infty}] ,[\ga_{j\infty}^{\vee}])
  &=\frac{1-e^{\tpi (-c_{\infty}+c_1+\cdots +c_{j-1})}}{e^{\tpi c_j}-1} \cdot (-1) \cdot e^{\tpi c_j} \\
  &\qquad 
    +\frac{1-e^{\tpi (-c_{\infty}+c_1+\cdots +c_{j-1})}}{e^{\tpi c_j}-1}\cdot 1 \cdot 
    e^{\tpi (c_{\infty}-c_1-\cdots -c_{j-1})} \\
  &=-\frac{(e^{\tpi c_{\infty}}-e^{\tpi (c_1+\cdots +c_{j-1})})(e^{\tpi c_{\infty}}
    -e^{\tpi (c_1+\cdots +c_{j})})}{e^{\tpi (c_{\infty}+c_1+\cdots +c_{j-1})}(1-e^{\tpi c_j})}.
\end{align*}
\begin{figure}
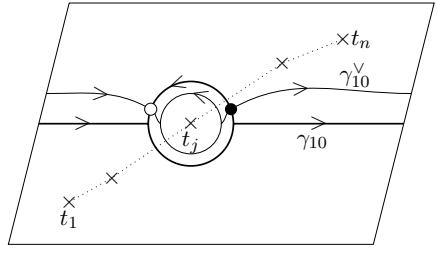

  \centering
  \scalebox{0.8}{\RWpictureK}  
  \caption{The twisted cycles $\ga_{j0}$ and $\ga_{j0}^{\vee}$
    ($\bullet$ and $\circ$ is the intersection points.)}
  \label{fig:homology-intersection-j0-j0}
\end{figure}
\begin{figure}
  \centering
  \scalebox{0.8}{\RWpictureL}
  \caption{The twisted cycles $\ga_{j\infty}$ and $\ga_{j\infty}^{\vee}$
    ($\bullet$ and $\circ$ are the intersection points.)}
  \label{fig:homology-intersection-jinfty-jinfty}
\end{figure}

\subsubsection{Proposition \ref{prop:homology-int-diag-1n}}
We compute the intersection numbers
$I_h ([\ga_{1j}] ,[\ga_{nk}^{\vee}])$ ($j=2,\dots ,n-1 ,0,\infty$; $k=2,\dots ,n-1 ,\infty,0$). 
It is clear that the intersection matrix becomes diagonal.
% since the cycles do not intersect topologically. 
The non-zero intersection numbers are computed as follows: 
\begin{align*}
  I_h ([\ga_{1j}] ,[\ga_{nj}^{\vee}])
  &=-\frac{1}{e^{\tpi c_j}-1} \cdot 1 \cdot e^{-\tpi (c_{j+1}+\cdots +c_n)}
    =\frac{e^{\tpi (c_1+\cdots +c_j)}}{1-e^{\tpi c_j}} ,\\
  I_h ([\ga_{10}] ,[\ga_{n\infty}^{\vee}])
  &=1 \cdot 1 \cdot e^{\tpi (c_n+c_{\infty})}
    =e^{\tpi (c_n+c_{\infty})} ,\\ 
  I_h ([\ga_{1\infty}] ,[\ga_{n0}^{\vee}])
  &=1\cdot (-1) \cdot e^{-\tpi c_0}
    =-e^{-\tpi c_0}, 
\end{align*}
where $j=2,\dots ,n-1$.

\subsubsection{Some intersection matrices}\label{subsubsec:homology-int-matrices}
To obtain explicit formulas of connection matrices 
in \S \ref{subsubsec:connection+1} and \S \ref{subsubsec:connection+tau}, 
we need four intersection matrices 
$H_{10}=(I_h([\ga_{1j}] ,[\ga_{k0}^{\vee}]))$,
$H_{0n}=(I_h([\ga_{j0}] ,[\ga_{nk}^{\vee}]))$,
$H_{1\infty}=(I_h([\ga_{1j}] ,[\ga_{k\infty}^{\vee}]))$, and
$H_{0\infty}=(I_h([\ga_{j\infty}] ,[\ga_{nk}^{\vee}]))$.
The entries of them are computed similarly as above (in fact, some entries have been computed). 
We list them. 
In the following formulas, $j,k$ belong to $\{ 1,\dots ,n \}$ or its subset. 
\begin{itemize}
\item $H_{10}$: 
\begin{align*}
  &I_h([\ga_{1\infty}] ,[\ga_{k0}^{\vee}])=  
    \begin{cases}
      \frac{1-e^{-\tpi c_{\infty}}-e^{-\tpi c_0}+e^{\tpi (-c_0+c_1-c_{\infty})}}{1-e^{\tpi c_1}} & (k=1)\\
      -e^{-\tpi c_0} & (k=2,\dots ,n),
    \end{cases}
    \\
  &I_h([\ga_{10}] ,[\ga_{k0}^{\vee}])=
    \begin{cases}
      -\frac{(e^{\tpi c_0}-1)(e^{\tpi c_0}-e^{\tpi c_1})}{e^{\tpi c_0}(1-e^{\tpi c_1})}&(k=1) \\
      0&(k=2,\dots ,n),
    \end{cases}
    \\
  &I_h([\ga_{1j}],[\ga_{k0}^{\vee}])=  
    \begin{cases}
      -1  & (1\neq k <j) \\
      -\frac{1-e^{\tpi (-c_0+c_1+\dots +c_j)}}{1-e^{\tpi c_j}}& (j=k (>1)) \\
      0 & (k>j) \\
      \frac{e^{\tpi c_1}(1-e^{-\tpi c_0})}{1-e^{\tpi c_1}} & (k=1) .
    \end{cases}
\end{align*}
\item $H_{0n}$: 
\begin{align*}
    &I_h([\ga_{j0}] ,[\ga_{n\infty}^{\vee}])=
  \begin{cases}
    e^{\tpi (c_n +c_{\infty})}& (j=1,2,\dots ,n-1)\\
    \frac{e^{\tpi c_n}(1-e^{\tpi (c_0+c_n)}-e^{\tpi (c_n+c_{\infty})}
      +e^{\tpi (c_0+c_n+c_{\infty})})}{1-e^{\tpi c_n}}& (j=n),
  \end{cases} \\
  &I_h([\ga_{j0}] ,[\ga_{n0}^{\vee}])=
  \begin{cases}
    0& (j=1,2,\dots ,n-1)\\
    -\frac{(e^{\tpi (c_0+c_n)}-1)(e^{\tpi c_0}-1)}{e^{\tpi c_0}(1-e^{\tpi c_n})}& (j=n),
  \end{cases} \\
  &I_h([\ga_{j0}] ,[\ga_{nk}^{\vee}])=
    \begin{cases}
      -e^{\tpi c_0}  & (k<j\neq n) \\
      \frac{e^{\tpi c_j}(e^{\tpi c_0}-e^{\tpi (c_1+\cdots +c_{j-1})})}{1-e^{\tpi c_j}}& (k=j (< n)) \\
      0 & (k>j) \\
      \frac{1-e^{\tpi c_0}}{1-e^{\tpi c_n}}  &(j=n).
    \end{cases}
\end{align*}
\item $H_{1\infty}$:
\begin{align*}
  &I_h([\ga_{1\infty}] ,[\ga_{k\infty}^{\vee}])=
    \begin{cases}
       -\frac{(e^{\tpi c_{\infty}}-1)(e^{\tpi c_{\infty}}-e^{\tpi c_1})}{e^{\tpi c_{\infty}}(1-e^{\tpi c_1})} &(k=1) \\
       0 &(k=2,\dots ,n),
    \end{cases}
    \\
  &I_h([\ga_{10}] ,[\ga_{k\infty}^{\vee}])=
    \begin{cases}
      \frac{e^{\tpi c_1}-e^{\tpi (c_0+c_1)}-e^{\tpi (c_1 +c_{\infty})}+e^{\tpi (c_0+c_{\infty})}}{1-e^{\tpi c_1}} &(k=1)\\
      e^{\tpi (c_{\infty}-c_1-\cdots -c_{k-1})} & (k=2,\dots ,n),
    \end{cases}
    \\
  &I_h([\ga_{1j}] ,[\ga_{k\infty}^{\vee}])=
    \begin{cases}
      e^{\tpi (c_{\infty}-c_1-\dots -c_{k-1})}& (1\neq k <j) \\
      \frac{e^{\tpi (c_{\infty}-c_1-\cdots -c_{j-1})}-e^{\tpi c_j}}{1-e^{\tpi c_j}} & (j=k(> 1)) \\
      0 & (k>j) \\
      \frac{e^{\tpi c_1}(1-e^{\tpi c_{\infty}})}{1-e^{\tpi c_1}} & (k=1).
    \end{cases}
\end{align*}
\item $H_{\infty n}$: 
\begin{align*}
  &I_h([\ga_{j\infty}] ,[\ga_{n\infty}^{\vee}])=
  \begin{cases}
    0& (j=1,2,\dots ,n-1)\\
    -\frac{(e^{\tpi (c_{\infty}+c_n)}-1)(e^{\tpi c_{\infty}}-1)}{e^{\tpi c_{\infty}}(1-e^{\tpi c_n})}& (j=n),
  \end{cases} \\
  &I_h([\ga_{j\infty}] ,[\ga_{n0}^{\vee}])=
  \begin{cases}
    -e^{\tpi (-c_0+c_1+\cdots +c_{j-1})}& (j=1,2,\dots ,n-1)\\
    \frac{1-e^{-\tpi (c_{\infty}+c_n)}-e^{-\tpi (c_0+c_n)}+e^{-\tpi (c_0+c_n+c_{\infty})}}{1-e^{\tpi c_n}}& (j=n),
  \end{cases} \\
  &I_h([\ga_{j\infty}] ,[\ga_{nk}^{\vee}])=
    \begin{cases}
      e^{\tpi (c_1+\cdots +c_{j-1})}  & (k<j\neq n) \\
      -\frac{1-e^{\tpi (-c_{\infty}+c_1+\cdots +c_{j-1})}}{e^{\tpi (c_{j+1}+\cdots +c_n)}(1-e^{\tpi c_j})}& (k=j (<n)) \\
      0 & (k>j) \\
      \frac{1-e^{-\tpi c_{\infty}}}{e^{\tpi c_n}(1-e^{\tpi c_n})}  &(j=n).
    \end{cases}
\end{align*}
\end{itemize}

\subsection{Intersection numbers of twisted cocycles}\label{subsec:eval-cohomology-intersection}
% We compute the intersection number $I_c([\vph], [\vph'])$ on twisted cohomology groups. 
Thanks to the formula (\ref{eq:intersecion-residue}), 
we can obtain the intersection number $I_c([\vph], [\vph'])$ by
\begin{itemize}
\item finding a Laurent series solution $f_l$ to the equation $\na f_l =\vph$ around $u=t_l$, and 
\item evaluating the residue $\Res_{u=t_l} (f_l \vph')$, 
\end{itemize}
for each $l=1,\dots ,n$. 
If it is clear that $f_l \vph'$ is holomorphic around $u=t_l$, 
an explicit form of $f_l$ is not needed. 

We first give some computation in a general setting. 
The logarithmic $1$-form $\om$ has a Laurent series expansion 
$\om /du =\sum_{m=-1}^{\infty} \al_m^{(l)} (u-t_l)^m$, where
\begin{align*}
  \al_{-1}^{(l)}=c_l ,\quad 
  \al_0^{(l)} =\tpi c_0 +\sum_{k\neq l} c_k \rho(t_l-t_k) ,\quad 
  \al_1^{(l)}=\frac{c_l}{3}\frac{\vth_1'''(0)}{\vth_1'(0)}
  +\sum_{k \neq l} c_k \rho'(t_l-t_k),\dots .
\end{align*}
We assume that $\vph$ has a Laurent expansion\footnote{
  Though $a_m$ should be written as, for example, $a_m^{(l)}$,
  we use this notation for simplicity. 
}
\begin{align*}
  \frac{\vph}{du}=\frac{a_{-2}}{(u-t_l)^2}+\frac{a_{-1}}{u-t_l} +a_0 +a_1 (u-t_l)+\cdots
\end{align*}
(in this paper, the differential forms that we consider have poles of order at most $2$). 
We set $f_l =\sum_{m=-1}^{\infty} b_m (u-t_l)^m$ and determine the coefficients $b_m$'s 
so that $\na f_l =\vph$ holds. 
By straightforward calculation, we have 
\begin{align*}
  b_{-1}
  &=\frac{a_{-2}}{\al_{-1}^{(l)}-1}
    =\frac{a_{-2}}{c_l-1},\\
  b_0
  &=\frac{a_{-1}-\al_0^{(l)} b_{-1}}{\al_{-1}^{(l)}}
    =\frac{a_{-1}}{c_l}-\frac{a_{-2} \al_0^{(l)}}{c_l(c_l-1)},\\
  b_1
  &=\frac{a_0-\al_1^{(l)} b_{-1}-\al_0^{(l)} b_0}{\al_{-1}^{(l)}+1}
    =\frac{a_0}{c_l+1}-\frac{a_{-1} \al_0^{(l)}}{c_l(c_l+1)}
    -\frac{a_{-2} (c_l \al_1^{(l)} -(\al_0^{(l)})^2)}{c_l(c_l-1)(c_l+1)},
\end{align*}
and so on. 
Especially, we can see $\ord_{t_l}(f_l)=\ord_{t_l}(\vph)+1$, and hence we obtain the following lemma. 
\begin{Lem}\label{lem:order-residue=0}
  If $\ord_{t_l}(\vph) +\ord_{t_l}(\vph')\geq -1$, then $\Res_{u=t_l}(f_l \vph')=0$. 
\end{Lem}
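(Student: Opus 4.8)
The plan is to prove the lemma by a direct count of orders of vanishing at $t_l$, using the key fact — already extracted from the recursion for the coefficients $b_m$ in the preceding discussion — that solving $\na f_l = \vph$ raises the order by exactly one, namely $\ord_{t_l}(f_l) = \ord_{t_l}(\vph) + 1$.

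First I would recall why this order shift holds, since it is the only nontrivial ingredient. Writing $\vph/du = \sum_{m\geq M} a_m (u-t_l)^m$ with $a_M \neq 0$, so that $M = \ord_{t_l}(\vph)$, and seeking $f_l = \sum_{m\geq N} b_m (u-t_l)^m$, the lowest-order contribution to $\na f_l = df_l + f_l \om$ comes from differentiating $f_l$ and from the polar coefficient $\al_{-1}^{(l)} = c_l$ of $\om/du$. Matching the coefficient of $(u-t_l)^{N-1}$ forces $N-1 = M$ together with $(N + c_l) b_N = a_M$; since $c_l \in \C - \Z$, the factor $N + c_l = M + 1 + c_l$ is nonzero for every integer $M$, so $b_N \neq 0$ and $N = M + 1$. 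This is exactly the claimed relation.

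Next I would invoke multiplicativity of the order. By definition, $\Res_{u=t_l}(f_l \vph')$ is the coefficient of $(u-t_l)^{-1}$ in the Laurent series $f_l \cdot (\vph'/du)$, and since the order is a valuation on $\C((u-t_l))$ it is additive on products. Hence
\[
  \ord_{t_l}(f_l \vph') = \ord_{t_l}(f_l) + \ord_{t_l}(\vph') = \ord_{t_l}(\vph) + \ord_{t_l}(\vph') + 1 \geq 0,
\]
where the last inequality uses the hypothesis $\ord_{t_l}(\vph) + \ord_{t_l}(\vph') \geq -1$. Therefore $f_l \cdot (\vph'/du)$ is holomorphic at $t_l$ and has no $(u-t_l)^{-1}$ term, so $\Res_{u=t_l}(f_l \vph') = 0$.

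There is no serious obstacle here; the argument is pure bookkeeping of orders once the shift $\ord_{t_l}(f_l) = \ord_{t_l}(\vph) + 1$ is in hand. The one point I would state with care is that this clean shift depends on the standing assumption $c_l \notin \Z$, which is precisely what guarantees the leading equation $(M + 1 + c_l) b_{M+1} = a_M$ is solvable with a nonzero leading coefficient; without it the order of $f_l$ could fail to increase by exactly one and the residue need not vanish.
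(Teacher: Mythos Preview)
Your proof is correct and follows essentially the same approach as the paper: the paper derives the order shift $\ord_{t_l}(f_l)=\ord_{t_l}(\vph)+1$ from the explicit recursion for the coefficients $b_m$ and then states the lemma as an immediate consequence, exactly as you do via additivity of the valuation. Your write-up simply makes the order-counting step explicit and notes the role of the hypothesis $c_l\notin\Z$, which the paper leaves implicit.
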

% By using these expressions, we compute the intersection numbers. 

\subsubsection{Theorem \ref{th:cohomology-intersection-1}}
% \label{subsubsec:cohomology-intersetion-lambda-not0}
First, we compute the intersection number $I_c ([\psi_j],[\psi_k^{\vee}])$. 
The differential form $\psi_j=\frs (u-t_j ;\la)du$ has a pole at $u=t_j$ with residue $1$, and 
it is holomorphic around $u=t_l$ if $l\neq j$. 
Thus, 
% by the above discussion, we can say $f_l \psi_k$ is holomorphic around $u=t_l$ unless $l=j$, 
% and we have 
Lemma \ref{lem:order-residue=0} shows 
$I_c ([\psi_j],[\psi_k^{\vee}])=0$ for $k\neq j$ and 
% $I_c ([\psi_j],[\psi_j^{\vee}])=\tpi \Res_{u=t_j}(f_j \psi_j)$.
% Since $f_j$ is holomorphic around $u=t_j$, we have 
% We also have
\begin{align*}
  I_c ([\psi_j],[\psi_j^{\vee}])
  =\tpi \Res_{u=t_j}(f_j \psi_j^{\vee})
  = \tpi \Res_{u=t_j}\left(
  \big( \frac{1}{c_j}+\cdots \big) \cdot \big( \frac{1}{u-t_j}+\cdots \big)du
  \right)
  =\frac{\tpi}{c_j}. 
\end{align*}

\subsubsection{Proposition \ref{prop:cohomology-intersection-2}}
Next, we consider $I_c ([\vph^{(pq)}_{j}], [\eta^{(pq)\vee}_{k}])$. 
% To show $I_c ([\vph^{(pq)}_{j}], [\eta^{(pq)\vee}_{k}])=0$ for $j\neq k$, 
% we note that $\{\eta^{(pq)}_j\}_j$ satisfies the following properties: 
% \begin{itemize}
% \item $\eta^{(pq)}_k=\psi_k -\frac{\frs (t_p-t_l;\la)}{\frs (t_p-t_q;\la)}\psi_q$ $(k\neq p,q)$
%   is holomorphic around $u=t_l$ if $l\neq k,q$, and has 
%   zero (resp. pole) of order $1$ at $u=t_p$ (resp. $u=t_k$). 
% \item $\eta^{(pq)}_q=\psi_q$
%   is holomophic around $u=t_l$ if $l\neq q$, and has pole of order $1$ at $u=t_q$.
% \item $\eta^{(pq)}_p=\psi_p+\frac{1}{\frs(t_p-t_q:\la)}\Big( 
%   \frac{1}{c_p}\Big(\tpi c_0 +\sum_{k\neq p} c_k \rho(t_p -t_k) \Big) -\rho(-\la) \Big)\psi_q$
%   is holomorphic around $u=t_l$ if $l\neq p,q$, and has 
%   poles of order $1$ at $u=t_p,t_q$. 
% \end{itemize}
Since we have 
\begin{align*}
  \ord_{t_l} (\vph^{(pq)}_q)
  \begin{cases}
    \geq 0 & (l\neq p) \\
    =-2 & (l=p),
  \end{cases}
  \quad 
  \ord_{t_l} (\vph^{(pq)}_j)
  \begin{cases}
    \geq 0 & (l\neq j) \\
    =-1 & (l=j)
  \end{cases}
  \quad (j\neq q),
\end{align*}
and 
\begin{align*}
  &\ord_{t_l} (\eta^{(pq)}_p)
  \begin{cases}
    \geq 0 & (l\neq p,q) \\
    =-1 & (l=p,q),
  \end{cases}
  \quad 
  \ord_{t_l} (\eta^{(pq)}_q)
  \begin{cases}
    \geq 0 & (l\neq q) \\
    =-1 & (l=q),
  \end{cases}\\
  &\ord_{t_l} (\eta^{(pq)}_k)
  \begin{cases}
    \geq 0 & (l\neq k,q) \\
    =1 & (l=p) \\
    =-1 & (l=k,q)
  \end{cases}
  \quad (k\neq p,q), 
\end{align*}
the property 
$I_c ([\vph^{(pq)}_{j}], [\eta^{(pq)\vee}_{k}])=0$ for $j\neq k$ 
follows from Lemma \ref{lem:order-residue=0}, except for $(j,k)=(q,p)$. 
To compute $I_c ([\vph^{(pq)}_{q}], [\eta^{(pq)\vee}_{p}])$, we solve $\na f_p =\vph^{(pq)}_{q}$ around $u=t_p$. 
By considering the termwise differentiation of the Laurent series of $\frs(u-t_p;\la)$, we have 
\begin{align*}
  \frac{\vph^{(pq)}_{q}}{du}=\frac{\pa \frs}{\pa u}(u-t_p ;\la)
  =\frac{-1}{(u-t_p)^2} +(\text{constant}) +\cdots ,
\end{align*}
and hence the Laurent expansion of $f_p$ has the form of 
\begin{align*}
  \frac{-1}{c_p -1} \cdot \frac{1}{u-t_p}
  -\frac{(-1) \cdot \al_0^{(p)}}{c_p(c_p-1)}  +\cdots
  =\frac{-1}{c_p -1} \cdot \frac{1}{u-t_p}
  +\frac{\al_0^{(p)}}{c_p(c_p-1)} +\cdots .
\end{align*}
On the other hand, the Laurent expansion of $\eta^{(pq)}_{p}$ is 
\begin{align*}
  \frac{\eta^{(pq)}_{p}}{du} 
  &=\frs(u-t_p;\la)+\frac{1}{\frs(t_p-t_q)}\Big( \frac{\al_0^{(p)}}{c_p} -\rho(-\la) \Big) \frs(u-t_q;\la) \\
  &=\Big( \frac{1}{u-t_p} +\rho(-\la) +\cdots \Big) 
    +\Big( \Big( \frac{\al_0^{(p)}}{c_p} -\rho(-\la)\Big) +\cdots \Big) 
  =\frac{1}{u-t_p} + \frac{\al_0^{(p)}}{c_p} +\cdots .
\end{align*}
Therefore, the intersection number $I_c ([\vph^{(pq)}_{q}], [\eta^{(pq)\vee}_{p}])$ is 
\begin{align*}
  I_c ([\vph^{(pq)}_{q}], [\eta^{(pq)\vee}_{p}])
  &=\tpi \Res_{u=t_p} (f_p \eta^{(pq)\vee}_{p}) \\
  &=\tpi \left( \frac{-1}{c_p -1} \cdot \Big(\frac{\al_0^{(p)}}{c_p}\Big)^{\vee} 
    +\frac{\al_0^{(p)}}{c_p(c_p-1)} \cdot 1\right) 
    =0 ,
\end{align*}
because of the property $(\al_0^{(p)})^{\vee}=-\al_0^{(p)}$. 

Let us compute $I_c ([\vph^{(pq)}_{j}], [\eta^{(pq)\vee}_{j}])$. 
The intersection numbers for $j\neq q$ are obtained by Theorem \ref{th:cohomology-intersection-1}, 
and $I_c ([\vph^{(pq)}_{q}], [\eta^{(pq)\vee}_{q}])$ is computed as follows: 
\begin{align*}
  &I_c ([\vph^{(pq)}_{q}], [\eta^{(pq)\vee}_{q}])
  =\tpi \Res_{u=t_p} (f_p \eta^{(pq)\vee}_{q}) \\
  &=\tpi \Res_{u=t_p} \left( \Big( \frac{-1}{c_p -1} \cdot \frac{1}{u-t_p}
    +\frac{\al_0^{(p)}}{c_p(c_p-1)} +\cdots \Big)
    \cdot \Big( \frs(t_p-t_q ;-\la) +\cdots \Big) du\right)\\
  &=-\tpi \cdot \frac{\frs(t_p-t_q;-\la)}{c_p-1}.
\end{align*}

% \subsubsection{Some intersection matrices with $\la \neq 0$}\label{subsubsec:cohomology-int-matrices}
% To obtain explicit formulas of a matrix in Corollary \ref{cor:rep-mat-contiguity}, 
% we need an explicit form of the intersection matrix 
% $C_{\psi \eta}=I_c([\psi_j],[\eta_k^{(pq)\vee}])_{j,k=1,\dots,n}$. 

\subsubsection{Theorem \ref{th:cohomology-intersection-0}}
Finally, we compute the intersection numbers when $\la =0$. 
Except for 
$I_c ([\vph_{ii}],[\vph_{ij}^{\vee}])$ ($j=0,1,\dots ,n$), 
the intersection numbers in Theorem \ref{th:cohomology-intersection-0} can be obtained 
similarly to the above discussion. 
% in \S \ref{subsubsec:cohomology-intersetion-lambda-not0}. 
By Lemma \ref{lem:order-residue=0}, we have 
$I_c ([\vph_{ii}],[\vph_{ij}^{\vee}]) =\tpi \Res_{u=t_i} (f_i \vph_{ij}^{\vee})$, 
where $f_i$ is a solution to $\na f_i =\vph_{ii}$ around $u=t_i$. 
% Since $\rho'(u)$ is an even function, 
Since we have 
\begin{align}\label{eq:Laurent-exp-vph_ii}
  \frac{\vph_{ii}}{du}=\rho'(u-t_i)
  =-\frac{1}{(u-t_i)^2} +\frac{\vth_1'''(0)}{3 \vth_1'(0)} +\cdots ,
\end{align}
the Laurent expansion of $f_i$ has the form of 
\begin{align*}
  &\frac{-1}{c_i -1} \cdot \frac{1}{u-t_i}
  -\frac{(-1) \cdot \al_0^{(i)}}{c_i(c_i-1)}
  +\Bigg( \frac{\frac{\vth_1'''(0)}{3 \vth_1'(0)}}{c_i+1} 
  -\frac{(-1)\cdot (c_i \al_1^{(i)} -(\al_0^{(i)})^2)}{c_i(c_i-1)(c_i+1)}\Bigg)(u-t_i)+\cdots \\
  &=\frac{-1}{c_i -1} \cdot \frac{1}{u-t_i}
  +\frac{\al_0^{(i)}}{c_i(c_i-1)}
  +\frac{1}{(c_i-1)(c_i+1)}\Bigg( (c_i -1)\frac{\vth_1'''(0)}{3 \vth_1'(0)}
  +\al_1^{(i)}-\frac{(\al_0^{(i)})^2}{c_i}\Bigg)(u-t_i)+\cdots .
\end{align*}
Therefore, by (\ref{eq:Laurent-exp-vph_ii}) and
\begin{align*}
  \vph_{i0} =du ,\qquad 
  \vph_{ij} =\left( -\frac{1}{u-t_i} +\rho(t_i-t_j) +\cdots \right) du \quad (j\neq 0,i), 
\end{align*}
the intersection numbers are computed as follows: 
\begin{align*}
  \frac{I_c ([\vph_{ii}],[\vph_{i0}^{\vee}])}{\tpi} 
  &=\frac{-1}{c_i -1},
  \\
  \frac{I_c ([\vph_{ii}],[\vph_{ii}^{\vee}])}{\tpi} 
  &=\frac{1}{(c_i-1)(c_i+1)}\Bigg( (c_i -1)\frac{\vth_1'''(0)}{3 \vth_1'(0)}
    +\al_1^{(i)}-\frac{(\al_0^{(i)})^2}{c_i}\Bigg) \cdot (-1)
    +\frac{-1}{c_i -1} \cdot \frac{\vth_1'''(0)}{3 \vth_1'(0)}
  \\
  &=\frac{1}{(c_i-1)(c_i+1)} \Bigg( 
    \frac{1}{c_i}\Big( \tpi c_0 +\sum_{k\neq i} c_k \rho(t_i-t_k) \Big)^2 
    -c_i\frac{\vth'''_{1}(0)}{\vth'_{1}(0)} 
    -\sum_{k\neq i} c_k \rho'(t_i-t_k)
    \Bigg), \\
  \frac{I_c ([\vph_{ii}],[\vph_{ij}^{\vee}])}{\tpi}
  &=\frac{\al_0^{(i)}}{c_i(c_i-1)} \cdot (-1)
    +\frac{-1}{c_i -1} \cdot \rho(t_i-t_j)
  \\
  &=-\frac{1}{c_i(c_i-1)}\Big( \tpi c_0 +\sum_{k\neq i} c_k \rho(t_i -t_k) 
    +c_i \rho(t_i -t_j) \Big), 
\end{align*}
where $j\neq 0,i$.

\begin{Ack}
  The author is grateful to Professors Toshiyuki Mano and Humihiko Watanabe 
  for their helpful advice. 
  This work was supported by JSPS KAKENHI Grant Number JP20K14276.
\end{Ack}


\begin{thebibliography}{99}
\bibitem{AK}
  K. Aomoto and M. Kita, 
  ``Theory of Hypergeometric Functions'', 
  translated by K. Iohara, 
  Springer Monographs in Mathematics,
  Springer-Verlag, Tokyo, 2011. 

\bibitem{CM}
  K. Cho and K. Matsumoto, 
  Intersection theory for twisted cohomologies and 
  twisted Riemann's period relations. I, 
  \textit{Nagoya Math. J.}, \textbf{139} (1995), 67--86.

\bibitem{Forster}
  O. Forster, 
  ``Lectures on Riemann surfaces'', 
  translated by B. Gilligan, 
  Graduate Texts in Mathematics, 81. Springer-Verlag, New York-Berlin, 1981. 

\bibitem{Ghazouani-Pirio}
  S. Ghazouani and L. Pirio, 
  Moduli spaces of flat tori and elliptic hypergeometric functions, 
  \textit{M\'{e}m. Soc. Math. Fr. (N.S.)}, No. 164 (2020). 
  
\bibitem{G-Matsubara}
  Y. Goto and S.-J. Matsubara-Heo, 
  Homology and cohomology intersection numbers of GKZ systems, 
  \textit{Indag. Math. (N.S.)}, \textbf{33} (2022), 546--580. 

\bibitem{GM-Pfaffian-contiguity}
  Y. Goto and K. Matsumoto, 
  Pfaffian equations and contiguity relations
  of the hypergeometric function of type
  $(k+1, k+n+2)$ and their applications, 
  \textit{ Funkcial. Ekvac.}, \textbf{61} (2018), 315--347. 

\bibitem{Ito2009}
  K-K. Ito, 
  The elliptic hypergeometric functions associated to the configuration space 
  of points on an elliptic curve I : Twisted cycles, 
  \textit{J. Math. Kyoto Univ.}, \textbf{49} (2009), no. 4, 719--733. 

\bibitem{Ito2010}
  K-K. Ito, 
  Twisted Poincar\'{e} lemma and twisted \v{C}ech-de Rham isomorphism in case dimension $=1$, 
  \textit{Kyoto J. Math.}, \textbf{50} (2010), no. 1, 193--204. 

\bibitem{KY}
  M. Kita and M. Yoshida, 
  Intersection theory for twisted cycles,   
  \textit{Math. Nachr.}, \textbf{166} (1994), 287--304. 

\bibitem{Mano}
  T. Mano, 
  The Riemann-Wirtinger Integral and Monodromy-Preserving Deformation on Elliptic Curves, 
  \textit{Int. Math. Res. Not. IMRN}, \textbf{2008}, Art. ID rnn110, 19 pp. 

\bibitem{M-k-form}
  K. Matsumoto, 
  Intersection numbers for logarithmic $k$-forms, 
  \emph{Osaka J. Math.}, \textbf{35} (1998), 873--893. 

\bibitem{M-FD} 
  K. Matsumoto, 
  Monodromy and Pfaffian of Lauricella's $F_D$ 
  in terms of the intersection forms of twisted (co)homology groups, 
  \textit{Kyushu J. Math.}, \textbf{67} (2013), no. 2, 367--387. 

\bibitem{M-MA2019} 
  K. Matsumoto, 
  Introduction to the Intersection Theory for Twisted Homology and Cohomology Groups, 
  Proceedings of Science, MathemAmplitudes 2019: Intersection Theory \& Feynman Integrals. 

\bibitem{Mano-Watanabe}
  T. Mano and H. Watanabe, 
  Twisted cohomology and homology groups associated to the Riemann-Wirtinger integral,
  \textit{Proc. Amer. Math. Soc.}, \textbf{140} (2012), no. 11, 3867--3881.  

\bibitem{Mimachi2011}
  K. Mimachi, 
  Intersection numbers for twisted cycles and the connection problem 
  associated with the generalized hypergeometric function ${}_{n+1}F_n$, 
  \textit{Int. Math. Res. Not. IMRN}, \textbf{2011}, no. 8, 1757--1781. 

\bibitem{Mumford}
  D. Mumford, 
  ``Tata lectures on theta. I'',  
  Progress in Mathematics, 28. Birkhäuser Boston, Inc., Boston, MA, 1983. 

\bibitem{Watanabe-elliptic-homology-cohomology}
    H. Watanabe, 
    Twisted homology and cohomology groups associated to the Wirtinger integral, 
    \textit{J. Math. Soc. Japan}, \textbf{59} (2007), no. 4, 1067--1080. 

\bibitem{Watanabe-wirtinger-diff-eq}
  H. Watanabe, 
  Linear differential relations satisfied by Wirtinger integrals, 
  \textit{Hokkaido Math. J.}, \textbf{38} (2009), no. 1, 83--95. 

\end{thebibliography}
\end{document}